	\theoremstyle{plain}
		\newtheorem{mainthm}{\textsc{Theorem}}		
						\newtheorem{thm}{Theorem}[section]
		\newtheorem{cor}[thm]{Corollary}	
		\newtheorem{lem}[thm]{Lemma}		
				\newtheorem{prop}[thm]{Proposition}
	\theoremstyle{definition}
		\newtheorem{defn}[thm]{Definition}					
			\theoremstyle{remark}
		\newtheorem{rem}[thm]{Remark}		
				\newtheorem{note}[thm]{Notation}
\numberwithin{equation}{section}	
\newcommand{\F}{\mathbb{F}}	
\newcommand{\K}{\mathbb{K}}	
\newcommand{\N}{\mathbb{N}}	
\newcommand{\R}{\mathbb{R}}	
\renewcommand{\S}{\mathbb{S}}
\newcommand{\C}{\mathbb{C}}		
\newcommand{\U}{\mathbb{U}}		
\newcommand{\Sp}{\mathrm{Sp}}
\newcommand{\diag}{\mathrm{diag}}
\newcommand{\X}{\mathcal{X}}
\newcommand{\Lin}{\mathscr{L}}
\newcommand{\norm}[1]{\left\| #1 \right\|}			
\newcommand{\abs}[1]{\left\lvert #1 \right\rvert}	
\newcommand{\trasp}[1]{{#1}^\mathsf{T}}			
\newcommand{\iMor}{\mathrm{n_-}}		
\newcommand{\coiMor}{\mathrm{n_+}}		
\newcommand{\Mat}{\mathrm{Mat}}
\newcommand{\traspinv}[1]{{#1}^\mathsf{-T}}
\newcommand{\Gscal}[1]{\left\langle{#1}\right\rangle_\Gamma}
\newcommand{\Gnorm}[1]{\norm{#1}_\Gamma}
			\renewcommand{\leq}{\leqslant}
\renewcommand{\geq}{\geqslant}
\renewcommand{\=}{\coloneqq}			
\newcommand{\email}[1]{\href{mailto:#1}{\textsf{#1}}}
\newcommand{\simbolovettore}[1]{{\boldsymbol{#1}}}
\newcommand{\vC}{\simbolovettore{C}}
\newcommand{\vc}{\simbolovettore{c}}
\newcommand{\vs}{\simbolovettore{s}}
\newcommand{\vu}{\simbolovettore{u}}
\newcommand{\vv}{\simbolovettore{v}}
\newcommand{\vw}{\simbolovettore{w}}
\newcommand{\vx}{\simbolovettore{x}}
\newcommand{\vz}{\simbolovettore{z}}
\newcommand{\vxi}{\simbolovettore{\xi}}
\newcommand{\veta}{\simbolovettore{\eta}}
\newcommand{\Id}{I}
\renewcommand{\X}{\mathcal{X}}               
\title{Morse index and stability of the planar N-vortex problem}
\author{Xijun Hu\thanks{Partially supported by NSFC(Nos. 11790271, 11425105)}, Alessandro Portaluri
\thanks{Partially supported by Prin 2015: Variational methods, with applications to problems in mathematical physics and geometry},  Qin Xing}
\date{\today}
\date{\today}
\begin{document}
 \maketitle

\begin{abstract}

This paper concerns the investigation of the stability properties of relative equilibria which are rigidly rotating vortex configurations sometimes called vortex crystals, in the N-vortex problem. Such a configurations can be characterized as critical point of the Hamiltonian function restricted on the constant angular impulse hypersurface in the phase space (topologically a pseudo-sphere whose coefficients are the circulation strengths of the vortices). Relative equilibria are generated by the circle action on the so-called shape pseudo-sphere (which generalize the standard shape sphere appearing in the study of the  N-body problem).  Inspired by the planar gravitational $N$-body problem,  and after a geometrical and dynamical discussion, we investigate the  relation intertwining the stability of relative equilibria and the inertia indices of the central configurations generating such  equilibria.  In the last section we apply our main results to some symmetric three and four vortices relative equilibria.  

\vskip0.2truecm
\noindent
\textbf{AMS Subject Classification:  70H14, 70F15,  37J25, 34D20, 37N10, 76B47}
\vskip0.1truecm
\noindent
\textbf{Keywords:} Relative equilibria, N-vortex problem, Spectral  stability, Morse (co)-index
\vskip0.5truecm 
\begin{center}
\large{\em{In memory of Florin Diacu}	}
\end{center}

\end{abstract}


\section{Introduction and description of the problem}\label{sec:intro}

The study of vortex dynamics can be traced back to Helmholtz’s work on hydrodynamics in 1858 \cite{Hel1858} and it plays an important role in the study of   superfluids, superconductivity, and stellar systems. Its Hamiltonian formulation could be dated back to Kirchhoff in the plane, and later on generalized by Routh in \cite{Rou1880}  and then Lim \cite{Lim43} to general domains in the plane. In this paper, we  are interested to the problem in the first order Hamiltonian system of the form
\begin{equation}\label{eq:original-equation-intro}
\Gamma_{i}\dot{\vz}_{i}(t)=J\nabla_{\vz_i}H(\vz(t)) \qquad  i \in 1, \dots, N.
\end{equation}
Here $J\= \begin{bmatrix}
0&1\\
 -1&0
 \end{bmatrix}$ is the standard symplectic matrix in the Euclidean plane. The Hamiltonian function $H$ is 
\[
H(\vz)= - \sum_{\substack{i,j=1\\i<j}}^N \Gamma_i \Gamma_j \log \abs{\vz_i-\vz_j}.
\]
Here $\Gamma_1, \dots \Gamma_N \in \R\setminus \{0\}$ are the {\em vorticities\/} or {\em vortex strengths\/}. The Hamiltonian it is defined on the configuration space 
\begin{equation}
\F_N(\R^2)\=\Set{\vz \in \R^{2N}|  \vz_i \neq \vz_j \textrm{ for } i \neq j}
\end{equation}
of the $N$ (coloured) points in the plane. It is clear by the definition that $H(\vz_1, \dots, \vz_N)$ becomes singular if $\abs{\vz_i-\vz_j}\to 0$ for some $i \neq j$. Setting  $
G(\vw_1, \vw_2)=-\log\abs{\vw_1-\vw_2}$  then the Hamiltonian can also be written as $H(\vz)=\sum_{i<j}\Gamma_i\Gamma_j G(\vz_i, \vz_j)$ and it is usually called hydrodynamic Green's function. As already observed, the Hamiltonian system  in Equation \eqref{eq:original-equation-intro} appear as singular limit equations in problems from physics. More precisely in fluid dynamics is derived from the Euler equation and for instance in superconductivity $H$ appears as renormalized energy for Ginzburg-Landau vortices. Concerning the existence and stability properties of periodic solutions of the N-vortex problem given in Equation \eqref{eq:original-equation-intro},  the literature is quite broad  and we refer the interested reader to \cite{New01} and references therein. Among the simplest periodic orbits of the planar $N$-vortex problem are the {\em relative equilibria\/}. These  configurations of vortices rotates rigidly about their center of vorticity and sometimes are referred to as {\em vortex crystals\/} and are frequently observed in natural phenomena (e.g. the hurricanes).

 Relative equilibria are crucial in deeply understanding the intricate dynamics of this singular Hamiltonian problem and as the name suggested are rest-points  in a suitable rotating coordinate system. As we'll discuss in Section \ref{sec:The geometrical and dynamical framework}, relative equilibria can be characterized as critical points (or more precisely critical orbits) of the restriction of the Hamiltonian to the angular impulse unitary (pseudo-)sphere of the phase space. Otherwise stated such a rigid configurations are generated through a rotation with angular velocity $\omega$ of a special critical configuration of the system usually called {\em central configuration\/}. (Cfr. Section \ref{sec:The geometrical and dynamical framework} for further details). 
  
    A natural and classical problem is to understand how the spectral properties of these central configurations or more precisely the inertia indices of the Hessian of $H$ at these configurations reflect on the dynamical properties of the generated vortex crystal (through rotation) like, for instance, spectral or linear stability properties etc. This problem is very classical in the gravitational $N$-body problem in which central configurations are characterized as critical points of the self-interacting potential on the shape sphere (which is the base space of the circle bundle) whose total space is the inertia ellipsoid. There is a long standing conjecture due to Moeckel stating that a linearly stable relative equilibrium must be a nondegenerate minimum of the Newtonian potential restricted to the shape sphere. The other direction is false even for other class of weakly attracting singular potentials. (Cfr., for instance, \cite{HLS14, BJP16}). 
    
       The investigation of the relation between the stability properties of a relative equilibrium and the spectral properties of the central configuration generating such an equilibrium in the N-vortex problem is pretty different. Despite of the fact that the circulation strengths could have any sign (in the classical gravitational $N$-body problem they correspond to the masses which are all positive), the Hessian of the Hamiltonian  computed at a central configuration has some commutativity properties with respect to the Poisson matrix $K$ induced by  $J$ that greatly simplify the problem. (Cfr. Lemma \ref{thm:property-of-H}, for further details). Such a property was  observed by  Roberts  in its interesting  paper of  Roberts  in \cite{Rob13}. In the aforementioned paper, in fact, the author was able among others to characterized in the case of positive circulation strengths, the linearly stable relative equilibria of the $N$-vortex problem as nondegenerate minima of the Hamiltonian $H$ restricted to the shape sphere. This result was the starting point of our analysis and motivated us to investigate what is the effect of mixed sign circulation strengths, which after all, are very common  in the applications. 
       
          Before describing our main results we start to observe that this indefinite case,  the stability analysis is much more delicate. This situation, as we'll try to clarify, reflects somehow the difficulties and it is the paradigm of the difference between the Riemannian and the Lorentzian world.

    \subsection{Main results}\label{subsec:main-results}
        Our first result, provides a characterization of the spectral stability  of  a relative equilibrium $\vz$  in terms of a spectral condition on the central configuration $\vxi$, no matter how the signs of the circulations are. Before stating and describing our first result, we pause by recalling what stability notion we are talking about. Being the Hamiltonian $H$ invariant under translations and rotations this implies, among others, that $0$ (having algebraic multiplicity $2$) as well as $\pm\omega i$ are Floquet characteristic multipliers, arising precisely from these symmetries. 
      
      What is natural to do is  to define the linear stability properties of a relative equilibrium by ruling out the eigenvalues coming from these conservation laws. More precisely, we define linear stability by restricting to a complementary subspace of the invariant space defined by the above symmetries. 
\begin{defn}\label{def:stability}
A relative equilibrium $\vz$ will be termed {\em non-degenerate\/} provided the remaining $2n-4$ eigenvalues of the matrix $B$ are not vanishing. A non-degenerate relative equilibrium is 
\begin{itemize}
\item {\em spectrally stable\/} if the nontrivial eigenvalues are purely imaginary 
\item {\em linearly stable\/} if, in addition, the restriction of the stability
matrix $B$ to $W^{\perp}$ has a block-diagonal Jordan form with
blocks $\begin{bmatrix}0 & \beta_{i}\\
-\beta_{i} & 0
\end{bmatrix}.$ 
\end{itemize}
\end{defn}
\begin{rem}
Otherwise stated a  relative equilibrium $\vz$ is spectrally stable if all Floquet multipliers (the eigenvalues of the monodromy matrix) belongs to the unit circle  (centered at the origin) $\U$ in the complex plane. Furthermore if the monodromy matrix is also diagonalizable, then $\vz$ is linearly stable. In this last case, in fact, the monodromy matrix can be factorized as direct symplectic sum of rotations or which is the same, it belongs to the maximal compact Lie subgroup  of $\Sp(2N-4)$.
\end{rem}

                \begin{mainthm}\label{thm:generalization-3-1-rob-intro}
A non-degenerate relative equilibrium $\vz$ (with angular velocity $\omega$) generated by the central configuration $\vxi$ is spectrally stable
if and only if for every eigenvalue $\mu$ of  $M_\Gamma^{-1}D^{2}H(\vxi)$ one of the following alternative holds
\begin{itemize}
\item $\mu \in i\R$ 
\item $\mu \in \R$ and $\mu \in [-|\omega|, |\omega|]$
\end{itemize}
where $M_\Gamma\= \Gamma_i \, \Id_2\delta_{ij}$ (where $\Id_2$ denotes the $2\times 2$ identity matrix and $\delta_{ij}$ is the Kronecker delta). In particular it is non-degenerate if and only if $\mu \neq \pm \omega$.
\end{mainthm}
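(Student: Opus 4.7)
The plan is to reduce stability of $\vz$ to a spectral problem for the operator $L \= M_\Gamma^{-1} D^2 H(\vxi)$, exploiting Roberts' anticommutativity property (Lemma~\ref{thm:property-of-H}), and then to read off the two cases from the eigenvalues of an explicit $2\times 2$ block. First I would pass to uniformly rotating coordinates $\vz = R(\omega t)\vw$ with $R(\theta) = e^{\theta K}$ and $K = \diag(J, \ldots, J)$: since $R$ commutes with $M_\Gamma$ and $K$, and $H$ is rotationally invariant, the relative equilibrium $\vz$ becomes the equilibrium $\vxi$ of the autonomous system in the new coordinates, and a direct computation gives the linearization
\[
B = K(L - \omega \Id).
\]

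Next, I would invoke Lemma~\ref{thm:property-of-H}, which asserts $KL + LK = 0$; geometrically, every $2\times 2$ block of $D^2 H$ has the form $c(\Id_2 - 2\hat u\hat u^{\mathsf T})$ and hence anticommutes with $J$. Consequently the map $v \mapsto Kv$ exchanges the $\mu$- and $(-\mu)$-eigenspaces of $L$, and on $\mathrm{span}\{v, Kv\}$ the operator $B$ takes the form $\bigl(\begin{smallmatrix} 0 & \mu+\omega \\ \mu-\omega & 0 \end{smallmatrix}\bigr)$ with eigenvalues $\pm\sqrt{\mu^2-\omega^2}$; equivalently, $B^2 = L^2 - \omega^2 \Id$. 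I would then identify the trivial subspace $W$: the constants $(a,\ldots,a)$ lie in $\ker L$ (so $\mu = 0$ with multiplicity $2$, contributing the pair $\pm i\omega$ to the spectrum of $B$); differentiating the rotational equivariance of $\nabla H$ at the central configuration yields $L(K\vxi) = \omega\,K\vxi$; and the fact that $\nabla H$ is homogeneous of degree $-1$ combined with the central-configuration equation $\nabla H(\vxi) = \omega M_\Gamma \vxi$ yields $L\vxi = -\omega\vxi$. These four $L$-eigenvalues $\{0,0,\omega,-\omega\}$ account precisely for the trivial $B$-spectrum $\{0,0,\pm i\omega\}$ of Definition~\ref{def:stability}.

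Finally, on the $K$-invariant complement $W^\perp$, spectral stability demands that every eigenvalue of $B$ lie on $i\R$, equivalently that $\mu^2 - \omega^2 \in \R_{\le 0}$ for every nontrivial eigenvalue $\mu$ of $L$. A short case analysis then yields the stated dichotomy: $\mu^2 \in \R$ forces $\mu \in \R \cup i\R$; for $\mu \in \R$ one needs $|\mu| \le |\omega|$, while for $\mu \in i\R$ one has $\mu^2 = -|\mu|^2 \le 0 \le \omega^2$ automatically; non-degeneracy corresponds to $\mu \ne \pm\omega$. The main obstacle is administrative rather than conceptual: one must verify that $W$ is $K$-invariant so that the pairing $v \leftrightarrow Kv$ restricts to $W^\perp$, and one must handle algebraic multiplicities in case $L$ fails to be diagonalizable---a genuine possibility in the indefinite-signature setting. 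Both points are controlled by the identity $B^2 = L^2 - \omega^2 \Id$, which transfers the full spectral picture of $B$ to that of $L$.
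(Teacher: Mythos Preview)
Your approach is correct and essentially identical to the paper's: both hinge on the anticommutation $KL=-LK$ (Lemma~\ref{thm:property-of-H}(iii)) to obtain $B^2=L^2-\omega^2\Id$ (the paper packages this computation as Lemma~\ref{thm:spectrum-B-Hessian}), after which the dichotomy $\mu\in i\R$ or $\mu\in\R\cap[-|\omega|,|\omega|]$ is read off exactly as you do. Your sign conventions differ from the paper's ($B=K(L-\omega\Id)$ versus $K(L+\omega\Id)$, and correspondingly $\nabla H(\vxi)=\omega M_\Gamma\vxi$ versus $-\omega M_\Gamma\vxi$), but since the conclusion depends only on $\omega^2$ and the $\pm\mu$ pairing, this is harmless.
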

The idea for proving this result is essentially based on the relation between the matrix $M_\Gamma^{-1}D^{2}H(\vxi)$ and the so-called {\em stability matrix\/}, namely the matrix which is responsible of the stability of the relative equilibrium which is defined by $B= K\big[M_\Gamma^{-1}D^{2}H(\vxi)+\omega\Id_{2N}\big]$ where $K$ is the Poisson matrix, i.e. a $2\times 2$ block diagonal matrix in which each non-vanishing block is given by $J$. 

It is worth to observe that, no matter how the signs of the circulations are,  the matrix appearing in Theorem  \ref{thm:generalization-3-1-rob-intro}
 i.e. $M_\Gamma^{-1}D^2H(\vxi)$ is $M_\Gamma$-symmetric namely is symmetric with respect to the scalar product induced by $M_\Gamma$ (cfr. Definition \ref{def:N-symmetry} for further details). However, if the circulations are all positive such a scalar product is positive definite and this implies that the spectrum of $M_\Gamma^{-1} D^2H(\vxi)$ is diagonalizable in the orthogonal group and its spectrum is real.  For mixed signs circulations, this is not true anymore, and in fact the spectrum of $M_\Gamma^{-1} D^2H(\vxi)$ will be, in general,  not real anymore.  This reflects  the indefinite Krein structure behind and among others responsible of the presence of Jordan blocks that are intimately related to the spectral stability properties of the relative equilibrium. In conclusion Theorem \ref{thm:generalization-3-1-rob-intro} represents the generalization of \cite[Theorem 3.1]{Rob13} in the case of mixed signs circulations. The matrix $B$ is $M_\Gamma$-Hamiltonian, namely is Hamiltonian with respect to the symplectic form $\omega_\Gamma$ which is represented by $K$ with respect to the $M_\Gamma$-scalar product. 
 \vskip0.5truecm
 Our second main result relates the spectrally stability properties of a relative equilibrium and the inertia indices of the central configuration generating it. 
  \begin{mainthm}\label{thm:stability-Morse-index-intro}
Let $\vz$ be non-degenerate relative equilibrium generated by the central configuration $\vxi$ and let $\widehat A(\vxi)\= D^2H(\vxi)+\omega M_\Gamma$. We assume that $\vz$ is spectrally stable. Then the following result holds.
\begin{itemize}
\item \underline{Case of positive angular velocity $\omega$} 
\begin{equation}\label{eq:index formula-1}
\begin{cases}
\iMor(\widehat{A}(\vxi))=\iMor(M_\Gamma), & \textrm{ if  } \left\langle M_\Gamma\vxi,\vxi\right\rangle \textrm{ is positive definite }\\
\iMor(\widehat{A}(\vxi))=\iMor(M_\Gamma)-1, & \textrm{ if }    \left\langle M_\Gamma\vxi,\vxi\right\rangle \textrm{ is negative definite }.
\end{cases}
\end{equation}
\item  \underline{Case of negative angular velocity $\omega$} 
\begin{equation}\label{eq:index formula-2}
\begin{cases}
\iMor(\widehat{A}(\vxi))=\coiMor(M_\Gamma)-1, & \textrm{ if } \left\langle M_\Gamma\vxi,\vxi\right\rangle \textrm{ is positive definite }\\
\iMor(\widehat{A}(\vxi))=\coiMor(M_\Gamma), & \textrm{ if }  \left\langle M_\Gamma\vxi,\vxi\right\rangle \textrm{ is negative definite }.
\end{cases}
\end{equation}
\end{itemize}
Furthermore, we have 
\begin{equation}\label{eq: restriction index formula}
\begin{cases}
\iMor(\widehat{A}|_{W^{\perp}}(\vxi))=\iMor(M_\Gamma|_{W^\perp}), & if\ \omega>0,\\
\iMor(\widehat{A}|_{W^{\perp}}(\vxi))=\coiMor(M_\Gamma|_{W^\perp}), & if\ \omega<0
\end{cases}
\end{equation}
where $W^\perp$ is the $M_\Gamma$-orthogonal complement of   $W=\textup{span}(\vxi, K\vxi)$.
\end{mainthm}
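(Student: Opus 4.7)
\emph{Strategy.}
The plan is to split the index computation along the $M_\Gamma$-orthogonal decomposition $\R^{2N}=W\oplus W^\perp$. On $W$ the symmetries of the problem give $\widehat{A}(\vxi)$ and $M_\Gamma$ explicit $2\times 2$ models in the basis $\{\vxi,K\vxi\}$, so the contribution is a direct signature count. On $W^\perp$ I would prove the equality of indices by a homotopy argument connecting $\widehat{A}$ to $\omega M_\Gamma$ through symmetric forms that remain non-degenerate thanks to the spectral stability of Theorem \ref{thm:generalization-3-1-rob-intro}.

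\emph{Contribution on $W$.}
First I would derive the action of $L \= M_\Gamma^{-1}D^2H(\vxi)$ on the generators of $W$. Differentiating the rotational identity $\nabla H(e^{\theta K}\vz)=e^{\theta K}\nabla H(\vz)$ at $\theta=0$ and using both $\nabla H(\vxi)=-\omega M_\Gamma\vxi$ (central configuration equation) and $KM_\Gamma=M_\Gamma K$ yields $D^2H(\vxi)K\vxi=-\omega M_\Gamma K\vxi$, hence $LK\vxi=-\omega K\vxi$ and $\widehat{A}(\vxi)K\vxi=0$. The logarithmic homogeneity $\nabla H(\lambda\vz)=\lambda^{-1}\nabla H(\vz)$ differentiated at $\lambda=1$ gives $D^2H(\vxi)\vxi=-\nabla H(\vxi)=\omega M_\Gamma\vxi$, so $L\vxi=\omega\vxi$ and $\widehat{A}(\vxi)\vxi=2\omega M_\Gamma\vxi$. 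In particular $L(W)\subset W$. Using $\trasp{K}=-K$ one also verifies $\langle M_\Gamma\vxi,K\vxi\rangle=0$ and $\langle M_\Gamma K\vxi,K\vxi\rangle=\langle M_\Gamma\vxi,\vxi\rangle$, hence in the basis $\{\vxi,K\vxi\}$ the Gram matrices are
\[
\widehat{A}|_W=\begin{pmatrix}2\omega\langle M_\Gamma\vxi,\vxi\rangle & 0\\ 0 & 0\end{pmatrix},\qquad M_\Gamma|_W=\langle M_\Gamma\vxi,\vxi\rangle\,\Id_2,
\]
from which $\iMor(\widehat{A}|_W)$, $\iMor(M_\Gamma|_W)$ and $\coiMor(M_\Gamma|_W)$ are read off directly in each of the four cases determined by the signs of $\omega$ and of $\langle M_\Gamma\vxi,\vxi\rangle$.

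\emph{Homotopy on $W^\perp$.}
Because $L$ is $M_\Gamma$-self-adjoint and $L(W)\subset W$, the $M_\Gamma$-orthogonal complement $W^\perp$ is $L$-invariant; both forms $\widehat{A}$ and $M_\Gamma$ then split along $W\oplus W^\perp$, giving $\iMor(\widehat{A})=\iMor(\widehat{A}|_W)+\iMor(\widehat{A}|_{W^\perp})$ and the analogous relation for $M_\Gamma$ (and for $\coiMor$). Consider next the continuous family
\[
A_t \= (1-t)D^2H(\vxi)+\omega M_\Gamma,\qquad t\in[0,1],
\]
interpolating $A_0=\widehat{A}(\vxi)$ and $A_1=\omega M_\Gamma$. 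A vector $\vv\in W^\perp$ lies in the radical of $A_t|_{W^\perp}$ iff $[(1-t)L+\omega\Id]\vv$ is $M_\Gamma$-orthogonal to $W^\perp$; since $L$ preserves $W^\perp$ this vector is itself in $W^\perp$, so it must lie in $W\cap W^\perp=\{0\}$ (the intersection being trivial because $\langle M_\Gamma\vxi,\vxi\rangle\neq 0$). Hence degeneracy at $t<1$ occurs iff $-\omega/(1-t)\in\mathrm{spec}(L|_{W^\perp})$. By Theorem \ref{thm:generalization-3-1-rob-intro} and the non-degeneracy hypothesis, the real spectrum of $L|_{W^\perp}$ lies in the \emph{open} interval $(-|\omega|,|\omega|)$, the endpoints $\pm\omega$ being the $W$-eigenvalues of $L$ and thus excluded on $W^\perp$ by non-degeneracy. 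For $t\in[0,1)$ the value $-\omega/(1-t)$ stays in $(-\infty,-\omega]$ when $\omega>0$ and in $[-\omega,+\infty)$ when $\omega<0$, hence always outside this open interval; at $t=1$, $\omega M_\Gamma|_{W^\perp}$ is non-degenerate since $M_\Gamma$ is invertible. Therefore $A_t|_{W^\perp}$ is non-degenerate for every $t\in[0,1]$, its Morse index is constant along the homotopy, and
\[
\iMor(\widehat{A}|_{W^\perp})=\iMor(\omega M_\Gamma|_{W^\perp})=\begin{cases}\iMor(M_\Gamma|_{W^\perp}) & \omega>0,\\ \coiMor(M_\Gamma|_{W^\perp}) & \omega<0,\end{cases}
\]
which is \eqref{eq: restriction index formula}. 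Summing with the $W$-contribution above and using the splittings of $\iMor(M_\Gamma)$ and $\coiMor(M_\Gamma)$ reproduces \eqref{eq:index formula-1}--\eqref{eq:index formula-2} in all four sign combinations.

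\emph{Main obstacle.}
The crux, and the feature that makes the indefinite setting genuinely harder than Roberts' positive-vorticity theorem, is the non-degeneracy of $A_t|_{W^\perp}$ along the whole path. This depends essentially on the combination of spectral stability (which confines the real spectrum of $L|_{W^\perp}$ to $[-|\omega|,|\omega|]$) with non-degeneracy (which opens this interval at $\pm\omega$); either hypothesis alone would fail to keep $-\omega/(1-t)$ out of the spectrum at $t=0$.
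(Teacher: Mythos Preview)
Your proof is correct and takes a genuinely different, cleaner route than the paper. Both approaches agree on the $W$-contribution (indeed your computation $\widehat A(\vxi)\vxi=2\omega M_\Gamma\vxi$, $\widehat A(\vxi)K\vxi=0$ is the right one), but they diverge sharply on $W^\perp$. The paper decomposes $\C^{2n}$ into the generalized eigenspaces $I_\nu$ of $A_\Gamma$, proves they are mutually $M_\Gamma$-orthogonal, and then computes $\iMor(\widehat A_\Gamma|_{I_\nu})$ case by case: for real $\nu$ it builds an explicit Jordan-block homotopy (Lemma~\ref{thm:Morseindex-of -A-on-general-eigenvector-subspace-for-real-eigenvalue}, relying on Lemmas~\ref{thm:matrix-1}--\ref{thm:matrix-2}) connecting $\widehat A_\Gamma|_{I_\nu}$ to $\pm M_\Gamma|_{I_\nu}$, while for complex $\nu\in\omega+i\R$ it invokes the anti-diagonal block structure of Lemma~\ref{thm:positive =negative}. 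Your single global homotopy $A_t=(1-t)D^2H(\vxi)+\omega M_\Gamma$ bypasses all of this machinery: the non-degeneracy of $A_t|_{W^\perp}$ reduces to $-\omega/(1-t)\notin\sigma(L|_{W^\perp})$, and the combination of spectral stability (real spectrum of $L$ in $[-|\omega|,|\omega|]$) with non-degeneracy (endpoints excluded on $W^\perp$) disposes of this in one stroke, with the non-real eigenvalues of $L$ playing no role since $-\omega/(1-t)$ is real. What the paper's finer decomposition buys is information about the signature on each individual spectral block of $A_\Gamma$, which is not needed for the theorem as stated; what your argument buys is a substantially shorter proof that makes transparent exactly how the two hypotheses conspire to keep the homotopy non-degenerate.
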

Given $N$-vortices in the plane, we define {\em total vortex angular momentum\/} $L$  as $L\= \sum_{i<j} \Gamma_i \Gamma_j$. Thus, if the vortex strengths are all positive, then $L>0$. However, when vorticities are different in signs, then $L$ could be of any sign  or even vanishes. Analogously to the moment of inertia in the $N$-body problem, it is possible to define the so-called {\em angular-impulse of the $N$-vortex problem\/} as follows
\begin{equation}
I(\vz)\= \dfrac12 \sum_{i=1}^N \Gamma_i \norm{\vz_i}^2
\end{equation}
and, as we will see  in the sequel, it will be crucial in order to give a variational interpretation to the central configurations, miming the analogous interpretation in the $N$-body problem. 
As already observed, for a relative equilibrium $\vz$ generated by $\vxi$, the angular velocity is constant and it is given by 
\begin{equation}\label{eq:segno-omega-intro}
	L=\omega \, \langle \nabla I(\vxi), \vxi \rangle= 2\, \omega \, I(\vxi)\quad \Rightarrow \quad  \omega=L/\big(2\, I(\vxi)\big).
\end{equation}
  Thus in the case of mixed signs circulations the angular velocity could have any sign what that cannot happen in the case of constant sign circulations. 
  
  The proof of Theorem \ref{thm:generalization-3-1-rob-intro} and Theorem \ref{thm:stability-Morse-index-intro} will be given in Section  \ref{sec:Proof-Main-results-abstract}. 
 
 In the last section, we analyze some interesting symmetric central configuration; more precisely, the equilateral triangle and the rhombus (sometimes called {\em kite\/}) central configuration. 
 
The equilateral triangle central configuration in the three-vortex problem is obtained by placing three vortices of any strength at the vertices of an equilateral triangle. Synge in his celebrated paper  published in 1949 (cfr. \cite{Syn49} ), proved that the corresponding relative equilibrium is  linearly stable if and only if $L>0$.  

Starting from this we get information on the a central configuration  knowing the stability of the induced relative equilibrium. More precisely, let us given three circulations $\Gamma_{1},\ \Gamma_{2},\ \Gamma_{3}$ placed at the following points
\[
\widehat{\vxi}_{1}=(1,0),\ \widehat{\vxi}_{2}=(-\dfrac{1}{2},\dfrac{\sqrt{3}}{2}),\ \widehat{\vxi}_{3}=(-\dfrac{1}{2},-\dfrac{\sqrt{3}}{2}),
\]
and we let $\widehat{\vc}=\sum_{i=3}^{3}\Gamma_{i}\widehat{\vz}_{i}$. Assuming  that  $L>0$ and  setting $\vxi=(\vxi_{1},\vxi_{2},\vxi_{3})$, for  $\vxi_{i}=\widehat{\vxi}_{i}-\widehat{\vc}$ then we conclude that 
\[
\iMor(\widehat A_\Gamma(\vxi))=
\begin{cases} 0  & \textrm{ if } \Gamma_{1}, \Gamma_{2}, \Gamma_{3} \textrm { have the same sign }\\
1  & \textrm{ if there is only one } \Gamma_{i}<0\\
2  & \textrm{ otherwise }
\end{cases}.
\]
About the kite central configuration, it is know (cfr. \cite{Rob13} for further details) that there exist two families of
relative equilibria where the configuration is a rhombus. Set $\Gamma_{1}=\Gamma_{2}=1$
and $\Gamma_{3}=\Gamma_{4}=m,$ where $m\in(-1,1]$ is a parameter.
Place the vortices at $z_{1}=(1,0)$, $z_{2}=(-1,0)$, $z_{3}=(0,y)$
and $z_{4}=(0,-y)$, forming a rhombus with diagonals lying on the
coordinate axis. This configuration is a central configuration provided that 
\begin{equation}\label{eq:rhombus-the-relationship-of-y-and-m-intro}
y^{2}=\dfrac{1}{2}\left(\beta\pm\sqrt{\beta^{2}+4m}\right),\ \ \beta=3(1-m).
\end{equation}
The angular velocity is given by 
\[
\omega=\dfrac{m^{2}+4m+1}{2(1+my^{2})}=\dfrac{1}{2}+\dfrac{2m}{y^{2}+1}.
\]
Taking plus sign in Equation \eqref{eq:rhombus-the-relationship-of-y-and-m-intro} yields a solution for $m\in(-1,1]$ that always has $\omega>0$. We refer to this solution as 
rhombus $A$. Taking $-$ in Equation \eqref{eq:rhombus-the-relationship-of-y-and-m-intro} yields a solution for $m\in(-1,0)$ having  $\omega>0$ for $m\in(-2+\sqrt{3},0),$
but $\omega<0$ for $m\in(-1, -2+\sqrt{3})$. We refer to this solution as rhombus $B$. Assuming  that $\vz$ is the relative equilibrium generated by the rhombus  central configuration $\vxi$. Then 
\begin{enumerate}
\item if the central configuration is  rhombus $A$, then we have 
\[
\iMor(\widehat{A}_\Gamma(\vxi))=\begin{cases}
0 & \textrm{ if }\quad 0<m\le1,\\ 
3 & \textrm{ if }\quad -2+\sqrt{3}<m<0,\\
4 & \textrm{ if }\quad -1<m<-2+\sqrt{3}.
\end{cases}
\]
\item if the central configuration is  rhombus $B$, then we have 
\[
\iMor(\widehat{A}_\Gamma(\vxi))=
\begin{cases}
2 & \textrm{ if }\quad -2+\sqrt{3}<m<0,\\
4 & \text{ if }\quad m^{*}<m<-2+\sqrt{3},\\
3 & \textrm{ if }\quad -1<m<m^{*}.
\end{cases}
\]
where $m^*$ is the only real root of the cubic $9m^3+3m^2+7m+5$.
\end{enumerate}

 The paper is organized as follows: 
 
 \tableofcontents

\subsection*{Notation}
At last, let us introduce some notation that we shall use henceforth without further reference. We have already mentioned that I stands for the {\em angular impulse\/} , however, the similar symbol $\Id_X$ or just $\Id$ will denote the identity operator on a space $X$ and we set for simplicity  $\Id_k := \Id_{\R^k}$ for $k \in \N$. We denote throughout by the symbol $\trasp{\#}$ (resp. $\traspinv{\#}$) the transpose (resp. inverse transpose) of the operator $\#$. \\
 $\Mat(m,n; \K)$ stands for  the space of  $m \times n$ matrices in the field $\K$ and if $m=n$ we just use the short-hand notation  $\Mat(m; \K)$. $\sigma(\#)$ denotes the spectrum of the linear operator $\#$.  We denote throughout by $J$ the standard  symplectic matrix $J\= \begin{bmatrix} 0&1\\-1&0 \end{bmatrix}$. $\U$ denotes the unit circle in the complex plane namely the set of all  complex numbers of modulus $1$.\\
 If  $Z$ is a finite dimensional vector space. We denote by $\Lin(Z)$ the vector space of all linear operators on $Z$.

\subsection*{Acknowledgements}
The second named author wishes to thank all faculties and staff of the Mathematics Department in the Shandong University (Jinan)  for providing excellent working conditions during his stay. 


\section{The geometrical and dynamical framework}\label{sec:The geometrical and dynamical framework}

In the Euclidean plane $(V, \langle \cdot, \cdot \rangle)$ equipped with coordinates $\vz=(p,q)$, we consider the standard symplectic form $\omega$ defined as follows 
\begin{equation}
\Omega(\cdot, \cdot) = \langle J\cdot , \cdot \rangle \quad \textrm{ where }\quad J\= \begin{bmatrix}
 0&1\\
 -1&0
 \end{bmatrix}.
\end{equation}
For  $i \in \{1, \dots, N\}$, let  $\Gamma_i\in \R\setminus\{0\}$ representing the vortex strength of the $N$-point vortex $\vz_i \in V$. We will assume throughout that the {\em total circulation\/} $\Gamma\= \sum_{i=1}^N \Gamma_i$ is nonzero. The  {\em center of vorticity\/} is then well-defined as  $\vc\=\Gamma^{-1}\sum_{i=1}^N \Gamma_i\vz_i$.  Let  $H: V^N\to \R$ be the function defined as follows
\begin{equation}\label{eq:Hamiltonian}
H(\vz)= - \sum_{i<j} \Gamma_i \Gamma_j \log r_{ij} \quad \textrm{ where } \quad r_{ij}\=\abs{\vz_i-\vz_j}
\end{equation}
for $\vz= (\vz_1, \dots, \vz_N) \in V^N$. In what follows we refer  to $H$ as the {\em $N$-vortex Hamiltonian function\/}. Denoting by $\Id_N$ the $N\times N$ identity matrix, we define the {\em matrix of circulations\/}  as the real $2N \times 2N$ matrix given by
\begin{equation}
M_\Gamma \= \begin{bmatrix}
 \Gamma_1 & & \\
 & \ddots & \\
 & & \Gamma_N
 \end{bmatrix}\otimes \Id_2= 
\begin{bmatrix}
 \Gamma_1 \, \Id_2& & \\
 & \ddots & \\
 & & \Gamma_N\,\Id_2
 \end{bmatrix}\in\Mat(2N; \R)
\end{equation}
and the symplectic matrix $K\= \Id_N \otimes J \in \Mat(2N; \R)$.  Let $\F_N(V)$ be the space of all $N$ (colored) points in $V$; in symbols
\begin{equation}
\F_N(V)\=\Set{\vz \in V^N| i \neq j \Longrightarrow \vz_i \neq \vz_j}= V^N\backslash \Delta.
\end{equation}
Its complement in $V^N$ is the  {\em collision set\/}
\begin{equation}
\Delta\=\Set{\vz \in V^N|\exists\, (i,j), i \neq j: \vz_i = \vz_j}= \bigcup_{1 \leq i <j \leq N} \Delta_{ij}
\end{equation}
where $\Delta_{ij}\=\Set{\vz \in V^N| \vz_i= \vz_j}$. It is immediate to check that the restriction of $H$ to $\F_N(V)$ is indeed a smooth function.


\subsection{Central configurations}\label{subsec:Central configurations}

Given two vectors $\vv, \vw$  in   $V^N$, then we let 
\begin{equation}\label{eq:circulation-product}
\Gscal{\vv, \vw}\= \sum_{i=1}^N \Gamma_i \vv_i \cdot \vw_i
\end{equation}
denote the {\em circulation scalar product of $\vv$ and $\vw$\/}, where $\vv_i \cdot \vw_i$ denotes the standard Euclidean product in $V$ of the $i$-th component of $\vv$ and $\vw$.  We observe that, if the vortex strengths are all positive, then $\Gscal{\cdot, \cdot}$ is, actually, an inner product equivalent to the Euclidean one; otherwise, is an indefinite (non-degenerate) scalar product.\footnote{%
This indefinite scalar product appears very often in mathematical physics; e.g. the Minkowski scalar product.} We also notice that $\Gscal{\vv, \vw}= \trasp{\vw}M_\Gamma \vv$ where $\trasp{\cdot}$ denotes the transpose with respect to the Euclidean product. Given $I_0 \in \R$, we define the     pseudo-sphere  $\S_N(V)$  and we'll refer to as  {\em circulation (pseudo)-sphere\/} or {\em circulation sphere\/} for short as 
\begin{equation}\label{eq:circulation-sphere}  
\S\=\S_N(V)\=\Set{\vz \in \F_N(V)| \Gnorm{\vz}^2=I_0}.
\end{equation} 
%
%

In particular the circulation sphere is equal to the   sphere (with respect to the circulation scalar product) in $V^N$ with collisions removed; thus $\S_N(V)= S_N(V)\setminus \Delta$ where
\[
S_N(V) \= \Set{\vz \in V^N| \Gnorm{\vz}^2=I_0}.
\]
\begin{rem}
It is worth noticing that if vortex strengths are all positive, then the  pseudo-sphere is in general an ellipsoid (thus, topologically a sphere) and if are all equal it reduces to the round sphere. In the general case, however, it  is a (non-compact) quadric.
\end{rem}

A {\em central configuration for the the $N$-vortex problem\/} is a (non-collision) configuration $\vxi \in \F_N(V)$ with the property that exists $\omega \in \R$ such that 
\begin{equation}\label{eq:cc-vortex}
\nabla H(\vxi) + \omega \nabla I(\vxi)=\nabla H(\vxi) + \omega  M_\Gamma(\vxi)=0. 
\end{equation}
Let $\vC: V^N \to V^N$ be the isometry  defined by $\vC(\vxi)= \vxi'$, with 
\begin{equation}
\vxi'_j= \vxi_j -2 \vc
\end{equation}
for each $j=1, \dots, N$. It is easy to check that $\vC$ is an isometry with respect to the $\Gscal{\cdot, \cdot}$.  In fact, it holds
\begin{multline}\label{eq:isometria}
\Gnorm{\vxi'}^2= \sum_{i=1}^N \Gamma_i \Gnorm{\vxi_i - 2 \vc}^2= \sum_{i=1}^N \Gscal{\Gamma_i \vxi_i, \vxi_i}- 4 \sum_{i=1}^N\Gscal{ \Gamma_i \vxi_i, \vc} + 4 \sum_{i=1}^N \Gamma_i \Gnorm{\vc}^2\\
= \Gnorm{\vxi}^2 -4 \Gnorm{\vc}^2\Gamma + 4 \Gnorm{\vc}^2 \Gamma=\Gnorm{\vxi}^2.
\end{multline}
We observe that $H(\vC \vxi)=H(\vxi)$ and by the computation performed in Equation \eqref{eq:isometria}, we conclude immediately that $I(\vxi')=I(\vxi)$. By these two fact readily follows that  if $\vxi$ is a central configuration then also $\vC \vxi$ is a central configuration. Now, by using Equation \eqref{eq:cc-vortex}, we  can conclude that   $\vC \vxi=\vxi$  and hence $\vxi=0$. Thus, if $\vxi$ is a central configuration, then  its center of vorticity $\vc=0$.
As consequence of this discussion and without leading in generalities in the sequel we'll restrict to the {\em reduced  phase space\/} which is the $2(N-1)$-dimensional subspace of $V^N$ defined by 
\begin{equation}
\X\= \Set{\vz=(\vz_1, \dots, \vz_N) \in V^N| \sum_{i=1}^N \Gamma_i\vz_i=0}.
\end{equation}

By using once again Equation \eqref{eq:cc-vortex} it follows in fact, that a central configuration can be seen as a critical point of the Hamiltonian function restricted to a level surface of the angular impulse in which $\omega$ acts as a Lagrangian multiplier. Note that, if $\bar\vz$ is a central configuration, so is $\lambda \bar \vz$ for any scalar $\lambda$. In this case, the parameter $\omega$ must be scaled by a factor $1/\lambda^2$. 

We observe that by the  invariance property of the Hamiltonian function as well as of the angular-impulse, we get that central configurations are not isolated and appears in a continuous family. To eliminate such a degeneracy, it is customary to fix a scaling (e.g. $I=I_0$) and to identify central configurations that are identical under rotations. 
\begin{rem}
It is also worth noticing that the linear stability properties of such rigid motions are not affected by such rotation. 	
\end{rem}
We also define the following sets 
 \[
 \S_N^\vc\= \S_N \cap \X \quad \textrm{ and }\quad S_N^\vc\= S_N \cap \X.
\]  
By the above discussion, in particular we get that if $\vz$ is a central configuration then $\vz \in \X$. However, in principle, a critical point of the restriction of $H|_{\S_N^\vc}$ is not necessarily a critical point of $H|_{\S_N}$. However since the Hamiltonian function is $\vC$-invariant and being $\X$ the space fixed by the action of the (compact Lie) orthogonal  group of $V^N$, it follows that  any critical point of the restriction of $H|_{\S_N^\vc}$ is indeed a critical point of $H|_{\S_N}$ (cf. \cite{MR547524}, for further details). However, as already observed critical points of $H|_{\S_N^\vc}$ are not isolated. In fact, if $\vz_0$ is a critical point of $H|_{\S_N^\vc}$, then $e^{\vartheta K} \vz_0$ is,  for every $\vartheta$. In order to eliminate this further degeneracy, we consider the quotient spaces
\[
 \widehat \S_N^\vc\= \S_N^\vc/S^1 \quad \textrm{ and }\quad \widehat S_N^\vc\= S_N^\vc/S^1
\]  
and we'll refer to the {\em shape sphere without collision\/} and the {\em shape sphere\/} respectively. It is worth noticing that both are the orbit space of the circle action on the spheres $\S_N^\vc$ and $ S_N^\vc$, respectively. In what follows, we'll refer to a central configuration as the critical point of $H_{\widehat \S_N^\vc}$  in order to distinguish from  critical points of $H_{\S_N^\vc}$ usually called {\em relative equilibria\/}.


\subsection{Relative equilibria}

A system of $N$ point vortices (in the plane) with vortex strength $\Gamma_{i}\neq0$ and positions $\vz_{i}\in V$  evolves according to the phase flow induced by the following Hamiltonian  system
\begin{equation}\label{eq:original equation}
\Gamma_{i}\dot{\vz}_{i}=J\nabla_{i}H\big(\vz(t)\big)=J\sum_{j\neq i}^{N}\dfrac{\Gamma_{i}\Gamma_{j}}{r_{ij}^{2}}(\vz_{j}-\vz_{i}), \qquad  i \in \{1, \dots, n\}
\end{equation}
where the Hamiltonian function $H$ is defined in Equation \eqref{eq:Hamiltonian},  and $\nabla_{i}$ denotes the two-dimensional partial gradient with respect to $z_{i}$. We will assume throughout that
\begin{enumerate}
\item[(H)]  the {\em total circulation} $\Gamma = \sum_{i=1}^N \Gamma_i $ is nonzero and that the center of vorticity $\vc = 0$. 
\end{enumerate}
In short-hand notation Equation \eqref{eq:original equation}, could be rewritten in the following form
\begin{equation}\label{eq:equation-motion}
M_\Gamma \dot \vz(t)= K\nabla H\big(\vz(t)\big), \qquad t \in [0, 2\pi]
\end{equation}
where $K$ is  the real $2N \times 2N$ matrix given by

{  \begin{equation*}
	K\=I_N\otimes J=\begin{bmatrix}
	J& & \\
	& \ddots & \\
	& & J
	\end{bmatrix}\in\Mat(2N; \R).
	\end{equation*}  }
A special class (maybe the easiest) of periodic solutions for this problem is given by the rigid motions of the system around its  center of mass. Such a motions are termed relative equilibria. More precisely we introduce the following definition. 
\begin{defn}\label{def:RE}
We term {\em relative equilibrium\/} (RE, for short) any $T\=2\pi/|\omega|$ periodic solution  of Equation \eqref{eq:equation-motion}, namely
\begin{equation}\label{eq:RE}
\vz(t) \= e^{- \omega K t} \vxi, \qquad \textrm{ where } \omega \in \R\setminus \{0\},\quad t \in [0,T] \textrm{ and } \vxi \in \F_N(V).
\end{equation}
\end{defn}
\begin{rem}
By this definition, as already observed, it follows that a relative equilibrium is a periodic solution in which each point vortex uniformly rotates with angular velocity $\omega \neq 0$ around (its common center of vorticity represented by) the origin.
\end{rem}
By a direct computation and by using Equation \eqref{eq:RE}, it follows that the central configuration $\vxi$ generating  a relative equilibrium satisfy the following equation
\begin{equation}\label{eq:initial equation}
-\omega\,\Gamma_{i}\vxi_{i}\;=\;\nabla_{i}H\big(\vxi\big)\;=\;\sum_{j\neq i}^{N}\dfrac{\Gamma_{i}\Gamma_{j}}{r_{ij}^{2}(0)}\big(\vxi_{j}-\vxi_{i}\big),\quad \textrm{ for each }i\in\{1, \dots, n\}.
\end{equation}
Otherwise said, for every $i \in \{1, \dots,n\}$,  $\nabla_{i}H\big(\vxi\big)+ \omega\,\Gamma_{i}\vxi=0$ which is equivalent to claim that $\vxi$ is a solution of Equation \eqref{eq:cc-vortex} hence a central configuration. Thus in a properly rotating frame a relative equilibrium is nothing but a central configuration. 

The following result points out some crucial properties of the Hamiltonian function $H$ that will be useful later on and we refer the interested reader to  \cite[Lemma 2.3]{Rob13} for the proof. 
\begin{lem}\label{thm:property-of-H}
The Hamiltonian $H$ has the following
three properties:
	\begin{enumerate} 
		\item[{\bf (i)}] $ \nabla H(\vz) \cdot \vz  \; = \;  - L, $
\item[{\bf (ii)}] $\nabla H(\vz) \cdot (K \vz) \; = \; 0$,
\item[{\bf (iii)}]  $D^2H(\vz) \, K \; = \;  - K \, D^2H(\vz)$.
\end{enumerate}
\end{lem}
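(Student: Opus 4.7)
The plan is to derive the three identities from the explicit form of $H$, leveraging symmetries wherever possible and only doing a blockwise Hessian computation when necessary.

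For \textbf{(i)} I would use the scaling (Euler) trick. Since $\log r_{ij} = \log |\vz_i - \vz_j|$ transforms as $\log|\lambda(\vz_i-\vz_j)| = \log r_{ij} + \log \lambda$, one immediately reads off
\[
H(\lambda \vz) = H(\vz) - \Big(\sum_{i<j}\Gamma_i\Gamma_j\Big) \log\lambda = H(\vz) - L\log\lambda.
\]
Differentiating at $\lambda = 1$ gives $\nabla H(\vz)\cdot \vz = -L$. For \textbf{(ii)} I would use rotational invariance: $e^{tK}$ acts as a simultaneous rotation by angle $t$ on every vortex, so pairwise distances $r_{ij}$ are preserved and $H(e^{tK}\vz) = H(\vz)$ for every $t \in \R$. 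Differentiating at $t = 0$ yields $\nabla H(\vz)\cdot K\vz = 0$.

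For \textbf{(iii)} the plan is to show that the anti-commutation holds blockwise, i.e.\ that each $2\times 2$ block $(D^2 H(\vz))_{ij}$ anti-commutes with $J$; since $K = \Id_N \otimes J$ is block diagonal, this is equivalent to the claimed identity $D^2H(\vz)\,K = -K\,D^2H(\vz)$. A direct computation from $\nabla_i H(\vz) = -\sum_{j\neq i}\Gamma_i\Gamma_j (\vz_i-\vz_j)/r_{ij}^2$ produces
\[
(D^2H(\vz))_{ij} = \frac{\Gamma_i\Gamma_j}{r_{ij}^2}\bigl(\Id_2 - 2\vu_{ij}\trasp{\vu_{ij}}\bigr),\qquad \vu_{ij}\=(\vz_i-\vz_j)/r_{ij},\quad i\neq j,
\]
and $(D^2H(\vz))_{ii} = -\sum_{j\neq i}(D^2H(\vz))_{ij}$.

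The key structural observation, which bypasses any further calculation, is that $-\log|\vw|$ is harmonic on $\R^2\setminus\{0\}$, so every such block is the Hessian of a harmonic function on the plane and is therefore symmetric and trace-free. A symmetric trace-free $2\times 2$ matrix has the form $\begin{bmatrix}a & b\\ b & -a\end{bmatrix}$, and a one-line check shows that any matrix of this form anti-commutes with $J$. Hence each block anti-commutes with $J$, and summing over blocks gives $D^2H(\vz)\,K + K\,D^2H(\vz) = 0$.

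The only mildly delicate step is the explicit blockwise Hessian formula and the verification that symmetric trace-free $2\times 2$ matrices anti-commute with $J$; both are short. The rest of the proof is essentially a repackaging of the two symmetries of $H$ (scaling and rotation) together with the harmonicity of the two-dimensional logarithmic kernel.
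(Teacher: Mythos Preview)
Your proof is correct. The paper itself does not actually prove this lemma: it defers entirely to \cite[Lemma~2.3]{Rob13} with the remark that the proof ``follows by a direct computation.'' Later in Section~2 the paper writes down the same block formula for $D^2H(\vxi)$ that you derive (their $A_{ij}$) and observes that the anti-commutation in (iii) can be read off blockwise, but without the harmonicity shortcut.

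Your argument is more self-contained and somewhat more conceptual than a bare computation. Deriving (i) and (ii) as infinitesimal versions of the scaling and rotational symmetries of $H$ is cleaner than expanding $\nabla_i H$ and summing, and it makes transparent \emph{why} these identities hold. For (iii), the observation that each off-diagonal block is the Hessian of the harmonic function $-\log|\cdot|$, hence symmetric and trace-free, and that any such $2\times 2$ matrix anti-commutes with $J$, is a genuine simplification over checking the four entries of $A_{ij}J + JA_{ij}$ by hand. Both approaches are short, but yours isolates the structural reason (harmonicity of the planar Green's function) behind property (iii), which is worth knowing since it is exactly this property that drives much of the subsequent spectral analysis in the paper.
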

\begin{rem}
	As we will see later on, property (iii) plays a crucial role in the investigation of the  linear stability for relative equilibria. For all of the same  sign vorticity strengths, such a condition reduces the problem to the investigation of the spectrum to a $2 \times 2$ symmetric matrix or equally well a complete factorization of the characteristic polynomial into even quadratic factors. 
	
	This property doesn't hold for relative equilibria of the $N$-body problem and in fact a challenging longstanding still open problem is to establish a precise relation between  the dynamical properties of the  relative equilibria and the 
  spectral properties of central configurations originating them.
  \end{rem}
Differentiating with respect to $\vz$ the equality appearing  at first item in Lemma \ref{thm:property-of-H},  we get 
\begin{multline}\label{eq:uu}
\langle D^2H(\vxi)[\vu], \vz\rangle + \langle \nabla H(\vxi), \vu\rangle = \langle D^2H(\vxi)[\vz], \vu\rangle + \langle \nabla H(\vxi), \vu\rangle =0, \quad \forall\, \vu \in T_\vxi  \widehat \S_N^\vc\\  \Rightarrow  \qquad  D^2H(\vxi)[\vz]+ \nabla H(\vxi)=0.
\end{multline}
Since $\vxi$ is a  central configuration and by using once again Equation \eqref{eq:cc-vortex}, we immediately get 
	$\nabla H(\vxi)=-\omega M_\Gamma \vxi$ and by summing up we get the equality 
		\begin{equation}\label{eq:impRob}
	M_\Gamma^{-1}D^{2}H(\vxi)\,\vxi =\omega \, \vxi.
	\end{equation}
	Equation \eqref{eq:impRob} together with property (iii) Lemma \ref{thm:property-of-H} shows that $BK\vx=0$.
The equation of motions given in Equation \eqref{eq:equation-motion}, in a uniformly rotating frame with angular velocity $\omega$ reduces to 
\begin{equation}\label{eq:equation-motion-rotating}
	M_\Gamma \dot \vw(t)= K \big(\nabla H(\vw(t))+\omega M \vw(t)\big), \qquad t \in \left[0, \dfrac{2\pi}{|\omega|}\right].
\end{equation}
In fact, let $\vw(t)\= e^{\omega K t}\vz(t)$; thus by a direct computation, we get
\begin{equation*}
	M_\Gamma \dot \vw(t)=\omega KM_\Gamma e^{\omega Kt}\vz(t)+M_\Gamma e^{\omega Kt} \nabla H(\vz(t))= K \big[\nabla H\big(\vw(t)\big)+\omega M_\Gamma \vw(t)\big] 
\end{equation*}
where the commutativity properties of $M_\Gamma$ with respect to $K$ and $e^K$ were tacitly used. 
In particular, a rest point of the Hamiltonian vector field appearing in Equation \eqref{eq:equation-motion-rotating} is a relative equilibrium, as expected.
\begin{rem}
We observe that if the circulations have mixed sign then $\omega$ could be of any sign (meaning that the vortices can rotate clockwise or counterclockwise with respect to the center of vorticity). In fact, by taking the scalar product with respect to $\vxi$ in Equation \eqref{eq:cc-vortex} as well as invoking the first claim in Lemma \ref{thm:property-of-H}, we get that 
\begin{equation}\label{eq:segno-omega}
	L=\omega \, \langle \nabla I(\vxi), \vxi \rangle= 2\, \omega \, I(\vxi)\quad \Rightarrow \quad  \omega=L/\big(2\, I(\vxi)\big).
\end{equation}
where the last equality directly follows by using the Euler theorem on positively homogeneous functions after observing that $I$ is homogeneous of degree $2$. Now, the claim follows by observing that a priori $L$ could be of either positive or negative. 
\end{rem}
\begin{rem}
In a more geometrical way the Hamilton equations in the uniformly  rotating frame are nothing but the Hamilton equation on the cotangent bundle $T^*S_N^c$ with the symplectic form induced by the standard symplectic form whose  Hamiltonian vectorfield (i.e. the symplectic gradient) is defined by
\begin{equation}
X_H(\vw)\=K \big[\nabla H(\vw)+\omega M_\Gamma \vw\big].
\end{equation}
\end{rem}
The variational equation associated to the Hamiltonian system given in Equation \eqref{eq:equation-motion-rotating} is
\begin{equation}\label{eq:equation-motion-rotating-linear}
	M_\Gamma \dot \vxi(t)= K \big[D^2 H(\vw(t))+\omega M\big] \vxi(t)\big), \qquad t \in \left[0, \dfrac{2\pi}{|\omega|}\right].
\end{equation}
In particular if $\vw(t)= e^{\omega Kt} \vxi$ is a relative equilibrium solution at the central configuration $\vxi$, and the admissible variations belongs to the tangent along the fibers of the principal $S^1$-bundle, then 
\[
D^2 H(\vw(t))= D^2 H(\vxi).
\] 
In fact, since $H$ is invariant under rotation, it follows that  $H\big(\vz(t)\big)=H\big(\vxi\big)$. Now, by  differentiating twice this last equality for  $\vw(t)=e^{\omega K t}\vxi$ (here the admissible variations are of the form $e^{\omega K t} \vu$ for $\vu \in T \S_N^\vc$), we get  
\begin{equation}\label{eq:or-li-eq}
e^{-\omega Kt}\,D^{2}H\big(\vw(t)\big)\,e^{\omega Kt}=D^{2}H\left(\vz_{0}\right).
\end{equation}
Inserting the expression given in Equation \eqref{eq:or-li-eq} into Equation \eqref{eq:equation-motion-rotating} and setting  $\veta=e^{\omega Kt}\vxi$, we get 
\begin{equation}\label{eq:li-eq}
\dot{\veta}(t)=K\big[M_\Gamma^{-1}D^{2}H(\vxi)+\omega \Id\big]\veta(t).
\end{equation}
Following Roberts in \cite{Rob13} we introduce the following definition. 
\begin{defn}\label{def:stability-matrix}
The   matrix 
\begin{equation}\label{eq:B}
B(\vxi)\=K  A_\Gamma(\vxi) \quad \textrm{ where } \quad  A_\Gamma(\vxi)\=\big[M_\Gamma^{-1}D^{2}H(\vxi)+\omega \Id\big]
\end{equation}
 is termed the {\em stability matrix of the relative equilibrium $\vz$ generated by $\vxi$.\/}
\end{defn}

\begin{note}
When no confusion may occur, in shorthand notation we denote by $B$ (resp. $ A_\Gamma$) the matrices  $B(\vxi)$ (resp. $ A(\vxi)$). 

\end{note}
\begin{defn}\label{def:N-symmetry}
Let $(\R^k, N)$ be a (maybe indefinite) non-degenerate scalar product space on   the real vector space $\R^k$ and let $G \in \Mat(k,\R)$. The matrix $G$ is termed a {\em $N$-symmetric matrix\/}  if
 \[
 NG= \trasp{G}N,
 \]
 where $\trasp{\cdot}$ denotes the transpose with the respect to the Euclidean product. 
 The matrix $R \in \Sp(2n,\Omega)$ is termed {\em $N$-Hamiltonian\/}, if 
 \[
 \trasp{R}P N+ NP R=0
 \]
where $P$ represents the symplectic form $\Omega$ with respect to the Euclidean product. 
\end{defn}
\begin{rem}
It is worth noticing that if $N=\Id$ then 	the definitions of $N$-symmetric (resp. $N$-Hamiltonian) matrix, reduces to the standard definition of symmetric (resp. Hamiltonian) matrix with respect to the Euclidean scalar product (resp. canonical symplectic structure).
\end{rem}
By a direct calculation follows that  $M_\Gamma^{-1}D^{2}H(\vxi)(\vxi)$ is a $M_\Gamma$-symmetric matrix (whatever the sign of each circulation is) and $B(\vxi)$ is $M_\Gamma$-Hamiltonian, meaning that 
\[
\trasp{B}(\vxi)KM_\Gamma + M_\Gamma KB(\vxi)=0.
\]
This last claim directly follows by Equation \eqref{eq:impRob}.  Otherwise stated, the matrix $B(\vxi)$ is Hamiltonian with respect  to the {\em vortex symplectic form \/} defined by 
\begin{equation}
\omega_\Gamma(\cdot,\cdot)=\langle K\cdot, \cdot \rangle_\Gamma= \langle M_\Gamma\, K \cdot, \cdot \rangle.	
\end{equation}
\begin{rem}\label{rem:interessante}
We pause the exposition by  introducing the following remark that explain why the mixed sign circulations case is really completely different from the constant sign circulations.
It is well-known that the product of two symmetric matrices is symmetric as soon as the two matrices commute. Thus,  in general, the matrix $M_\Gamma^{-1}D^{2}H(\vxi)(\vxi)$ whatever $\vxi$ is, and no matter how the signs of $\Gamma_i$ arem  not be symmetric. Clearly, if all circulations are equal then $M_\Gamma$ is just a multiple of the identity and, of course, $M_\Gamma^{-1}D^{2}H(\vxi)$ is symmetric. However, if the circulations strengths $\Gamma_i$ are all positive, then the matrix $M_\Gamma$ is positive definite and in particular its spectrum is real. For mixed signs circulation strengths, however, $M_\Gamma$-is (nondegenerate) but indefinite and its spectrum \cite[Theorem 5.1.1, pag.74]{GLR05} is not necessarily real, anymore and this fact is responsible among others of some technicalities as well as a deep change in the dynamics of the problem.  
\end{rem}
We conclude this section by showing a nice and important block matrix structure of the Hessian matrix $D^{2}H(\vxi)$. This property comes from item (iii) in Lemma \ref{thm:property-of-H}. 
Let $\vxi$ be a central configuration; thus $\vxi=(\vxi_{1},\vxi_{2},\ldots,\vxi_{n})\in\R^{2n}$
and let $\vxi_{ij}=(\vxi_{i}-\vxi_{j})/r_{ij}$. A direct computation shows that
\begin{equation}
D^{2}H(\vxi)\;=\;\begin{bmatrix}A_{11} & A_{12} & \cdots & A_{1n}\\
\vdots &  &  & \vdots\\
A_{n1} & A_{n2} & \cdots & A_{nn}
\end{bmatrix}\label{eq:D2H}
\end{equation}
 where $A_{ij}$ is the $2\times2$ symmetric matrix
 \begin{equation}\label{eq:Aij}
\begin{cases} 
A_{ij}\=\dfrac{\Gamma_{i}\Gamma_{j}}{r_{ij}^{2}}[I-2\vxi_{ij}\trasp{\vxi_{ij}}] & \textrm{ if }i\neq j\\
 A_{ii}\=-\sum_{j\neq i}A_{ij} &\textrm{ otherwise.}
\end{cases}
\end{equation}
Note that $A_{ij}=A_{ji}$ and, for $i\neq j$ 
\[
A_{ij}=\dfrac{\Gamma_{i}\Gamma_{j}}{r_{ij}^{4}}\begin{bmatrix}(y_{i}-y_{j})^{2}-(x_{i}-x_{j})^{2} & -2(x_{i}-x_{j})(y_{i}-y_{j})\\[0.1in]
-2(x_{i}-x_{j})(y_{i}-y_{j}) & (x_{i}-x_{j})^{2}-(y_{i}-y_{j})^{2}
\end{bmatrix}
\]
where $\vxi=(x,y)$. The fact that $J$ commutes with each $A_{ij}$ gives another proof
of the fact that $D^{2}H(\vxi)$ and $K$ anti-commute.

\begin{lem}\label{thm:invariant-subspaces}
	The following facts hold:
	\begin{enumerate}
	\item $\vs=[1,0,1,0,\ldots,1,0],\ K\vs\in\ker D^{2}H(\vxi)$ ,\\
	\item For every relative equilibrium $\vz$ generated by the central configuration $\vxi$, we have that  $K\vxi\in\ker B$.
	\end{enumerate}
\end{lem}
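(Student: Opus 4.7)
The plan is to treat the two statements separately, both of which fall out of structural properties already established.

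For item (1), I would use the translation invariance of the Hamiltonian. Because $H$ depends only on the pairwise distances $r_{ij}=|\vz_i-\vz_j|$, one has $H(\vz+t\vs)=H(\vz)$ for every $t\in\R$, where $\vs=[1,0,1,0,\ldots,1,0]^{T}$ generates a rigid translation of all vortices in the $x$-direction. Differentiating twice in $t$ at $t=0$ gives $D^{2}H(\vxi)\,\vs=0$, and the analogous argument with the translation direction $K\vs$ (rigid translation in the $y$-direction, up to sign) gives $D^{2}H(\vxi)\,K\vs=0$. Alternatively, and perhaps more cleanly for the writeup, one can argue computationally from the block form \eqref{eq:D2H}--\eqref{eq:Aij}: since $A_{ii}=-\sum_{j\neq i}A_{ij}$, every block-row of $D^{2}H(\vxi)$ sums to the zero $2\times 2$ matrix, so any block-constant vector $(\vu,\vu,\ldots,\vu)$ is annihilated by $D^{2}H(\vxi)$; both $\vs$ and $K\vs$ are of this form. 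As a sanity check, the two routes are consistent via property (iii) of Lemma \ref{thm:property-of-H}: $D^{2}H(\vxi)\,K\vs=-K\,D^{2}H(\vxi)\,\vs=0$.

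For item (2), I would compute $BK\vxi$ directly, unwinding the definition $B=K\bigl[M_\Gamma^{-1}D^{2}H(\vxi)+\omega\Id\bigr]$, and then invoke, in order, the three ingredients already in play: (a) the anticommutation $D^{2}H(\vxi)\,K=-K\,D^{2}H(\vxi)$ from Lemma \ref{thm:property-of-H}(iii); (b) the commutation $M_\Gamma K=KM_\Gamma$ (hence $M_\Gamma^{-1}K=KM_\Gamma^{-1}$), which is obvious from the block structure of both matrices; (c) the eigenvalue identity \eqref{eq:impRob}, namely $M_\Gamma^{-1}D^{2}H(\vxi)\,\vxi=\omega\vxi$; and (d) $K^{2}=-\Id$. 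Applying (a)--(c) to the first term and (d) to the second yields $BK\vxi=\omega\vxi-\omega\vxi=0$.

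The main ``difficulty'' is really only bookkeeping: since $M_\Gamma$ is not a scalar multiple of the identity when the vorticities differ, one must be careful that $M_\Gamma^{-1}$ genuinely commutes with $K$ (which it does, block-diagonally) and that the anticommutation in (iii) is applied to $D^{2}H(\vxi)$ rather than to the whole matrix $\widehat{A}(\vxi)$. No real obstacle arises; the value of the lemma is that it pins down two explicit elements of $\ker D^{2}H(\vxi)$ and one explicit element of $\ker B$, which together with $\vxi$ itself (through the angular-velocity direction) are the vectors spanning the symmetry subspace $W=\mathrm{span}(\vxi,K\vxi)$ used in Theorem \ref{thm:stability-Morse-index-intro} and in the definition of non-degeneracy of a relative equilibrium.
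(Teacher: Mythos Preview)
Your proposal is correct and matches the paper's own proof: for item (1) the paper uses exactly your block-row-sum argument from \eqref{eq:D2H}--\eqref{eq:Aij} together with property (iii) of Lemma~\ref{thm:property-of-H}, and for item (2) it performs the same direct computation of $BK\vxi$ via anticommutation and \eqref{eq:impRob}. Your translation-invariance remark for (1) is a pleasant conceptual gloss, but the computational route you also give is precisely what the paper does.
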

\begin{proof}
	From the conservation of the center of vorticity, by using  Equation \eqref{eq:Aij} and by a straightforward  calculation we get that  
	\begin{equation*}
	 \sum_{i=1}^{n}A_{ji}\begin{pmatrix}1\\
	0
	\end{pmatrix}=\sum_{i\neq j}A_{ji}\begin{pmatrix}1\\
	0
	\end{pmatrix}+A_{jj}\begin{pmatrix}1\\
	0
	\end{pmatrix}
	=  \sum_{i\neq j}A_{ji}\begin{pmatrix}1\\
	0
	\end{pmatrix}-\sum_{i\neq j}A_{ji}\begin{pmatrix}1\\
	0
	\end{pmatrix}=0.
	\end{equation*}
	This implies that 
	\begin{equation*}
	D^{2}H(\vxi)\begin{pmatrix}1\\
	0\\
	1\\
	0\\
	\vdots\\
	1\\
	0
	\end{pmatrix} =\begin{pmatrix}A_{11} & A_{12} & \cdots & A_{1n}\\
	\vdots &  &  & \vdots\\
	A_{n1} & A_{n2} & \cdots & A_{nn}
	\end{pmatrix}\begin{pmatrix}1\\
	0\\
	1\\
	0\\
	\vdots\\
	1\\
	0
	\end{pmatrix}
	=\begin{pmatrix}\sum_{i=1}^{n}A_{1i}\begin{pmatrix}1\\
	0
	\end{pmatrix}\\
	\vdots\\
	\sum_{i=1}^{n}A_{ni}\begin{pmatrix}1\\
	0
	\end{pmatrix}
	\end{pmatrix}=0.
	\end{equation*}
	Thus $\vs \in \ker D^2H(\vxi)$. \\
	Furthermore, by the last claim in Lemma \ref{thm:property-of-H}, we have $D^2H(\vxi)K\vs=-KD^2H(\vxi)\vs=0$ which implies that also $K\vs \in \ker D^2H(\vxi)$.		 
	
The  proof of the second claim directly follows by 	Equation \eqref{eq:impRob} together with property (iii) Lemma \ref{thm:property-of-H}. In fact, by a direct computation we get
	\begin{multline}
B\vxi= K[M_\Gamma^{-1}D^2 H(\vxi) \vxi + \omega\, \vxi]= K[\omega \vxi + \omega \vxi]= 2\, \omega K\vxi\\
BK\vxi= 	 K[M_\Gamma^{-1}D^2 H(\vxi) K\vxi + \omega\,K \vxi]= 
K[-K M_\Gamma^{-1}D^2 H(\vxi) \vxi + \omega\,K \vxi]\\= K[-\omega\,K \vxi+\omega\,K \vxi]=0.
\end{multline}
This concludes the proof. 
\end{proof}
By the first claim in Lemma \ref{thm:invariant-subspaces} it then follows that the restriction of the stability matrix to  the spectrum of $B$ is precisely $\{\pm \omega i\}$.
For a given relative equilibrium $\vz$ with corresponding central configuration $\vxi$, let $W=\textrm{span}\{\vxi,K\vxi\}$. As already proved in Lemma \ref{thm:invariant-subspaces} this is an invariant subspace for $B$ and the restriction of $B$ to $W$ is given by 
\[
B\Big\vert_{W}=\begin{bmatrix}0 & 0\\
2\omega & 0
\end{bmatrix}
\]
\begin{note}
In what follows, we denote by $W^{\perp}\subset V^N$  the $M_\Gamma$-orthogonal complement of $W$, that is, 
\[
W^{\perp}=\Set{\vw\in\R^{2n}:\Gscal{\vw, \vv}=\trasp{\vw}M_\Gamma\vv=0, \forall \vv\in W}.
\]
\end{note}
\begin{lem}\label{lemma:Vperp}
The vector space $W^{\perp}$ has
dimension $2n-2$ and is invariant under $B$. If $L\neq0$, then
$W\cap W^{\perp}=\{0\}$. 
\end{lem}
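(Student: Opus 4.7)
My plan is to handle the three claims (dimension, $B$-invariance, triviality of $W\cap W^\perp$) in succession, using the identities $\widehat{A}\vxi = 2\omega M_\Gamma\vxi$ and $\widehat{A}K\vxi = 0$ obtained by plugging the central configuration equation \eqref{eq:impRob} and the anticommutation relation (iii) into the definition of $\widehat A = D^2H(\vxi)+\omega M_\Gamma$. Throughout I will use that $M_\Gamma$ and $K$ commute (both are block-diagonal matching the block structure) and that $K^2=-\Id_{2N}$, so $\trasp{K}K=\Id_{2N}$.

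For the dimension claim, since $M_\Gamma$ is nondegenerate, $\dim W + \dim W^\perp = 2n$, so it suffices to show $\vxi$ and $K\vxi$ are linearly independent. If $K\vxi = c\vxi$ for some real $c$, then $J\vxi_i = c\vxi_i$ componentwise, and since $J$ has no real eigenvalues this forces $\vxi = 0$, contradicting that $\vxi\in\F_N(V)$.

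For $B$-invariance, take $\vw\in W^\perp$ and observe that $M_\Gamma B = K\widehat{A}$ with $\widehat{A}$ symmetric. Then
\[
\langle B\vw,\vxi\rangle_\Gamma = \trasp{\vxi}K\widehat{A}\vw = \trasp{(\widehat A\trasp{K}\vxi)}\vw = -\trasp{(\widehat A K\vxi)}\vw = 0,
\]
and
\[
\langle B\vw,K\vxi\rangle_\Gamma = \trasp{(K\vxi)}K\widehat{A}\vw = \trasp{\vxi}\widehat{A}\vw = \trasp{(\widehat A\vxi)}\vw = 2\omega\,\trasp{\vxi}M_\Gamma\vw = 2\omega\,\langle\vxi,\vw\rangle_\Gamma = 0,
\]
using $\trasp K K=\Id$, the symmetry of $\widehat A$, and the two key identities on $\widehat A\vxi$ and $\widehat A K\vxi$ noted above. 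Hence $B\vw\in W^\perp$.

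For the last claim, it suffices to check that the Gram matrix of $\{\vxi,K\vxi\}$ in the circulation scalar product is nondegenerate whenever $L\neq 0$. A direct computation gives $\langle\vxi,\vxi\rangle_\Gamma = 2I(\vxi) = \langle K\vxi,K\vxi\rangle_\Gamma$ (using $\trasp{K}K=\Id$ and the commutation $KM_\Gamma=M_\Gamma K$), while $\langle\vxi,K\vxi\rangle_\Gamma = -\trasp{\vxi}KM_\Gamma\vxi = 0$ because $KM_\Gamma$ is skew-symmetric. Thus the Gram matrix is $2I(\vxi)\,\Id_2$, which is nondegenerate iff $I(\vxi)\neq 0$; and the relation $\omega = L/(2I(\vxi))$ in \eqref{eq:segno-omega} guarantees precisely this when $L\neq 0$. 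The only step requiring care is the $B$-invariance, where one must resist the temptation to use the $M_\Gamma$-Hamiltonian property of $B$ directly (which would give invariance of the \emph{symplectic} complement, not the orthogonal one); instead the argument hinges on the two identities $\widehat A\vxi = 2\omega M_\Gamma\vxi$ and $\widehat A K\vxi=0$, i.e.~on the very fact that $\vxi$ generates the relative equilibrium.
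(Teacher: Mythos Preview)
Your proof is correct. The paper does not supply its own argument for this lemma; it simply cites \cite[Lemma~2.6]{Rob13}. Your direct verification---dimension count via nondegeneracy of $M_\Gamma$, $B$-invariance via the two identities $\widehat A\vxi=2\omega M_\Gamma\vxi$ and $\widehat A K\vxi=0$, and the Gram matrix computation $\langle\cdot,\cdot\rangle_\Gamma\big|_{W}=2I(\vxi)\,\Id_2$ together with $L=2\omega I(\vxi)$---is exactly the standard route and each step checks out. Your closing remark is also well taken: the $M_\Gamma$-Hamiltonian property of $B$ alone would only yield invariance of the $\omega_\Gamma$-symplectic complement of $W$, which is a different subspace; the invariance of the $M_\Gamma$-orthogonal complement genuinely relies on the special identities satisfied by $\vxi$ and $K\vxi$.

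One incidental observation: the paper's later proof of Theorem~\ref{thm:stability-Morse-index-intro} records the pair of identities with the roles of $\vxi$ and $K\vxi$ interchanged (i.e.\ $\widehat A_\Gamma\vxi=0$ and $\widehat A_\Gamma K\vxi=2\omega M_\Gamma K\vxi$). Your version is the one consistent with \eqref{eq:impRob} and with the computation $B\vxi=2\omega K\vxi$, $BK\vxi=0$ carried out in Lemma~\ref{thm:invariant-subspaces}; the discrepancy does not affect the present lemma, since either assignment still gives $\widehat A W\subset M_\Gamma W$ and hence the same invariance conclusion.
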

\begin{proof}
For the proof of this result, we refer the interested reader to \cite[Lemma 2.6]{Rob13}. 
\end{proof}
As long as $L\neq0$, Lemma $\ref{lemma:Vperp}$ allows us to define
the linear stability with respect to the $M_\Gamma$-orthogonal complement of the subspace $W$. Thus, a relative equilibrium is spectrally (resp. linearly) stable if the restriction of the matrix $B$ onto $W^\perp$ is spectrally (resp. linearly) stable according to Definition \ref{def:stability}. However 
instead of working on a reduced phase space (eliminating the rotational symmetry), it is more convenient to work on the full phase space we investigate the linear stability properties of the orbits  in the full space. However, in this case, by the invariance properties of $H$, it readily follows that $4$ is the minimal possible nullity (or kernel dimension) of the corresponding linear differential operator.


\section{Spectral properties and canonical forms  of the stability matrix}\label{sec:Some Linear Algebraic preliminaries}
 
This section is devoted to collect several linear algebraic results on the spectrum of the stability matrix $B$  that we will need in order to prove our main result. In fact, as already observed,  the eigenvalues of $B$ determine the spectral and linear stability of the corresponding periodic solution.


\subsection{Spectral properties of the stability matrix}

The aim of this subsection is to study the relation intertwining the spectrum of the stability matrix $B$ and the spectrum of $M_\Gamma^{-1}D^{2}H(\vxi)$ (and hence of $\widehat A$). The first result, that we recall here for the sake of the reader, was proved by  Roberts in \cite{Rob13}.
\begin{lem}\label{thm:simple-eigenvalue block-1} The characteristic polynomials of $M_\Gamma^{-1}D^{2}H(\vxi)$ and $D^{2}H(\vxi) $ are even. 

Moreover, for each on of the above matrices, $\vv$ is an eigenvector with eigenvalue $\mu$ if and only if $K\vv$ is an eigenvector with eigenvalue $- \mu$.
\end{lem}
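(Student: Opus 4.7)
The key ingredient for the entire proof is property (iii) of Lemma \ref{thm:property-of-H}, which asserts the anti-commutation $D^{2}H(\vxi)\,K=-K\,D^{2}H(\vxi)$. The plan is to first extend this anti-commutation to $M_\Gamma^{-1}D^{2}H(\vxi)$, then deduce both conclusions by a single elementary similarity argument. The only delicate bookkeeping is checking that $M_\Gamma^{-1}$ commutes with $K$: since $M_\Gamma=\mathrm{diag}(\Gamma_i\Id_2)$ and $K=\Id_N\otimes J$ are both block-diagonal and each scalar block $\Gamma_i^{-1}\Id_2$ trivially commutes with $J$, we have $M_\Gamma^{-1}K=KM_\Gamma^{-1}$. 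Composing with the property from Lemma \ref{thm:property-of-H} then yields
\[
\bigl(M_\Gamma^{-1}D^{2}H(\vxi)\bigr)K=M_\Gamma^{-1}\bigl(D^{2}H(\vxi)K\bigr)=-M_\Gamma^{-1}K\,D^{2}H(\vxi)=-K\bigl(M_\Gamma^{-1}D^{2}H(\vxi)\bigr),
\]
so both matrices in the statement anti-commute with $K$.

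Once this is in place, set $A$ to be either $D^{2}H(\vxi)$ or $M_\Gamma^{-1}D^{2}H(\vxi)$. Since $K^{2}=-\Id_{2N}$, the matrix $K$ is invertible, and the relation $AK=-KA$ can be rewritten as $K^{-1}AK=-A$. In other words, $A$ and $-A$ are similar, hence share the same characteristic polynomial:
\[
p_{A}(\lambda)=\det(\lambda\Id_{2N}-A)=\det(\lambda\Id_{2N}+A)=(-1)^{2N}\det(-\lambda\Id_{2N}-A)=p_{A}(-\lambda),
\]
where the final equality uses that $2N$ is even. This is precisely the statement that $p_{A}$ is an even polynomial.

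For the eigenvector claim, the same anti-commutation does all the work: if $A\vv=\mu\vv$, then
\[
A(K\vv)=(AK)\vv=-(KA)\vv=-\mu\,K\vv,
\]
so $K\vv$ is an eigenvector with eigenvalue $-\mu$. Invertibility of $K$ (again from $K^2=-\Id$) makes $\vv\mapsto K\vv$ a bijection on eigenspaces, yielding the converse direction and completing the proof. There is essentially no obstacle here beyond recording the commutation $M_\Gamma^{-1}K=KM_\Gamma^{-1}$; everything reduces to the single structural fact that both matrices anti-commute with an invertible matrix of even size.
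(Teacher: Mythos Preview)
Your proof is correct and follows exactly the approach the paper has in mind: the paper defers the argument to \cite[Lemma 2.4]{Rob13}, and the anti-commutation $AK=-KA$ (together with the commutation $M_\Gamma^{-1}K=KM_\Gamma^{-1}$) is precisely the mechanism used there. There is nothing to add.
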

\begin{proof}
The proof of this result follows by a direct computation. (Cf. \cite[Lemma 2.4]{Rob13}, for further details).
\end{proof}

The following result relates the spectrum of $B$ with the spectrum of the matrix $M_\Gamma^{-1}D^{2}H(\vxi)$ and will be a key ingredient for the stability analysis. 
\begin{lem}\label{thm:spectrum-B-Hessian} 
Under the above notation, $\lambda \in \sigma (B)$ if and only if  $\mu_\pm\=\pm \sqrt{\lambda^2 + \omega^2}\in\sigma(M_\Gamma^{-1}D^{2}H(\vxi))$, where $\sqrt{\cdot}$ denotes the the square root of the maybe negative/complex number $\lambda^2 + \omega^2$. More precisely,  the following facts hold:
\begin{enumerate}
\item $\lambda \in \sigma(B) \cap i\R)$ iff $\mu \in \sigma(M_\Gamma^{-1}D^{2}H(\vxi)) \cap i\R$ or $\mu \in \sigma(M_\Gamma^{-1}D^{2}H(\vxi)) \cap \R$ and  $|\mu| \in \big[-|\omega|, |\omega|\big]$. 
\item $\lambda \in \sigma(B) \cap \R)$ iff $\mu \in \sigma(M_\Gamma^{-1}D^{2}H(\vxi))\cap \R$ and $|\mu| \geq  |\omega|$.
\item $\lambda \in \sigma(B) \cap \C\setminus\big(\R \cup i \R\big)$ iff $\mu \in \sigma(M_\Gamma^{-1}D^{2}H(\vxi)) \cap \C\setminus\big(\R \cup i \R\big)$.
\end{enumerate}
\end{lem}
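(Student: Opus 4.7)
Set $C \= M_\Gamma^{-1}D^{2}H(\vxi)$, so $B = K(C + \omega\Id)$. The crux of the argument is to observe that $B^{2}$ is a polynomial in $C$: indeed, $K^{2}=-\Id$, $K$ commutes with $M_\Gamma$ (the latter being block-scalar on $2\times 2$ blocks), and by item (iii) of Lemma~\ref{thm:property-of-H} we have $KC=-CK$. A direct expansion then gives
\begin{equation}
B^{2} \;=\; (KC+\omega K)^{2} \;=\; KCKC + \omega K^{2}C + \omega KCK + \omega^{2}K^{2} \;=\; C^{2} - \omega^{2}\Id,
\end{equation}
where we used $KCKC=-CK^{2}C=C^{2}$ and $KCK=-K^{2}C=C$. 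Thus the spectral mapping theorem yields one direction immediately: if $\lambda\in\sigma(B)$, then $\lambda^{2}+\omega^{2}\in\sigma(C^{2})$, hence $\pm\sqrt{\lambda^{2}+\omega^{2}}\in\sigma(C)$ (with the complex square root).

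For the converse I would work with an explicit $2\times 2$ block. By Lemma~\ref{thm:simple-eigenvalue block-1}, if $C\vv=\mu\vv$, then $C(K\vv)=-\mu K\vv$, so the subspace $\mathrm{span}\{\vv,K\vv\}$ is $B$-invariant, and in this basis
\begin{equation}
B\Big|_{\mathrm{span}\{\vv,K\vv\}} \;=\; \begin{bmatrix} 0 & \mu-\omega \\ \mu+\omega & 0 \end{bmatrix},
\end{equation}
since $B\vv=(\mu+\omega)K\vv$ and $B(K\vv)=(\mu-\omega)\vv$. Its eigenvalues are $\lambda=\pm\sqrt{\mu^{2}-\omega^{2}}$, giving the reverse direction. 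The case $\mu\neq 0$ is covered since then $\vv$ and $K\vv$ are linearly independent (they belong to distinct eigenspaces of $C$); the $\mu=0$ case can be handled separately by observing, if necessary from the identity $B^{2}=C^{2}-\omega^{2}\Id$, that $\ker C$ yields the eigenvalues $\pm i|\omega|$ of $B$ up to Jordan structure.

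The three alternatives in the statement are then a routine case analysis on the relation $\mu^{2}=\lambda^{2}+\omega^{2}$ (with $\omega\in\R$):
\begin{itemize}
\item if $\lambda=i\alpha\in i\R$, then $\mu^{2}=\omega^{2}-\alpha^{2}$, which is either non-negative (giving $\mu\in\R$ with $|\mu|\leq|\omega|$) or negative (giving $\mu\in i\R$);
\item if $\lambda\in\R\setminus\{0\}$, then $\mu^{2}=\lambda^{2}+\omega^{2}\geq \omega^{2}\geq 0$, hence $\mu\in\R$ with $|\mu|\geq|\omega|$;
\item if $\lambda\in\C\setminus(\R\cup i\R)$, then $\lambda^{2}\notin\R$, whence $\mu^{2}\notin\R$, and in particular $\mu\in\C\setminus(\R\cup i\R)$.
\end{itemize}

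\textbf{Main obstacle.} The principal subtlety is not the algebraic identity $B^{2}=C^{2}-\omega^{2}\Id$, which is quick, but rather verifying that the correspondence $\lambda\leftrightarrow\mu$ is a genuine equivalence and not merely a one-sided implication. The $2\times 2$ block reduction does this cleanly when $\vv$ and $K\vv$ are independent; the degenerate situation $\mu=0$ (where $K\vv$ could be proportional to $\vv$, or $\vv$ could lie in $\ker C\cap\ker K$-type configurations) deserves a short separate check, and I expect that to be the only place requiring care. Once that is settled, the case trichotomy is purely arithmetic in $\C$.
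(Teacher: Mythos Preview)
Your proof is correct and follows essentially the same route as the paper: both hinge on the identity $B^{2}=C^{2}-\omega^{2}\Id$, obtained from $K^{2}=-\Id$ and the anti-commutation $KC=-CK$, after which the trichotomy is straightforward arithmetic on $\mu^{2}=\lambda^{2}+\omega^{2}$. The only minor difference is that the paper packages both implications into the single determinant identity $\det(B-\lambda\Id)\det(B+\lambda\Id)=\det(C-\mu\Id)\det(C+\mu\Id)$ (using that both characteristic polynomials are even), whereas you split the equivalence, invoking the spectral mapping theorem for one direction and the $2\times 2$ block on $\mathrm{span}\{\vv,K\vv\}$ for the other; your concern about the degenerate case $\mu=0$ is therefore unnecessary, since the determinant argument already yields the full ``iff'' without needing $\vv$ and $K\vv$ to be independent.
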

\begin{proof}
Since $K^{2}=-\Id$ and $K M_\Gamma^{-1}D^{2}H(\vxi)=- M_\Gamma^{-1}D^{2}H(\vxi) K$ (as direct follows by applying Lemma \ref{thm:property-of-H}), then by a direct computation, we get: 
\begin{align}\label{eq:relation-of-stability-matrix}
B-\lambda^2 \Id&=(B-\lambda)(B+\lambda)=  \big[K(M_\Gamma^{-1}D^{2}H(\vxi)+\omega \Id)- \lambda \Id\big]\big[K(M_\Gamma^{-1}D^{2}H(\vxi)+\omega \Id)+\lambda \Id\big] \nonumber\\
&= K(M_\Gamma^{-1}D^{2}H(\vxi) + \omega \Id) K(M_\Gamma^{-1}D^{2}H(\vxi) + \omega \Id)-\lambda^2\Id= (KM_\Gamma^{-1}D^{2}H(\vxi) +K\omega \Id)^2 -\lambda^2 \Id \nonumber
\\&= M_\Gamma^{-1}D^{2}H(\vxi)^2 + KM_\Gamma^{-1}D^{2}H(\vxi) K\omega+ K\omega K M_\Gamma^{-1}D^{2}H(\vxi)-\lambda^2\Id-\omega^2\Id\\&= M_\Gamma^{-1}D^{2}H(\vxi)^2+ M_\Gamma^{-1}D^{2}H(\vxi) \omega -\omega M_\Gamma^{-1}D^{2}H(\vxi) -\lambda^2\Id-\omega^2\Id\\ &= M_\Gamma^{-1}D^{2}H(\vxi)^2 -(\lambda^2+\omega^2)\Id.
\end{align}
By the calculation performed in Equation \eqref{eq:relation-of-stability-matrix} we get that 
\[
0=\det(B-\lambda^2 \Id) \quad \textrm{  iff } \quad \det[M_\Gamma^{-1}D^{2}H(\vxi)^2-(\lambda^2+\omega^2)\Id]=\det[M_\Gamma^{-1}D^{2}H(\vxi)^2-\mu^2\Id]=0,
\] 
where $\mu^2\=\lambda^2 +\omega^2$. 
 
In order to prove the first claim, we start to observe that if  $\lambda \in \sigma(B) \cap i\R$ then $\lambda^2= \mu^2-\omega^2<0$. By this last inequality we can conclude that either $\mu^2 <0$ or $0<\mu^2\leq \omega^2$ or which is equivalent to state that $\mu \in \sigma(M_\Gamma^{-1}D^{2}H(\vxi)) \cap i\R$ or $\mu \in \sigma(M_\Gamma^{-1}D^{2}H(\vxi)) \cap \R$ and  $|\mu| \in \big[-|\omega|, |\omega|\big]$. Viceversa, if $\mu \in \sigma(M_\Gamma^{-1}D^{2}H(\vxi)) \cap i\R$, in particular  $\mu^2 <0$. Being $\mu^2= \lambda^2 +\omega^2<0$ and $\omega^2>0$ (being $\omega \in \R$) this implies that $\lambda^2<0$. Thus  $\lambda \in  i\R$ and by Equation \eqref{eq:relation-of-stability-matrix} $\lambda \in \sigma(M_\Gamma^{-1}D^{2}H(\vxi))$. This conclude the proof of the first item . The proof of items 2 and 3  can be proved by arguing precisely as above and we leave  the proof to the interested reader. 
\end{proof}


\subsection{Canonical forms and invariant splitting of the phase space}

This subsection is devoted to study the relation between the invariant subspaces of $B$ (which are crucial for reducing the operator $B$ and the generalized eigenspaces of $ A_\Gamma$).  Lemma \ref{thm:simple-eigenvalue block} that we state below for the sake of the reader, was proved  in \cite[Lemma 2.5]{Rob13}.
\begin{lem}\label{thm:simple-eigenvalue block}
 Let $p(\lambda)\= \det(B-\lambda \Id)$ be the characteristic polynomial of the stability matrix $B$.
\begin{itemize} 
\item[{\bf (a)}] Suppose that $\vv$ is a real eigenvector of $M_\Gamma^{-1}D^{2}H(\vxi)$ corresponding to the eigenvalue $\mu$. Then the span of the two vectors  $\{ \vv, K\vv \}$ is a real invariant subspace of $B$ and the restriction of $B$ to this subspace is given by 
\begin{equation}\label{eq:mat1}
\begin{bmatrix} 0 &\mu - \omega \\ \mu + \omega & 0 \\ 
\end{bmatrix}.  
\end{equation} 
Consequently, $p(\lambda)$ has a quadratic factor of the form $\lambda^2 + \omega^2 - \mu^2$.
\item[{\bf (b)}] 
Suppose that $\vv = \vv_1 + i \, \vv_2$ is a complex eigenvector of $M_\Gamma^{-1}D^{2}H(\vxi)$ corresponding to the complex eigenvalue $\mu = \alpha + i \, \beta$. Then  the span of the four vectors $\{ \vv_1,\vv_2, K\vv_1, K\vv_2 \}$ is a real invariant subspace of $B$ and the restriction of $B$ to this subspace is given by 
 \begin{equation}\label{eq:mat2} 
 \begin{bmatrix} 0 & 0 & \alpha - \omega & \beta \\ 0 & 0 & -\beta & \alpha - \omega \\ \alpha + \omega & \beta & 0 & 0 \\ -\beta &\alpha + \omega & 0 & 0 \end{bmatrix}.
 \end{equation} 
 Consequently, $p(\lambda)$ has a quartic factor of the form $(\lambda^2 + \omega^2 - \mu^2)(\lambda^2 + \omega^2 - \overline{\mu}^{\, 2})$. 
 \end{itemize}
\end{lem}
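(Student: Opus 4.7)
The plan is to exploit the anticommutation relation $KA = -AK$ where $A \= M_\Gamma^{-1}D^2H(\vxi)$ together with $K^2 = -\Id$. This anticommutation follows immediately from item (iii) of Lemma \ref{thm:property-of-H}, once I observe that $M_\Gamma$ commutes with $K$: indeed, $M_\Gamma$ is block-diagonal with each $2 \times 2$ block a scalar multiple of $\Id_2$, and each such block trivially commutes with $J$. Hence $M_\Gamma^{-1}$ also commutes with $K$, and $KA = KM_\Gamma^{-1}D^2H(\vxi) = M_\Gamma^{-1}KD^2H(\vxi) = -M_\Gamma^{-1}D^2H(\vxi) K = -AK$. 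From here, everything is bookkeeping.

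For case (a), I would compute directly $B\vv$ and $BK\vv$ using $A\vv = \mu\vv$. Writing $B = K(A + \omega \Id)$, one gets $B\vv = K(\mu + \omega)\vv = (\mu+\omega)K\vv$, and $B(K\vv) = K(A+\omega\Id)K\vv = K(-KA + \omega K)\vv = -K^2 A\vv + \omega K^2\vv = (\mu - \omega)\vv$ (using $AK = -KA$ and $K^2 = -\Id$). This proves both that $\mathrm{span}\{\vv, K\vv\}$ is $B$-invariant and that the restriction has matrix $\bigl[\begin{smallmatrix} 0 & \mu-\omega \\ \mu+\omega & 0 \end{smallmatrix}\bigr]$ in the basis $\{\vv, K\vv\}$. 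The characteristic polynomial of this block is $\lambda^2 - (\mu-\omega)(\mu+\omega) = \lambda^2 + \omega^2 - \mu^2$, which is the claimed quadratic factor of $p(\lambda)$.

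For case (b), writing $\mu = \alpha + i\beta$ and $\vv = \vv_1 + i\vv_2$, the identity $A\vv = \mu\vv$ decouples into its real and imaginary parts:
\begin{equation}
A\vv_1 = \alpha\vv_1 - \beta\vv_2, \qquad A\vv_2 = \beta\vv_1 + \alpha\vv_2.
\end{equation}
Applying $AK = -KA$ then gives $AK\vv_1 = -\alpha K\vv_1 + \beta K\vv_2$ and $AK\vv_2 = -\beta K\vv_1 - \alpha K\vv_2$. I would then compute $B\vv_1, B\vv_2, BK\vv_1, BK\vv_2$ by exactly the same manipulation as in case (a) (apply $A + \omega\Id$, premultiply by $K$, use $K^2 = -\Id$ to push $K^2$ factors through to the $\vv_j$). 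The result expresses each $B\vv_j$ as a linear combination of $K\vv_1, K\vv_2$, and each $BK\vv_j$ as a linear combination of $\vv_1, \vv_2$, showing invariance of $\mathrm{span}\{\vv_1, \vv_2, K\vv_1, K\vv_2\}$ and giving precisely the anti-block-diagonal matrix in \eqref{eq:mat2}.

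For the quartic factor, I would write the $4\times 4$ matrix as $\bigl[\begin{smallmatrix}0 & C \\ D & 0\end{smallmatrix}\bigr]$ with $C = \bigl[\begin{smallmatrix}\alpha-\omega & \beta \\ -\beta & \alpha-\omega\end{smallmatrix}\bigr]$ and $D = \bigl[\begin{smallmatrix}\alpha+\omega & \beta \\ -\beta & \alpha+\omega\end{smallmatrix}\bigr]$, so its characteristic polynomial equals $\det(\lambda^2\Id - DC)$. A direct computation of $DC$ and expansion yield $(\lambda^2 + \omega^2)^2 - 2(\alpha^2 - \beta^2)(\lambda^2 + \omega^2) + (\alpha^2 + \beta^2)^2$, which factors as $(\lambda^2 + \omega^2 - \mu^2)(\lambda^2 + \omega^2 - \bar\mu^2)$, as claimed. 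No real obstacle arises; the only delicate point is the index bookkeeping in case (b) to match the sign conventions of the matrix \eqref{eq:mat2}, but the anticommutation $KA = -AK$ and $K^2 = -\Id$ make each step mechanical.
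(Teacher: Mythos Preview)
Your proof is correct and follows essentially the same approach as the paper, which simply defers to \cite[Lemma~2.5]{Rob13} and notes that it is a direct computation using Lemma~\ref{thm:simple-eigenvalue block-1}. Your argument makes this direct computation explicit: the anticommutation $KA=-AK$ you derive is exactly the content of Lemma~\ref{thm:simple-eigenvalue block-1} (that $K\vv$ is an eigenvector for $-\mu$ whenever $\vv$ is one for $\mu$), and the rest is the same bookkeeping Roberts carries out.
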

\begin{proof}
The proof of this result follows by a direct computation by using Lemma \ref{thm:simple-eigenvalue block-1}. (Cf. \cite[Lemma 2.5]{Rob13}).
\end{proof}
In the case of mixed signs circulations, the matrix $M_\Gamma^{-1}D^{2}H(\vxi)$ is $M_\Gamma$-symmetric with respect to an indefinite scalar product and this, among others, in particular implies that the spectrum is not real and $M_\Gamma^{-1}D^{2}H(\vxi)$ and hence $ A_\Gamma$ are not semi-simple. In order  to decompose the full space into $B$ invariant subspaces it is then crucial to understand in which manner  Lemma  \ref{thm:simple-eigenvalue block} can be carried over in this more general situation we are dealing with.  This is essentially the content of Lemma   \ref{thm:conjugated-jordan-block} and Proposition \ref{thm:general eigenvalue block}, below.
\begin{lem}\label{thm:conjugated-jordan-block}
Let $\{\vv_{i}\}_{i=0}^{l}$ be a Jordan
chain of $A_\Gamma$ with eigenvalue $\nu$, namely 
\begin{equation}
\begin{cases}
A_\Gamma\,\vv_{i+1}=\nu \,\vv_{i+1}+\vv_{i}\\
 \vv_{0}=0.
\end{cases}
\end{equation}
Then, the set  $\{K\vv_{i}\}_{i=0}^{l}$ is a Jordan
chain of $A_\Gamma$ with eigenvalue $2\omega-\nu$; thus 
\begin{equation}
\begin{cases}
A_\Gamma K\vv_{i+1}=(2\omega-\nu)\,K\vv_{i+1}-K\vv_{i}
\\
 \vv_{0}=0.
\end{cases}
\end{equation}
\end{lem}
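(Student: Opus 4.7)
My plan is to derive a single algebraic identity relating $A_\Gamma$ and $K$, and then push it through the Jordan chain relations termwise.

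The starting point is item (iii) of Lemma \ref{thm:property-of-H}, which says $D^{2}H(\vxi)K = -KD^{2}H(\vxi)$. Since $M_\Gamma$ is block-diagonal with scalar $2\times 2$ blocks $\Gamma_i\,\Id_2$, it commutes with $K=\Id_N\otimes J$, so $M_\Gamma^{-1}D^{2}H(\vxi)$ also anticommutes with $K$. Writing $A_\Gamma = M_\Gamma^{-1}D^{2}H(\vxi)+\omega\,\Id$, I get
\begin{equation}
A_\Gamma K \;=\; M_\Gamma^{-1}D^{2}H(\vxi)K + \omega K \;=\; -KM_\Gamma^{-1}D^{2}H(\vxi) + \omega K \;=\; K\bigl(2\omega\,\Id - A_\Gamma\bigr),
\end{equation}
which is the identity I will use throughout. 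Equivalently, conjugation by $K$ sends $A_\Gamma$ to $2\omega\,\Id - A_\Gamma$ (up to a sign from $K^{-1}=-K$).

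Given the hypothesis $A_\Gamma\vv_{i+1} = \nu\,\vv_{i+1}+\vv_{i}$ with $\vv_0=0$, I apply the identity above to $\vv_{i+1}$:
\begin{equation}
A_\Gamma(K\vv_{i+1}) \;=\; K\bigl(2\omega\,\Id - A_\Gamma\bigr)\vv_{i+1} \;=\; 2\omega\,K\vv_{i+1} - K\bigl(\nu\,\vv_{i+1}+\vv_{i}\bigr) \;=\; (2\omega-\nu)\,K\vv_{i+1} - K\vv_{i}.
\end{equation}
This is exactly the required Jordan-chain relation for $\{K\vv_i\}_{i=0}^{l}$ with eigenvalue $2\omega-\nu$ (note the sign convention in the statement has $-K\vv_i$ on the right, matching what I just obtained), and the base case $\vv_0=0$ trivially yields $K\vv_0=0$.

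The only point worth checking is that the transformed chain is genuinely a Jordan chain in the sense of being linearly independent, but this is automatic: $K$ is invertible (in fact $K^2=-\Id$), so $\{K\vv_i\}$ has the same length and rank as $\{\vv_i\}$. There is no real obstacle here; the statement is essentially a packaging of the anticommutation property (iii) at the level of generalized eigenvectors, and the proof is two lines once the identity $A_\Gamma K = K(2\omega\,\Id-A_\Gamma)$ has been recorded.
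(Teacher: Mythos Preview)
Your proof is correct and follows essentially the same route as the paper: both use the anticommutation $M_\Gamma^{-1}D^2H(\vxi)K=-KM_\Gamma^{-1}D^2H(\vxi)$ (from item (iii) of Lemma~\ref{thm:property-of-H} together with $M_\Gamma K=KM_\Gamma$) to compute $A_\Gamma K\vv_{i+1}$ directly, arriving at $(2\omega-\nu)K\vv_{i+1}-K\vv_i$. The only cosmetic difference is that you first record the operator identity $A_\Gamma K=K(2\omega\,\Id-A_\Gamma)$ before applying it, whereas the paper performs the rewriting inline on $\vv_{i+1}$.
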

\begin{proof}
Since $KM_\Gamma^{-1}D^{2}H(\vxi)=-M_\Gamma^{-1}D^{2}H(\vxi) K$ (as directly follows by the third item in Lemma \ref{thm:property-of-H}), then  by a direct computation, we get  
\begin{align*}
\big[M_\Gamma^{-1}D^{2}H(\vxi)+\omega \Id\big]K\vv_{i+1}= & K(-M_\Gamma^{-1}D^{2}H(\vxi)+\omega I)\vv_{i+1}\\
= & K(-M_\Gamma^{-1}D^{2}H(\vxi)-\omega \Id+2\omega I)\vv_{i+1}\\
= & -KA_\Gamma\vv_{i+1}+2\omega K\vv_{i+1}\\
= & (2\omega-\nu)K\vv_{i+1}-K\vv_{i}.
\end{align*}
This concludes the proof. 
\end{proof}
Lemma 	\ref{thm:conjugated-jordan-block} provides a constructive way to reduce the operator $B$ (by decomposing the whole space into $B$-invariant subspaces).
\begin{prop}\label{thm:general eigenvalue block}
Let $\{\vv_{i}\}_{i=0}^{l}$ be a Jordan
chain of $ A_\Gamma$ with eigenvalue $\nu$. Then the span of $\Set{\vv_{i},K\vv_{i}}_{i=1}^{l}$ is an invariant space for  $B$.
\end{prop}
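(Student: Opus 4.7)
The plan is to verify directly that the proposed subspace $U := \operatorname{span}\{\vv_i, K\vv_i\}_{i=1}^{l}$ is closed under the action of $B = KA_\Gamma$. Since $U$ is spanned by two families of vectors, it suffices to show that for every $i \in \{1,\dots,l\}$ both $B\vv_i$ and $B(K\vv_i)$ lie in $U$. I expect no real obstacle here: the point is simply to chain together the Jordan relation for $\{\vv_i\}$ with its counterpart for $\{K\vv_i\}$ furnished by Lemma~\ref{thm:conjugated-jordan-block}, using the identity $K^2 = -\Id$.

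First I would handle the vectors $\vv_i$. By definition of $B$ and of a Jordan chain for $A_\Gamma$,
\[
B\vv_i \;=\; K A_\Gamma \vv_i \;=\; K(\nu\,\vv_i + \vv_{i-1}) \;=\; \nu\,K\vv_i + K\vv_{i-1},
\]
with the convention $\vv_0 = 0$ (so the last term drops when $i=1$). Both summands are manifestly in $U$.

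Next I would handle the vectors $K\vv_i$, invoking Lemma~\ref{thm:conjugated-jordan-block}, which says that $\{K\vv_i\}$ is itself a Jordan chain for $A_\Gamma$ with eigenvalue $2\omega - \nu$ and with a sign-flipped off-diagonal entry, namely $A_\Gamma K\vv_i = (2\omega - \nu)K\vv_i - K\vv_{i-1}$. Combining this with $K^2 = -\Id$ yields
\[
B(K\vv_i) \;=\; K A_\Gamma K\vv_i \;=\; K\bigl[(2\omega - \nu)K\vv_i - K\vv_{i-1}\bigr] \;=\; -(2\omega-\nu)\,\vv_i + \vv_{i-1},
\]
which again lies in $U$. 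Since $B$ sends each of the $2l$ generators of $U$ into $U$, the subspace $U$ is $B$-invariant, which is precisely the statement of the proposition. As a byproduct, the matrix of $B|_U$ in the ordered basis $(\vv_1, K\vv_1,\dots,\vv_l, K\vv_l)$ is a block-bidiagonal matrix whose $2\times 2$ diagonal blocks are $\left[\begin{smallmatrix} 0 & -(2\omega-\nu) \\ \nu & 0 \end{smallmatrix}\right]$ and whose subdiagonal blocks are $\left[\begin{smallmatrix} 0 & 1 \\ 1 & 0 \end{smallmatrix}\right]$, generalising the $2\times 2$ block of Lemma~\ref{thm:simple-eigenvalue block}(a) to Jordan chains of arbitrary length.
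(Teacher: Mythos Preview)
Your proof is correct and follows essentially the same route as the paper: both compute $B\vv_i$ and $BK\vv_i$ directly from the Jordan relation and Lemma~\ref{thm:conjugated-jordan-block} (together with $K^2=-\Id$) to conclude that each generator is sent back into the span. One small slip in your closing remark: in the ordered basis $(\vv_1,K\vv_1,\dots,\vv_l,K\vv_l)$ the off-diagonal $2\times2$ blocks $\left[\begin{smallmatrix}0&1\\1&0\end{smallmatrix}\right]$ sit in the \emph{super}diagonal positions (block $(i-1,i)$), not the subdiagonal ones, since $B\vv_i$ and $BK\vv_i$ feed into the $(i-1)$-indexed pair.
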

\begin{proof}
Since $M_\Gamma^{-1}D^{2}H(\vxi)$ and $K$ anti-commutes, namely  $KM_\Gamma^{-1}D^{2}H(\vxi)=-M_\Gamma^{-1}D^{2}H(\vxi) K$, by a direct computation of  $B\vv_{i+1}$ and
$BK\vv_{i+1}$ and by taking advantage of Lemma 	\ref{thm:conjugated-jordan-block},  we immediately get  
\[
\begin{cases}
	B\vv_{i+1}=\nu\,  K\vv_{i+1}+K\vv_{i}\\
	BK\vv_{i+1}=(\nu -2\omega)\vv_{i+1}+\vv_{i}.
\end{cases}
\]
These two last equalities imply that the subspace generated by  $\Set{\vv_{i},K\vv_{i}}_{i=1}^{l}$ is an invariant space of $B$. This concludes the proof. 
\end{proof}
\begin{note}
We introduce the following 
\begin{itemize}
\item[] \begin{equation}
\varGamma_p(a,s)\=\begin{bmatrix} 
a & s & \cdots & 0 & 0\\
0 & a& \cdots & 0 & 0\\
\vdots & \vdots & \ddots & \vdots & \vdots\\
0 & 0 & \cdots & a & s\\
0 & 0 & \cdots & 0 & a
\end{bmatrix},
\end{equation}where $p$ denotes by the order of this matrix.
 \item[] If $\nu\in\sigma(A_\Gamma)$ we denote by $E_\nu$ the (real) generalized spectral space  corresponding to the eigenvalue $\nu$.
 \end{itemize}
\end{note}
Directly from Proposition \ref{thm:general eigenvalue block} and by using notation above,  we get that the restriction of $B$ onto the subspace $\widetilde E_\nu\=E_\nu \oplus E_{\nu- 2\omega}$ can be represented by the following $2l\times 2l$ matrix that in block matrix form can be written as follows
\begin{equation}
B\Big\vert_{\widetilde E_\nu}= \begin{bmatrix}
 0_l&  \Gamma_l(\nu,1)\\ 
\Gamma_l(\nu -2\omega,-1) & 0_l
\end{bmatrix}.
\end{equation}


{{\subsection{$A_\Gamma$-invariant $M_\Gamma$-orthogonal decomposition }}   }

Let us start to introduce the following symmetric matrix 
\begin{equation}\label{def:widehat-A}
\widehat A_\Gamma\=M_\Gamma A_\Gamma=D^{2}H(\vxi)+\omega M_\Gamma.
\end{equation}
 The rest of this section a bit technical in its own and the basic idea behind is to establish the behavior of the restriction of  $\widehat A$ onto some subspaces constructed through the spectral subspaces (maybe generalized spectral subspaces) of 
		$A_\Gamma$ which, as consequence of Corollary \ref{eq:orthogonal}, are $\widehat A_\Gamma$-orthogonal.

The next result provide a sufficient condition in order the generalized spectral subspaces relative to different and not conjugated eigenvalues to be $M_\Gamma$-orthogonal.
\begin{lem}\label{thm:orthogonal-1}
Suppose that $\nu_{1},\nu_{2}\in\sigma(A_\Gamma)$. If $\nu_{1}\neq\overline{\nu_{2}}$, then  for every $\vv_{1}\in E_{\nu_{1}}$ and $\vv_{2}\in E_{\nu_{2}}$ we have 
\[
\left\langle \vv_1, \vv_2\right\rangle_{M_\Gamma}=\left\langle M_\Gamma \vv_{1},\vv_{2}\right\rangle =0.
\]
\end{lem}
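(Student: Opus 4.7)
The plan is to transcribe the classical Krein-space statement ``a symmetric operator on a scalar-product space has mutually orthogonal generalized eigenspaces for distinct eigenvalues'' into our setting, handling the real/complex dichotomy by a routine complexification.

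First I would record that $\widehat{A}_\Gamma = M_\Gamma A_\Gamma = D^2H(\vxi) + \omega M_\Gamma$ is Euclidean symmetric (as noted after Definition~\ref{def:N-symmetry}); taking the transpose gives $\trasp{A_\Gamma} M_\Gamma = M_\Gamma A_\Gamma$, which is exactly the $M_\Gamma$-symmetry
\[
\langle A_\Gamma \vu, \vw\rangle_{M_\Gamma} = \langle \vu, A_\Gamma \vw\rangle_{M_\Gamma}, \qquad \forall\, \vu, \vw \in \R^{2N}.
\]
Since both $A_\Gamma$ and $M_\Gamma$ are real, this identity persists verbatim when $\langle\cdot,\cdot\rangle_{M_\Gamma}$ is extended to the $\C$-bilinear (not Hermitian) pairing $\langle \vu, \vw\rangle_{M_\Gamma, \C} \= \trasp{\vw} M_\Gamma \vu$ on $\C^{2N}$.

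The core step would be the complex analogue: introducing temporary notation $E^\C_{\mu}$ for the complex generalized eigenspace associated with $\mu \in \sigma(A_\Gamma)$, I would show that for distinct $\mu_1, \mu_2 \in \sigma(A_\Gamma)$ and $\vu \in E^\C_{\mu_1}$, $\vw \in E^\C_{\mu_2}$, one has $\langle \vu, \vw\rangle_{M_\Gamma, \C} = 0$. Choosing $k$ with $(A_\Gamma - \mu_1 \Id)^k \vu = 0$ and iterating $M_\Gamma$-symmetry,
\[
0 = \langle (A_\Gamma - \mu_1 \Id)^k \vu, \vw\rangle_{M_\Gamma, \C} = \langle \vu, (A_\Gamma - \mu_1 \Id)^k \vw\rangle_{M_\Gamma, \C}.
\]
On $E^\C_{\mu_2}$ the decomposition $A_\Gamma - \mu_1 \Id = (A_\Gamma - \mu_2 \Id) + (\mu_2 - \mu_1)\Id$ exhibits this operator as the sum of a nilpotent one and a nonzero scalar, hence invertible; thus $(A_\Gamma - \mu_1 \Id)^k$ maps $E^\C_{\mu_2}$ bijectively onto itself, and letting $\vw$ range over $E^\C_{\mu_2}$ concludes the step.

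To return to the real generalized eigenspaces I would invoke the standard identifications $E_\nu \otimes_\R \C = E^\C_\nu$ when $\nu \in \R$ and $E_\nu \otimes_\R \C = E^\C_\nu \oplus E^\C_{\overline{\nu}}$ when $\nu \in \C\setminus\R$. Decomposing the complexifications of $\vv_1 \in E_{\nu_1}$ and $\vv_2 \in E_{\nu_2}$ accordingly, the pairing $\langle \vv_1, \vv_2\rangle_{M_\Gamma}$ splits into at most four terms of the form $\langle \cdot,\cdot\rangle_{M_\Gamma,\C}$ between components lying in $E^\C_\alpha$ and $E^\C_\beta$ with $\alpha \in \{\nu_1, \overline{\nu_1}\}$ and $\beta \in \{\nu_2, \overline{\nu_2}\}$. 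The hypothesis $\nu_1 \neq \overline{\nu_2}$, together with the tacit $\nu_1 \neq \nu_2$ underlying the statement, guarantees that every such pair $(\alpha,\beta)$ consists of distinct complex numbers, so each summand vanishes by the previous paragraph and the result follows.

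The main obstacle is not analytic but notational: one must keep careful track of real versus complex generalized eigenspaces and verify that the complex-bilinear extension of the indefinite $M_\Gamma$-pairing genuinely inherits the $A_\Gamma$-symmetry (which it does precisely because $M_\Gamma$ and $A_\Gamma$ have real entries). Beyond this bookkeeping the argument is an exact transcription of the elementary symmetric-operator proof into the $M_\Gamma$-symmetric (Krein-space) framework.
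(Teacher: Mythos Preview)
Your core device—writing $(A_\Gamma-\mu_1\Id)^k\vu=0$, moving the operator across by $M_\Gamma$-symmetry, and noting that $A_\Gamma-\mu_1\Id$ restricted to $E^{\C}_{\mu_2}$ is ``nilpotent plus nonzero scalar'' hence invertible—is correct and is sleeker than the paper's route, which fixes explicit Jordan chains $\{\vv_1^i\}$, $\{\vv_2^j\}$ and runs an induction on the total chain-index $i+j$.

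The gap is in the surrounding framework. In this paper $E_\nu$ denotes the \emph{complex} generalized eigenspace (the parenthetical ``(real)'' in the Notation is misleading; note the $\overline{\nu}_2$ that appears when the paper moves $A_\Gamma$ across in its Step~1, and the genuine direct sum $I_\nu=E_\nu\oplus E_{\bar\nu}$ for $\nu\notin\R$), and the form $\langle M_\Gamma\cdot,\cdot\rangle$ is extended \emph{Hermitianly} to $\C^{2n}$. Your assertion that ``$\nu_1\neq\nu_2$ is tacit'' is therefore wrong: the lemma is invoked verbatim with $\nu_1=\nu_2\notin\R$ in Lemma~\ref{thm:positive =negative} to obtain the off-diagonal block form $\left[\begin{smallmatrix}0&Y\\ \trasp{Y}&0\end{smallmatrix}\right]$ on $I_\nu$. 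Under your real-eigenspace/bilinear reading that instance of the lemma is simply false (e.g.\ $A_\Gamma=J$, $M_\Gamma=\Id_2$, $\nu=i$), so your final decomposition step does not recover the statement actually needed.

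The repair is one line: run the same invertibility argument with the Hermitian extension. Since $\widehat{A}_\Gamma=M_\Gamma A_\Gamma$ is real symmetric it is Hermitian self-adjoint, so
\[
0=\big\langle (A_\Gamma-\nu_1\Id)^k\vv_1,\vv_2\big\rangle_{M_\Gamma}
=\big\langle \vv_1,(A_\Gamma-\overline{\nu_1}\,\Id)^k\vv_2\big\rangle_{M_\Gamma},
\]
and on $E_{\nu_2}$ the operator $A_\Gamma-\overline{\nu_1}\,\Id=(A_\Gamma-\nu_2\Id)+(\nu_2-\overline{\nu_1})\Id$ is nilpotent plus a nonzero scalar precisely because $\nu_1\neq\overline{\nu_2}$. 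No real/complex splitting, no extra hypothesis.
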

\begin{proof}
We split the proof into two steps.\\ 
\underline{First step}. We assume that   $\vv_{1},\vv_{2}$ are eigenvectors relative to the eigenvalues $\nu_1$ and $\nu_2$ respectively; thus, we have  
$A_\Gamma\vv_{i}=\nu_{i}\vv_{i},\ i=1,2$. So, we have  
\begin{multline*}
\left\langle \widehat A_\Gamma\vv_{1},\vv_{2}\right\rangle =\left\langle M_\Gamma A_\Gamma\vv_{1},\vv_{2}\right\rangle =  \left\langle M_\Gamma\nu_{1}\vv_{1},\vv_{2}\right\rangle 
=  \nu_{1}\left\langle M_\Gamma\vv_{1},\vv_{2}\right\rangle \textrm{ and }\\
\left\langle \widehat{A}\vv_{1},\vv_{2}\right\rangle =\left\langle \vv_{1},M_\Gamma A_\Gamma \vv_{2}\right\rangle  
= \left\langle \vv_{1},\nu_2 M_\Gamma \vv_{2}\right\rangle
=  \overline \nu_2 \left\langle \vv_1,M_\Gamma\vv_{2}\right\rangle,
\end{multline*}
since $\nu_{1}\neq\overline{\nu_{2}},$ we get desired result.\\
\underline{Second step}. We assume that   $\vv_{1},\vv_{2}$ are  generalized eigenvectors and we consider  the Jordan chains
\begin{align*}
A_\Gamma\vv_{1}^{i+1}= & \nu_{1}\vv_{1}^{i+1}+\vv_{1}^{i},\qquad  \forall \, i \in \{0,1, \dots, p\} \\
A_\Gamma\vv_{2}^{j+1}= & \nu_{2}\vv_{2}^{j+1}+\vv_{2}^{j},\qquad \forall \, j \in \{0,1,\dots, q\}
\end{align*}
where $\vv_{1}^{0}=\vv_{2}^{0}=0$. By arguing as above, we get that 
\begin{multline}\label{eq:organal-1}
\left\langle M_\Gamma A_\Gamma \vv_{1}^{i+1},\vv_{2}^{j+1}\right\rangle =  \left\langle M_\Gamma\nu_{1}\vv_{1}^{i+1}+M_\Gamma\vv_{1}^{i},\vv_{2}^{j+1}\right\rangle  \\
=  \nu_{1}\left\langle M_\Gamma\vv_{1}^{i+1},\vv_{2}^{j+1}\right\rangle +\left\langle M_\Gamma\vv_{1}^{i},\vv_{2}^{j+1}\right\rangle. \textrm{ Moreover }\\ 
  \left\langle\vv_{1}^{i+1}, M_\Gamma A_\Gamma\vv_{2}^{j+1}\right\rangle 
=  \overline{\nu}_{2}\left\langle M_\Gamma\vv_{1}^{i+1},\vv_{2}^{j+1}\right\rangle +\left\langle M_\Gamma\vv_{1}^{i},\vv_{2}^{j+1}\right\rangle.
\end{multline}
So, taking the difference between the two equalities in Equation \eqref{eq:organal-1}, we get 
\begin{multline}\label{eq:orginal-2}
0=\left\langle M_\Gamma A_\Gamma \vv_{1}^{i+1},\vv_{2}^{j+1}\right\rangle-\left\langle\vv_{1}^{i+1}, M_\Gamma A_\Gamma\vv_{2}^{j+1}\right\rangle \\= (\nu_{1}-\overline{\nu}_{2})\left\langle M_\Gamma \vv_{1}^{i+1},\vv_{2}^{j+1}\right\rangle +\left\langle M_\Gamma \vv_{1}^{i},\vv_{2}^{j+1}\right\rangle -\left\langle M_\Gamma \vv_{1}^{i},\vv_{2}^{j+1}\right\rangle\\= (\nu_{1}-\overline{\nu}_{2})\left\langle M_\Gamma \vv_{1}^{i+1},\vv_{2}^{j+1}\right\rangle
\end{multline}
where the first equality follows by the fact that $A_\Gamma$ is $M_\Gamma$-symmetric and $\trasp{M_\Gamma}=M_\Gamma$.
Since $\nu_1 -\overline \nu_2 \neq 0$, it follows that  
\begin{equation}\label{eq:orginal-3}
\left\langle M_\Gamma \vv_{1}^{i+1},\vv_{2}^{j+1}\right\rangle =0 \quad \forall \, i \in \{0,1, \dots, p\} \textrm{ and } \forall \, j \in \{0,1, \dots, q\}.
\end{equation}
To conclude the proof we argue by induction. 
Let $i+j=k$. So  Equation \eqref{eq:orginal-3} is trivially true for$k=0$. Now, we suppose Equation \eqref{eq:orginal-3} holds true for $i+j=k\leq l$ and we want to prove that it is true for  $i+j=k=l+1$. Now,  by taking into account  Equation \eqref{eq:orginal-2} and being   $\nu_{1}\neq\overline{\nu}_{2}$, it readily follows that Equation  \eqref{eq:orginal-3}
holds true. 
This concludes the proof.  
\end{proof}
In particular generalized eigenspaces relative to different and not conjugated eigenvalues are $\hat{A}_\Gamma$ orthogonal.
\begin{cor} \label{eq:orthogonal}
Suppose that $\nu_{1},\nu_{2}\in\sigma(A_\Gamma)$. If $\nu_{1}\neq\overline{\nu_{2}}, $ then we have $\left\langle \widehat A_\Gamma \vv_{1},\vv_{2}\right\rangle =0$   for all $\vv_{1}\in E_{\nu_{1}}$and $\vv_{2}\in E_{\nu_{2}}$. 
\end{cor}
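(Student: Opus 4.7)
The plan is to reduce the claim about $\widehat A_\Gamma$ to the claim about $M_\Gamma$ already established in Lemma \ref{thm:orthogonal-1}. The key observation is that the generalized eigenspace $E_{\nu_1}$ is, by definition, invariant under $A_\Gamma$, so multiplying $\vv_1 \in E_{\nu_1}$ by $A_\Gamma$ produces another vector in $E_{\nu_1}$. Since $\widehat A_\Gamma = M_\Gamma A_\Gamma$ by Equation \eqref{def:widehat-A}, we can rewrite
\[
\left\langle \widehat A_\Gamma \vv_1, \vv_2 \right\rangle = \left\langle M_\Gamma (A_\Gamma \vv_1), \vv_2 \right\rangle,
\]
and apply Lemma \ref{thm:orthogonal-1} to the pair $(A_\Gamma \vv_1, \vv_2)$, which lies in $E_{\nu_1} \times E_{\nu_2}$, with $\nu_1 \neq \overline{\nu_2}$ by hypothesis. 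The conclusion is immediate.

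There is essentially no obstacle here: the corollary is a one-line consequence of the invariance of generalized eigenspaces together with Lemma \ref{thm:orthogonal-1}. The only thing worth noting explicitly in the write-up is the invariance $A_\Gamma (E_{\nu_1}) \subseteq E_{\nu_1}$, which is standard for generalized eigenspaces of a linear operator. So my proposal is to state the identity $\widehat A_\Gamma \vv_1 = M_\Gamma (A_\Gamma \vv_1)$, observe that $A_\Gamma \vv_1 \in E_{\nu_1}$, and invoke Lemma \ref{thm:orthogonal-1}.
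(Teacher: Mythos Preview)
Your proposal is correct and matches the paper's intent: the paper states this result as an immediate corollary of Lemma~\ref{thm:orthogonal-1} without giving an explicit proof, and your argument---rewriting $\langle \widehat A_\Gamma \vv_1,\vv_2\rangle = \langle M_\Gamma(A_\Gamma \vv_1),\vv_2\rangle$ and invoking the invariance $A_\Gamma(E_{\nu_1})\subseteq E_{\nu_1}$---is exactly the one-line deduction the paper has in mind.
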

For $\nu\in\sigma(A_\Gamma)$, we set
\[
I_{\nu}\=\begin{cases}
E_{\nu}\oplus E_{\bar{\nu}}& \textrm{ if }\nu\notin \R \\
E_{\nu} & \textrm{ otherwise}.
\end{cases}
\]

\begin{note}
We introduce the following notation. Given any subspace $X\subset \C^{2n},$ we denote by $ \iMor(\widehat A_\Gamma|_{X})$  (resp. $\coiMor(\widehat A_\Gamma|_{X})$), the dimension of the  maximal negative (resp.  positive) spectral subspace of the restriction of the quadratic form $\left\langle \widehat A_\Gamma\cdot,\cdot\right\rangle $ onto $X$. 
\end{note}
By the previous discussion, we can decompose the $\C^{2n}= \R^{2n}\otimes_\C \C$ into $A_\Gamma$-{invariant } , $M_\Gamma$-orthogonal subspaces; thus we have $\C^{2n}=I_{\nu_{1}}\oplus\cdots\oplus I_{\nu_{l}},$ where
$\nu_{i}$ are all distinct eigenvalues of $A_\Gamma$ with $\Im\nu_{i}\geq 0$. 

\begin{lem}\label{thm:non-degenerate}
Let $\nu\in\sigma(A_\Gamma)$
and assume $\nu\neq0$. Then the restriction 
\[
\left\langle \widehat A_\Gamma \cdot,\cdot\right\rangle |_{I_{\nu}}
\]
is non-degenerate.
\end{lem}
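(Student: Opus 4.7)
The plan is to deduce non-degeneracy on $I_\nu$ from the invertibility of $A_\Gamma$ restricted to $I_\nu$, using the orthogonal splitting $\C^{2n}=\bigoplus_{j}I_{\nu_j}$ provided by Lemma \ref{thm:orthogonal-1} and Corollary \ref{eq:orthogonal}.

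First I would record two structural remarks: (i) $\widehat{A}_\Gamma=M_\Gamma A_\Gamma$ is a (Euclidean-)symmetric matrix, since $A_\Gamma$ is $M_\Gamma$-symmetric and $M_\Gamma$ is diagonal; and (ii) because $M_\Gamma$ is invertible, $\ker\widehat{A}_\Gamma=\ker A_\Gamma$, and every vector in $\ker A_\Gamma$ is a (genuine, not just generalized) $0$-eigenvector of $A_\Gamma$, so $\ker\widehat{A}_\Gamma\subseteq E_0$.

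Next, I would take $\vv\in I_\nu$ in the radical of the restricted form, i.e., assume $\langle \widehat{A}_\Gamma\vv,\vw\rangle=0$ for every $\vw\in I_\nu$. For any other index $\nu_j$ appearing in the decomposition, with $\{\nu_j,\overline{\nu_j}\}\neq\{\nu,\overline{\nu}\}$, the four combinations $(\nu,\nu_j)$, $(\nu,\overline{\nu_j})$, $(\overline{\nu},\nu_j)$, $(\overline{\nu},\overline{\nu_j})$ all satisfy the hypothesis $\nu_1\neq\overline{\nu_2}$ of Lemma \ref{thm:orthogonal-1}; hence $\langle \widehat{A}_\Gamma\vv,\vu\rangle=0$ for all $\vu\in I_{\nu_j}$ as well. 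Summing over the decomposition yields $\langle \widehat{A}_\Gamma\vv,\vu\rangle=0$ for every $\vu\in\C^{2n}$, so $\widehat{A}_\Gamma\vv=0$, i.e., $\vv\in\ker A_\Gamma\subseteq E_0$.

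Finally, since the generalized eigenspaces of $A_\Gamma$ form a direct sum, $I_\nu\cap E_0=\{0\}$ whenever $\nu\neq 0$ (and $\overline{\nu}\neq 0$, which is automatic). Consequently $\vv=0$, proving that the restriction of $\langle \widehat{A}_\Gamma\cdot,\cdot\rangle$ to $I_\nu$ has trivial radical, hence is non-degenerate.

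The only delicate point is the bookkeeping in the complex case $\nu\notin\R$, where $I_\nu=E_\nu\oplus E_{\overline\nu}$ has dimension up to $2\dim E_\nu$ and one must check that none of the four cross pairs with $I_{\nu_j}$ violates the hypothesis of Lemma \ref{thm:orthogonal-1}; this is a short verification based on the definition $I_{\nu_j}\neq I_\nu\Longleftrightarrow\{\nu_j,\overline{\nu_j}\}\cap\{\nu,\overline{\nu}\}=\emptyset$. The rest of the argument is a direct consequence of the direct-sum decomposition and the invertibility of $M_\Gamma$.
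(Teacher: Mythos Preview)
Your proposal is correct and follows essentially the same route as the paper: both arguments use the $\widehat A_\Gamma$-orthogonal decomposition $\C^{2n}=\bigoplus_j I_{\nu_j}$ (Lemma~\ref{thm:orthogonal-1}/Corollary~\ref{eq:orthogonal}) to promote a vector in the radical of $\langle\widehat A_\Gamma\cdot,\cdot\rangle|_{I_\nu}$ to an element of $\ker\widehat A_\Gamma\subseteq E_0$, and then conclude from $I_\nu\cap E_0=\{0\}$. The only cosmetic differences are that you phrase it directly rather than by contradiction and spell out the complex-eigenvalue bookkeeping more carefully than the paper does.
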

\begin{proof}
We start to observe that
  $\ker\widehat A_\Gamma\subseteq E_{0}=\ker A_\Gamma$ \footnote{Actually if $M_\Gamma$    is positive definite it can be proved that also the converse inclusion holds}. Now, arguing by contradiction,  we assume that  $\left\langle \widehat A_\Gamma \cdot,\cdot\right\rangle |_{I_{\nu}}$ is
degenerate for some $\nu \neq 0$. Thus  there exists $ \vu\in I_{\nu}$ and $\vu 0\neq$, such that 
\[
\left\langle\widehat A_\Gamma\vu,\vv\right\rangle =0\qquad for all \vv\in I_{\nu}.
\]
 Since $\C^{2n}$is the direct sum of all different
$I_{\nu'}$, where $\nu'\in\sigma(A_\Gamma)$, this implies (by invoking by Lemma \ref{thm:orthogonal-1})
that $\left\langle \widehat A_\Gamma\vu,\vv\right\rangle =0$ for
all $\vv\in \C^{2n}$. So $\vu \in \ker \widehat A_\Gamma$ and in hence $\vu \in \ker A_\Gamma= E_0$.  Thus $\nu=0$ which is  a contradiction. This concludes the proof.
\end{proof}
The next result shed some light on the relation between the dimension of $I_\nu$ and the Morse index .

\begin{lem} \label{thm:positive =negative}
Let $\nu \in \sigma(A_\Gamma)$,  $\Im \nu>0$ and let $m_{\nu} \in \N$ be its algebraic multiplicity. Then 
\[
\iMor\left[\widehat A_\Gamma|_{I_{\nu}}\right]=m_{\nu}.
\]
\end{lem}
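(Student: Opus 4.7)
The plan is to exploit the structure of the Hermitian form $\langle \widehat{A}_\Gamma \cdot,\cdot\rangle$ on $I_\nu = E_\nu\oplus E_{\bar\nu}$, showing that $E_\nu$ is an isotropic subspace of half the total dimension, which forces the signature to be balanced.

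First I would verify that $E_\nu$ is a totally isotropic subspace of $(I_\nu, \langle \widehat{A}_\Gamma \cdot,\cdot\rangle)$. This follows by applying Lemma \ref{thm:orthogonal-1} with $\nu_1 = \nu_2 = \nu$: the only hypothesis is $\nu_1 \neq \overline{\nu_2}$, and here $\overline{\nu_2} = \bar\nu \neq \nu$ since $\Im \nu > 0$. The same induction on the length of the Jordan chains (only the factor $\nu_1 - \overline{\nu_2} = 2i\,\Im\nu$ matters, which is non-zero) then gives $\langle \widehat{A}_\Gamma \vv_1, \vv_2\rangle = 0$ for all $\vv_1, \vv_2 \in E_\nu$. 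By the symmetric argument, $E_{\bar\nu}$ is also totally isotropic.

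Next, by Lemma \ref{thm:non-degenerate}, the restriction of $\langle \widehat{A}_\Gamma \cdot,\cdot\rangle$ to $I_\nu$ is a non-degenerate Hermitian form on a complex vector space of dimension $2m_\nu$. Let $(p,q)$ denote its signature, so that
\[
p + q = \dim_\C I_\nu = 2m_\nu \quad \textrm{and} \quad \iMor\bigl(\widehat{A}_\Gamma|_{I_\nu}\bigr) = q.
\]
A standard fact (e.g. by Witt's theorem) about non-degenerate Hermitian forms is that the maximal totally isotropic subspaces have complex dimension $\min(p,q)$. Since $E_\nu \subset I_\nu$ is isotropic of dimension $m_\nu$, we deduce $m_\nu \leq \min(p,q)$. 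Combined with $p + q = 2m_\nu$ this forces $p = q = m_\nu$, which gives the claim.

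The main (and essentially only) technical point to verify carefully is that the proof of Lemma \ref{thm:orthogonal-1} remains valid in the degenerate-looking case $\nu_1 = \nu_2 = \nu$; but this is immediate once one notes that the induction in that proof uses only $\nu_1 \neq \overline{\nu_2}$ and never $\nu_1 \neq \nu_2$. Everything else reduces to linear algebra: an $m$-dimensional isotropic subspace inside a non-degenerate Hermitian form of total dimension $2m$ is automatically a maximal isotropic subspace, and its existence forces the signature to be neutral.
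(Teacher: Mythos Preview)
Your argument is correct and follows essentially the same route as the paper: both use Lemma~\ref{thm:orthogonal-1} (with $\nu_1=\nu_2=\nu$, relying only on $\nu\neq\bar\nu$) to see that $E_\nu$ and $E_{\bar\nu}$ are isotropic for $\langle\widehat A_\Gamma\cdot,\cdot\rangle$, and then Lemma~\ref{thm:non-degenerate} to force balanced signature. The only cosmetic difference is that the paper writes the form in block shape $\begin{bmatrix}0 & Y\\ \trasp{Y} & 0\end{bmatrix}$ with $Y$ invertible and reads off the signature directly, whereas you reach the same conclusion via the Witt-type bound $\dim(\text{isotropic})\le\min(p,q)$; these are equivalent formulations of the same elementary fact.
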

\begin{proof}
By using  Lemma \ref{thm:orthogonal-1}, the quadratic form $\left\langle\widehat A_\Gamma\cdot,\cdot\right\rangle$
on $I_{\nu}=E_{\nu}\oplus E_{\bar{\nu}}$ can be represented in the
block matrix form by $\begin{bmatrix}0 & Y\\
\trasp{Y} & 0
\end{bmatrix}$ for some $Y \in \Mat(\nu; \C)$. Moreover, by Lemma \ref{thm:non-degenerate} we infer that  $Y$ is non-degenerate and by  this fact the conclusion readily follows.
\end{proof}
By taking into account Corollary \ref{eq:orthogonal} as well as the additivity property of the inertia indices of $\widehat A$  with respect to the direct sum decomposition of the space into $\widehat{A}_\Gamma$-orthogonal subspaces,  it follows that 
\begin{equation}\label{eq:morse}
\iMor(\widehat A_\Gamma)=\sum_{i=1}^l\iMor \big(\widehat A_\Gamma^{\nu_i} \big)
\end{equation}
where we set  $\widehat A_\Gamma^{\nu_i}\=\big[D^{2}H(\vxi)+\omega M_\Gamma\big]|_{I_{\nu_i}}$.

Let 
$\{\vv_{i}\}_{i=0}^{l}$ be a 
Jordan chain for for the generalized eigenspace $E_\nu$; thus 
\begin{equation}
	\begin{cases}
		A_\Gamma\vv_{i+1}=\nu \vv_{i+1}+\vv_{i}\\
		\vv_{0}=0.
	\end{cases}
\end{equation}
 By invoking  Lemma \ref{thm:conjugated-jordan-block}, 
$\{K\vv_{i}\}_{i=0}^{l}$ is a Jordan chain for the generalized eigenspace $E_{2\omega-\nu}$ relative to 
$2\omega-\nu$. More explicitly, the restriction
of $A_\Gamma$ into the subspace generated by $\{K\vv_{i}\}_{i=0}^{l}$ is given by 
${\Gamma}_l(2\omega-\nu,-1)$.

The next two results, Lemma  \ref{thm:matrix-1} and Lemma \ref{thm:matrix-2} will be very useful later on for computing the inertia indices  of $\widehat A_\Gamma^\nu$ in terms of that  of $M_\Gamma^\nu$. In Lemma  \ref{thm:matrix-1} we investigate such a relation by restricting on a single Jordan block. In Lemma \ref{thm:matrix-2} we assume that there exists two different Jordan blocks corresponding to the same eigenvalue.
\begin{lem}\label{thm:matrix-1}
 Let $0\neq\nu\in\sigma(A_\Gamma)\cap \R$ and let $\{\vv_{i}\}_{i=0}^{l}$ be a  Jordan chain for for the generalized eigenspace $E_\nu$. Under the previous notation we get that  
 \[
 \left\langle M_\Gamma\vv_{i},\vv_{j+1}\right\rangle =\left\langle M_\Gamma \vv_{i+1},\vv_{j}\right\rangle \textrm{ and }
\left\langle M_\Gamma\vv_{i},\vv_{j}\right\rangle =0 \textrm{ for } 1\leq j\leq l-i \textrm{ and } i=1,\cdots,l-1.
\]
\end{lem}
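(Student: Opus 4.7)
The plan is to exploit the $M_\Gamma$-symmetry of $A_\Gamma=M_\Gamma^{-1}D^{2}H(\vxi)+\omega\Id$, which is already in hand: $M_\Gamma^{-1}D^{2}H(\vxi)$ was observed to be $M_\Gamma$-symmetric earlier in the paper, and $\omega\Id$ is trivially so, hence $\Gscal{A_\Gamma\vu,\vw}=\Gscal{\vu,A_\Gamma\vw}$ for all $\vu,\vw$. The whole lemma should follow from this identity together with the defining Jordan relations $A_\Gamma\vv_{k+1}=\nu\vv_{k+1}+\vv_k$ and the convention $\vv_0=0$.

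For the first (shift-of-indices) identity, I would apply $M_\Gamma$-symmetry to the pair $(\vv_{i+1},\vv_{j+1})$. Substituting the Jordan relation on each side yields
\begin{equation*}
\nu\Gscal{\vv_{i+1},\vv_{j+1}}+\Gscal{\vv_i,\vv_{j+1}}\;=\;\nu\Gscal{\vv_{i+1},\vv_{j+1}}+\Gscal{\vv_{i+1},\vv_j},
\end{equation*}
and the common $\nu$-terms cancel, leaving $\Gscal{\vv_i,\vv_{j+1}}=\Gscal{\vv_{i+1},\vv_j}$ for every admissible $i,j\in\{0,\dots,l-1\}$. This step is pure bookkeeping and presents no difficulty.

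For the vanishing identity I would run a descent: iterate the shift identity to push the first index down to zero. Concretely, as long as $i+j\leq l$ each application of the shift remains within the admissible index range, so
\begin{equation*}
\Gscal{\vv_i,\vv_j}\;=\;\Gscal{\vv_{i-1},\vv_{j+1}}\;=\;\cdots\;=\;\Gscal{\vv_0,\vv_{i+j}}\;=\;0,
\end{equation*}
the last equality using $\vv_0=0$. This covers precisely the range $1\leq j\leq l-i$, $1\leq i\leq l-1$ asserted in the statement.

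I do not foresee a genuine obstacle. The only delicate point is the index-bookkeeping: one must verify that every intermediate pair $(i-k,j+k)$ remains within $\{0,\dots,l\}\times\{0,\dots,l\}$, and this is exactly what the constraint $i+j\leq l$ (equivalently $j\leq l-i$) in the hypothesis guarantees. It is also worth noting that the assumptions $\nu\in\mathbb{R}$ and $\nu\neq 0$ play no role in this particular algebraic computation; they are included because they will be needed in the subsequent lemmas that use this result to compute inertia indices of $\widehat A_\Gamma$ on generalized eigenspaces.
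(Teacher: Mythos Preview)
Your argument is correct and essentially identical to the paper's: both compute $\langle \widehat A_\Gamma \vv_{i+1},\vv_{j+1}\rangle$ two ways using the symmetry of $\widehat A_\Gamma=M_\Gamma A_\Gamma$ and the Jordan relation, cancel the $\nu$-terms to obtain the shift identity, and then descend to $\vv_0=0$ for the vanishing claim. Your remark that the hypotheses $\nu\in\R$ and $\nu\neq 0$ are not actually used in this computation is accurate and worth noting.
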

\begin{proof}
By direct computation we infer that 
\begin{multline*}
\left\langle \widehat A_\Gamma\vv_{i+1},\vv_{j+1}\right\rangle =  
\left\langle M_\Gamma  A_\Gamma\vv_{i+1},\vv_{j+1}\right\rangle=  \left\langle M_\Gamma(\nu \vv_{i+1}+\vv_{i}),\vv_{j+1}\right\rangle \\
= \nu\left\langle M_\Gamma \vv_{i+1},\vv_{j+1}\right\rangle +\left\langle M_\Gamma \vv_{i},\vv_{j+1}\right\rangle.\textrm{ Moreover } \\
 \left\langle \widehat A_\Gamma\vv_{i+1},\vv_{j+1}\right\rangle =\left\langle \vv_{i+1},\widehat A_\Gamma\vv_{j+1}\right\rangle = \left\langle \vv_{i+1},M_\Gamma  A_\Gamma\vv_{j+1}\right\rangle \\
=  \nu\left\langle M_\Gamma\vv_{i+1},\vv_{j+1}\right\rangle +\left\langle M_\Gamma\vv_{i+1},\vv_{j}\right\rangle.
\end{multline*}
By taking the difference of the first and last members in the previous equations, we get   $\left\langle M_\Gamma\vv_{i},\vv_{j+1}\right\rangle =\left\langle M_\Gamma\vv_{i+1},\vv_{j}\right\rangle .$
Being $\vv_{0}=0$, we infer  also  that $\left\langle M_\Gamma\vv_{i},\vv_{j}\right\rangle =0$
for every $1\leq j\leq l-i$ and $i=1,\cdots,l-1.$ This concludes the proof.
\end{proof}
\begin{lem}\label{thm:matrix-2} Let $0\neq\nu\in\sigma(A_\Gamma)\cap \R$ and we assume that $\{\vv_{i}\}_{i=0}^{p}$ and $\{\vw_{j}\}_{j=0}^{q}$ are two Jordan chains for the generalized eigenspaces relative to the same eigenvalue $\nu$ and such that $\vv_{0}=\vw_{0}=0$; furthermore we assume that $p\leq q$.   Then we have
$\left\langle M_\Gamma \vv_{i},\vw_{j+1}\right\rangle =\left\langle M_\Gamma\vv_{i+1},\vw_{j}\right\rangle $
and $\left\langle M_\Gamma\vv_{i},\vw_{j}\right\rangle =0\ for\ 1\leq i+j\leq q$.
\end{lem}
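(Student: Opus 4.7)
The plan is to follow verbatim the scheme of Lemma \ref{thm:matrix-1}, replacing the single Jordan chain by the two distinct chains $\{\vv_i\}$ and $\{\vw_j\}$ associated with the same real eigenvalue $\nu$. The only substantive input is the $M_\Gamma$-symmetry of $A_\Gamma$, i.e.\ the ordinary symmetry of $\widehat A_\Gamma = M_\Gamma A_\Gamma$.

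First I would compute $\left\langle \widehat A_\Gamma \vv_{i+1},\vw_{j+1}\right\rangle$ in two ways: expanding on the left via $A_\Gamma \vv_{i+1}=\nu\vv_{i+1}+\vv_i$, and on the right via $A_\Gamma \vw_{j+1}=\nu\vw_{j+1}+\vw_j$. Since $\widehat A_\Gamma$ is symmetric and $\nu$ is real, the diagonal term $\nu\left\langle M_\Gamma\vv_{i+1},\vw_{j+1}\right\rangle$ cancels upon equating the two expressions, leaving exactly the shift identity
\[
\left\langle M_\Gamma\vv_i,\vw_{j+1}\right\rangle=\left\langle M_\Gamma\vv_{i+1},\vw_j\right\rangle.
\]

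Next I would iterate this shift toward the boundary. Given indices with $1\leq i+j\leq q$, repeatedly applying the identity in the direction that decreases $i$ and increases $j$ gives
\[
\left\langle M_\Gamma\vv_i,\vw_j\right\rangle=\left\langle M_\Gamma\vv_{i-1},\vw_{j+1}\right\rangle=\cdots=\left\langle M_\Gamma\vv_0,\vw_{i+j}\right\rangle=0,
\]
since $\vv_0=0$. This chain of equalities is legitimate precisely as long as every $\vw$-index encountered lies within the chain, that is, $i+j\leq q$. This is exactly the stated range and is the reason for the asymmetric hypothesis $p\leq q$: shifting in the opposite direction (decreasing $j$, increasing $i$) would only land safely when $i+j\leq p$, a weaker conclusion.

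I do not expect any genuine obstacle; the argument is a direct two-chain analogue of the one-chain computation in Lemma \ref{thm:matrix-1}. The only point requiring care is the boundary bookkeeping at the ends of the chains, which is what forces the bound $i+j\leq q$ rather than $i+j\leq p+q$ or $i+j\leq\min(p,q)$: we must absorb the shifts into the \emph{longer} chain, using $\vv_0=0$ as the terminating identity.
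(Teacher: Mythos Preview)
Your proposal is correct and follows essentially the same approach as the paper: the paper's proof is a one-line ``by a direct computation'' that implicitly repeats the two-way expansion of $\langle \widehat A_\Gamma \vv_{i+1},\vw_{j+1}\rangle$ from Lemma~\ref{thm:matrix-1}, then invokes $\vv_0=\vw_0=0$ and $p\leq q$ exactly as you describe. Your write-up is in fact more explicit about why the bound is $i+j\leq q$ (shifting into the longer chain so that $\vw_{i+j}$ remains defined), which the paper leaves to the reader.
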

\begin{proof}
By a direct computation we get 
$\left\langle M_\Gamma\vv_{i},\vw_{j+1}\right\rangle =\left\langle M_\Gamma\vv_{i+1},\vw_{j}\right\rangle .$
Since $\vv_{0}=\vw_{0}=0$ and being $p\leq q$,  then we have $\left\langle M_\Gamma\vv_{i},\vw_{j}\right\rangle =0$  for $1\leq i+j\leq q$. This concludes the proof.
\end{proof}


\section{Proof of main results}\label{sec:Proof-Main-results-abstract}

This section is devoted to prove the main results of this paper. 
The first  result provides a characterization of the spectral stability of a relative equilibrium $\vz$  in terms of a spectral condition on the central configuration $\vxi$.\\

{\bf Proof of Theorem \ref{thm:generalization-3-1-rob-intro}.\/}
The proof of this result result direct follows by the first claim in   Lemma \ref{thm:spectrum-B-Hessian}. In fact, $\mu$ is an eigenvalue of $M_\Gamma^{-1}D^{2}H(\vxi)$ if and only if $\lambda= \sqrt{\mu^2-\omega^2}$ is an eigenvalue of $B$. By definition, $\vz$ is spectrally stable if and only if the spectrum of $B$ is purely imaginary or which is the same that $\mu^2-\omega^2 \leq 0$. This concludes the proof.\qed 

The next result shed provides a cler relation intertwining the spectral condition on the central configuration generating the relative equilibrium seen as critical point of the Hamiltonian on the shape pseudo-sphere and the dynamical (stability) properties of it. Roberts  in Theorem 3.3 of  \cite{Rob13}  characterizes linearly stable relative equilibria in terms of the minimality properties that the central configuration (originating such an equilibrium) possesses. 
\begin{thm}[\bf{Roberts 2013}]\label{thm:roberts2013}
We assume that for every $j$, $\Gamma_j>0$. Then a relative equilibrium $\vz$ is linearly stable if and only if it is a non-degenerate minimum of $H$ restricted to the shape-pseudo-sphere.
\end{thm}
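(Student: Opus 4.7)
The plan is to combine Theorem~\ref{thm:stability-Morse-index-intro} (for one implication) with a direct spectral argument (for the other); both become particularly clean under the standing hypothesis $\Gamma_j>0$, where $M_\Gamma$ is positive definite, the circulation product $\langle\cdot,\cdot\rangle_\Gamma$ is a genuine inner product, and $\omega=L/(2I(\vxi))>0$ by \eqref{eq:segno-omega}. In this regime the $M_\Gamma$-symmetric operator $A_\Gamma=M_\Gamma^{-1}D^{2}H(\vxi)+\omega\Id$ is diagonalizable over $\R$ and one has the key identity $\langle\widehat A\,\vu,\vu\rangle=\langle A_\Gamma\vu,\vu\rangle_\Gamma$. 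A Lagrange-multiplier computation using $D^{2}I=M_\Gamma$ identifies the intrinsic Hessian of $H|_{\widehat\S_N^\vc}$ at $[\vxi]$ with $\widehat A(\vxi)|_{W^\perp}$, so that ``$[\vxi]$ is a non-degenerate minimum of $H$ on the shape pseudo-sphere'' is equivalent to $\widehat A(\vxi)|_{W^\perp}$ being positive definite.

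For the direction $(\Rightarrow)$, linear stability implies spectral stability together with non-degeneracy, and the positive-$\omega$, positive-definite-$M_\Gamma$ branch of Theorem~\ref{thm:stability-Morse-index-intro} then yields $\iMor(\widehat A|_{W^\perp})=\iMor(M_\Gamma|_{W^\perp})=0$; combined with non-degeneracy this is exactly positive definiteness of $\widehat A|_{W^\perp}$. For the direction $(\Leftarrow)$, I would argue directly using the pairing of Lemma~\ref{thm:simple-eigenvalue block-1}: each real eigenpair $(\mu,\vv)$ of $M_\Gamma^{-1}D^{2}H(\vxi)$ is matched by $(-\mu,K\vv)$, so on the $K$-invariant $2$-plane $\mathrm{span}\{\vv,K\vv\}$ the operator $A_\Gamma$ has eigenvalues $\omega\pm\mu$. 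Positive definiteness of $\widehat A|_{W^\perp}$ therefore forces $\omega\pm\mu>0$, i.e.\ $\mu\in(-\omega,\omega)$, for every eigenvalue $\mu$ appearing on $W^\perp$. By Theorem~\ref{thm:generalization-3-1-rob-intro} this already gives non-degenerate spectral stability.

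To upgrade to linear stability I would invoke Lemma~\ref{thm:simple-eigenvalue block}(a): the restriction of $B$ on each such $2$-plane is $\begin{bmatrix}0 & \mu-\omega\\ \mu+\omega & 0\end{bmatrix}$, and since $\mu^{2}<\omega^{2}$ this matrix is real-similar to the rotation block $\begin{bmatrix}0 & \beta\\ -\beta & 0\end{bmatrix}$ with $\beta=\sqrt{\omega^{2}-\mu^{2}}>0$. Because $A_\Gamma$ is diagonalizable over $\R$ and $W^\perp$ is $K$-stable in the $M_\Gamma$-metric, these $2$-planes exhaust $W^\perp$, yielding the block-diagonal Jordan form required by Definition~\ref{def:stability}.

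The main obstacle I anticipate is the bookkeeping in the first step: carefully matching the intrinsic second variation on the $(2N-4)$-dimensional manifold $\widehat\S_N^\vc$ with the ambient $\widehat A(\vxi)|_{W^\perp}\subset\R^{2N}$, in particular verifying that the residual translational directions $\vs,K\vs$ (which by Lemma~\ref{thm:invariant-subspaces} lie in $\ker D^{2}H$ and in fact lie in $W^\perp$ because $\vxi$ has vanishing center of vorticity) contribute only the positive block $\omega M_\Gamma|_{\mathrm{span}\{\vs,K\vs\}}$ and therefore neither disturb the minimality count nor the Jordan decomposition. Once this identification is settled, the $(\mu,-\mu)$ pairing reduces each implication to a one-line real-spectral computation, which is precisely the simplification that becomes unavailable in the mixed-sign regime discussed in Remark~\ref{rem:interessante}.
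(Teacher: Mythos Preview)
The paper does not prove this statement: Theorem~\ref{thm:roberts2013} is quoted verbatim from \cite{Rob13} as background and motivation, with no proof given. The paper's own contribution is the generalization (Theorem~\ref{thm:stability-Morse-index-intro}) to mixed-sign circulations; the only overlap is the unnumbered Corollary immediately following, which recovers the weaker implication ``spectrally stable non-degenerate $\Rightarrow$ (possibly degenerate) minimum'' in the positive-circulation case.

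Your argument is therefore not a comparison but an independent proof, and it is essentially correct. Using the restriction formula~\eqref{eq: restriction index formula} for $(\Rightarrow)$ is exactly right, and since the paper proves Theorem~\ref{thm:stability-Morse-index-intro} without invoking Roberts' result there is no circularity. For $(\Leftarrow)$ your pairing argument via Lemma~\ref{thm:simple-eigenvalue block-1} and the $2\times2$ blocks of Lemma~\ref{thm:simple-eigenvalue block}(a) is the standard route and works cleanly because $M_\Gamma>0$ forces $M_\Gamma^{-1}D^{2}H(\vxi)$ to be $M_\Gamma$-self-adjoint with real spectrum and an $M_\Gamma$-orthogonal eigenbasis; this is what makes the $K$-invariant $2$-planes genuinely span $W^\perp$, as you note.

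The bookkeeping you flag is real but routine: $W^\perp$ has dimension $2n-2$ while $T_{[\vxi]}\widehat\S_N^\vc$ has dimension $2n-4$, and the gap is precisely $\mathrm{span}\{\vs,K\vs\}$. Since $\X$ is the $M_\Gamma$-orthogonal complement of $\mathrm{span}\{\vs,K\vs\}$ and $\langle\widehat A\vs,\vv\rangle=\omega\langle M_\Gamma\vs,\vv\rangle=0$ for $\vv\in\X$, the decomposition $W^\perp=T_{[\vxi]}\widehat\S_N^\vc\oplus\mathrm{span}\{\vs,K\vs\}$ is $\widehat A$-orthogonal, and on the translational block $\widehat A=\omega M_\Gamma>0$. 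Hence positive definiteness of $\widehat A|_{W^\perp}$ and of the intrinsic Hessian on the shape sphere are equivalent, which closes the identification.
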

Thus, by Theorem \ref{thm:roberts2013},  the linear stability of a relative equilibrium is equivalent to the fact that the central configuration generating it has a vanishing Morse index and it is non-degenerate (meaning that the kernel dimension of the Hessian of the Hamiltonian restricted to the shape pseudo-sphere vanishes identically). 
However this result is valid only under the assumption that all  circulations have the same sign. In Theorem \ref{thm:stability-Morse-index-intro},  by using the analysis performed in the previous sections, we are able to remove the condition on the circulations' sign admitting any kind of (non-vanishing) circulation and we provide a relation between the spectral stability of a  relative equilibrium and the Morse index  of the central configuration  generating it. As Corollary of this result, we complement the aforementioned Theorem \ref{thm:roberts2013}. 

Before giving the proof of this result, we observe that if all circulations strengths have all  the same sign (for instance, positive), then $M_\Gamma$ is positive definite (thus $\iMor(M_\Gamma)=0$) and by Equation \eqref{eq:segno-omega} in particular $\omega$ is positive. Thus by the first claim of Theorem \ref{thm:stability-Morse-index-intro}, we conclude that $\vxi$ is a minimum (maybe degenerate).  
\begin{cor}	If $\Gamma_j>0$ for all $j$, and we assume that  $\vz$ is a spectral  stable non-degenerate relative equilibrium. Then the central configuration $\vxi$   is a (maybe degenerate) minimum of $H$.
	\end{cor}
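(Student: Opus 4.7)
The strategy is to reduce the Corollary to the index formula \eqref{eq: restriction index formula} of Theorem \ref{thm:stability-Morse-index-intro} and then identify the restricted operator with the Hessian of $H$ on the shape pseudo-sphere.

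First, under the hypothesis that all $\Gamma_j>0$, I would observe that $M_\Gamma$ is positive definite, hence $\langle M_\Gamma \vxi,\vxi\rangle = 2I(\vxi)>0$, and moreover $L = \sum_{i<j}\Gamma_i\Gamma_j>0$. By the identity $\omega = L/\big(2I(\vxi)\big)$ recorded in \eqref{eq:segno-omega}, this forces $\omega>0$. We are therefore in the first branch of \eqref{eq: restriction index formula}, so
\[
\iMor\!\left(\widehat A_\Gamma|_{W^{\perp}}(\vxi)\right) \;=\; \iMor\!\left(M_\Gamma|_{W^{\perp}}\right) \;=\; 0,
\]
the last equality because the restriction of a positive definite form to any subspace remains positive definite. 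The spectral stability and non-degeneracy of $\vz$ are exactly what is required to apply Theorem \ref{thm:stability-Morse-index-intro}.

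Next I would identify $\widehat A_\Gamma|_{W^{\perp}}$ with the Hessian of the restriction of $H$ to the shape pseudo-sphere $\widehat{\S}_N^{\vc}$ at $[\vxi]$. The reduced phase space $\X$ encodes the translational invariance, and the level constraint $I=I_0$ cuts out the angular-impulse sphere; by the Lagrange multiplier rule, at the critical point $\vxi$ with multiplier $\omega$,
\[
D^{2}\!\left(H|_{\{I=I_0\}}\right)(\vxi)[\vu,\vu] \;=\; \left\langle \big(D^{2}H(\vxi)+\omega M_\Gamma\big)\vu,\vu\right\rangle \;=\; \left\langle \widehat A_\Gamma(\vxi)\,\vu,\vu\right\rangle
\]
for every $\vu$ tangent to $\{I=I_0\}$, i.e.\ every $\vu$ with $\langle M_\Gamma \vxi,\vu\rangle=0$. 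Passing to the quotient by the $S^1$-action amounts to further quotienting by the direction $K\vxi$ of the orbit, and one checks that $K\vxi$ is also $M_\Gamma$-orthogonal to $\vxi$ (so the resulting tangent space at $[\vxi]$ is exactly $W^{\perp}$, the $M_\Gamma$-orthogonal complement of $W=\mathrm{span}(\vxi,K\vxi)$). Hence the Hessian of $H|_{\widehat{\S}_N^{\vc}}$ at $[\vxi]$ is the symmetric quadratic form $\langle \widehat A_\Gamma(\vxi)\,\cdot,\cdot\rangle$ restricted to $W^{\perp}$.

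Combining the two steps, the Morse index of $H|_{\widehat{\S}_N^{\vc}}$ at $[\vxi]$ is zero; equivalently, the restricted Hessian is positive semidefinite, which is the definition of a (possibly degenerate) minimum.

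\textbf{Expected main obstacle.} The only genuinely non-mechanical point is the identification in the second paragraph: verifying that, after imposing $\vc=0$ and modding out the $S^1$-orbit, the intrinsic Hessian of the restricted Hamiltonian is really $\widehat A_\Gamma(\vxi)$ restricted to $W^{\perp}$. This is standard Lagrange-multiplier/Riemannian-submersion bookkeeping, but it requires keeping track of both the $M_\Gamma$-orthogonality of $\vxi$ and $K\vxi$ and of the critical point equation \eqref{eq:cc-vortex}; everything else is a direct invocation of Theorem \ref{thm:stability-Morse-index-intro}.
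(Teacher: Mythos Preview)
Your proposal is correct and follows essentially the same route as the paper: both arguments observe that positive circulations force $M_\Gamma$ positive definite and $\omega>0$, then invoke Theorem~\ref{thm:stability-Morse-index-intro} to conclude the relevant Morse index vanishes. The paper's own proof is a one-line reference to the preceding discussion and uses the unrestricted formula~\eqref{eq:index formula-1} rather than~\eqref{eq: restriction index formula}, leaving the identification with the shape-sphere Hessian implicit; your version spells this identification out, which is a welcome addition rather than a departure.
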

\begin{proof}
The proof of the first claim follows by the above discussion. 	
\end{proof}
Before  providing the proof of Theorem \ref{thm:stability-Morse-index-intro}, we start  proving  the following technical result.   \begin{lem}\label{thm:Morseindex-of -A-on-general-eigenvector-subspace-for-real-eigenvalue}
			Let $\nu$ be a non-zero real eigenvalue of matrix $A_\Gamma$, then we have that \[
			\begin{cases}
			n_{-}\left(\left.\hat{A}_{\Gamma}\right|_{I_{\nu}}\right)=n_{-}\left(\left.M_{\Gamma}\right|_{I_{\nu}}\right) & \textrm{if}\ \nu>0\\
			n_{-}\left(\left.\hat{A}_{\Gamma}\right|_{I_{\nu}}\right)=n_{+}\left(\left.M_{\Gamma}\right|_{I_{\nu}}\right) & \textrm{if}\ \nu<0
			\end{cases}
			\]
	\end{lem}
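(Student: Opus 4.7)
Since $\nu$ is real and nonzero, $I_\nu=E_\nu$. The strategy is to compare the symmetric bilinear forms $\widehat A_\Gamma|_{E_\nu}$ and $M_\Gamma|_{E_\nu}$ by connecting them through a homotopy of non-degenerate symmetric forms, after which the conclusion reduces to tracking the sign of $\nu$.

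First I would write the restriction of $A_\Gamma$ to its generalized eigenspace as $A_\Gamma|_{E_\nu}=\nu\,\mathrm{Id}+N$ where $N$ is nilpotent (and $M_\Gamma$-symmetric, since $A_\Gamma$ is). Consequently, as symmetric bilinear forms on $E_\nu$,
\begin{equation}
\widehat A_\Gamma|_{E_\nu}=M_\Gamma A_\Gamma|_{E_\nu}=\nu\,M_\Gamma|_{E_\nu}+M_\Gamma N|_{E_\nu},
\end{equation}
where both summands are symmetric (the second because $\widehat A_\Gamma$ and $\nu M_\Gamma$ are). This leads to the one-parameter family
\begin{equation}
H(t)\=\nu M_\Gamma|_{E_\nu}+t\,M_\Gamma N|_{E_\nu},\qquad t\in[0,1],
\end{equation}
with $H(0)=\nu M_\Gamma|_{E_\nu}$ and $H(1)=\widehat A_\Gamma|_{E_\nu}$.

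The second step is to check that every $H(t)$ is non-degenerate, so that Sylvester's law of inertia (in its continuous version) gives $\iMor(H(1))=\iMor(H(0))$. As bilinear forms, $H(t)$ has matrix $P^{\trasp{}}M_\Gamma P\cdot(\nu\,\mathrm{Id}+tN|_{E_\nu})$ in any basis $P$ of $E_\nu$, so that
\begin{equation}
\det H(t)=\det\bigl(M_\Gamma|_{E_\nu}\bigr)\cdot\nu^{\,m_\nu}.
\end{equation}
Here I use: (i) $\nu\,\mathrm{Id}+tN|_{E_\nu}$ is upper-triangular with constant diagonal $\nu$, hence has determinant $\nu^{m_\nu}\neq 0$; (ii) $M_\Gamma|_{E_\nu}$ is itself non-degenerate. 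Claim (ii) follows because by Lemma \ref{thm:non-degenerate} $\widehat A_\Gamma|_{E_\nu}$ is non-degenerate, $A_\Gamma|_{E_\nu}$ is invertible, and $\widehat A_\Gamma|_{E_\nu}=M_\Gamma|_{E_\nu}\cdot A_\Gamma|_{E_\nu}$ as matrices. Thus $\det H(t)$ is a nonzero constant on $[0,1]$.

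Finally, since $H$ is a continuous path of non-degenerate symmetric bilinear forms on $E_\nu$, its inertia is constant, so
\begin{equation}
\iMor\bigl(\widehat A_\Gamma|_{I_\nu}\bigr)=\iMor\bigl(\nu\,M_\Gamma|_{I_\nu}\bigr)=
\begin{cases}\iMor\bigl(M_\Gamma|_{I_\nu}\bigr) & \text{if }\nu>0,\\ \coiMor\bigl(M_\Gamma|_{I_\nu}\bigr) & \text{if }\nu<0,\end{cases}
\end{equation}
which is the desired identity. I expect the main subtlety to lie in step two, specifically in justifying the non-degeneracy of $M_\Gamma|_{E_\nu}$; an alternative route, using the explicit Hankel-type anti-triangular form of $M_\Gamma|_{E_\nu}$ obtained from Lemmas \ref{thm:matrix-1} and \ref{thm:matrix-2}, would make both non-degeneracy and the signature-by-sign-of-leading-antidiagonal argument visible by hand, but is computationally heavier than the homotopy argument above.
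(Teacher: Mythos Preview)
Your proof is correct and rests on the same core idea as the paper: connect $\widehat A_\Gamma|_{I_\nu}$ to a scalar multiple of $M_\Gamma|_{I_\nu}$ by a continuous path of non-degenerate symmetric forms, then invoke constancy of inertia. The difference is in execution. The paper first fixes a Jordan basis of $E_\nu$, uses Lemmas~\ref{thm:matrix-1} and~\ref{thm:matrix-2} to exhibit the explicit anti-triangular (Hankel-type) matrices of $M_\Gamma|_{E_\nu}$ and $\widehat A_\Gamma|_{E_\nu}$, derives the factorization $\widehat A_\Gamma^\nu = M_\Gamma^\nu\,\diag[\Gamma_p(\nu,1),\Gamma_q(\nu,1)]$, and then concatenates \emph{two} homotopies (one moving the diagonal entries from $\pm 1$ to $\nu$, one switching the superdiagonal on/off) to reach $\pm M_\Gamma^\nu$. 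Your single linear homotopy $H(t)=M_\Gamma|_{E_\nu}(\nu\,\mathrm{Id}+tN)$ bypasses the Jordan-basis bookkeeping entirely: you only need that $N$ is nilpotent (so $\det(\nu\,\mathrm{Id}+tN)=\nu^{m_\nu}$) and that $M_\Gamma|_{E_\nu}$ is non-degenerate, which you correctly deduce from Lemma~\ref{thm:non-degenerate} and the invertibility of $A_\Gamma|_{E_\nu}$. Your route is shorter and basis-free; the paper's route makes the structure of the restricted forms visible by hand, which is what you allude to as the ``computationally heavier'' alternative.
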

\begin{proof}
	 We assume that there exist   two different Jordan blocks $\Gamma_{1}(\nu),\Gamma_{2}(\nu)$ corresponding to same eigenvalue $\nu$ and, as before, we  denote by  $\{\vv_{i}\}_{i=0}^{p}$ and $\{\vw_{j}\}_{j=0}^{q}$ the 
	Jordan chains corresponding to these   Jordan blocks.  Let us consider the following matrix block decomposition
	\[
	\widehat{A}(\nu)=\begin{bmatrix}A_{1} & A_{2}\\
	A_{2}^{T} & A_{3}
	\end{bmatrix}\quad 
	\textrm{ and }\quad 
	M_\Gamma(\nu)=\begin{bmatrix}M_{1} & M_{2}\\
	M_{2}^{T} & M_{3}
	\end{bmatrix},
	\]
	where $(A_{1})_{ij}=\left\langle \widehat{A}	\vv_{i},\vv_{j}\right\rangle $,
	$(A_{2})_{ij}=\left\langle \widehat{A}\vv_{i},\vw_{j}\right\rangle ,$ $(A_{3})_{ij}=\left\langle \widehat{A}\vw_{i},\vw_{j}\right\rangle$, $\ (M_{1})_{ij}=\left\langle M\vv_{i},\vv_{j}\right\rangle$, $(M_{2})_{ij}=\left\langle M\vv_{i},\vw_{j}\right\rangle $
	and finally $(M_{3})_{ij}=\left\langle M\vw_{i},\vw_{j}\right\rangle .$ By Lemma \ref{thm:matrix-1}, one immediately get that the $p \times p$ block $A_1$ is given by:  
	\small{\begin{alignat}{1}\label{eq:representionofA-1}
	&A_{1}=  \left\langle \widehat{A}\vv_{i+1},\vv_{j+1}\right\rangle 
	= \nu\left\langle M_\Gamma\vv_{i+1},\vv_{j+1}\right\rangle +\left\langle M_\Gamma\vv_{i},\vv_{j+1}\right\rangle  =\\
	& \begin{bmatrix}0 & 0 & \cdots & 0 & \nu\left\langle M_\Gamma\vv_{1},\vv_{p}\right\rangle \\
	0 & 0 & \cdots & \nu\left\langle M_\Gamma\vv_{1},\vv_{p}\right\rangle  & \left\langle M_\Gamma\vv_{1}+\nu M_\Gamma\vv_{2},\vv_{p}\right\rangle \\
	\vdots & \vdots & \ddots & \vdots & \vdots\\
	0 & \nu\left\langle M_\Gamma\vv_{1},\vv_{p}\right\rangle  & \cdots & \left\langle M_\Gamma\vv_{p-3}+\nu M_\Gamma\vv_{p-2},\vv_{p}\right\rangle  & \left\langle M_\Gamma\vv_{p-2}+\nu M_\Gamma\vv_{p-1},\vv_{p}\right\rangle \\
	\nu\left\langle M_\Gamma\vv_{1},\vv_{p}\right\rangle  & \left\langle M_\Gamma\vv_{1}+\nu M_\Gamma\vv_{2},\vv_{p}\right\rangle  & \cdots & \left\langle M_\Gamma\vv_{p-2}+\nu M_\Gamma\vv_{p-1},\vv_{p}\right\rangle  & \left\langle M_\Gamma\vv_{p-1}+\nu M_\Gamma\vv_{p},\vv_{p}\right\rangle 
	\end{bmatrix}
	\end{alignat}\/}
	and 
	\begin{equation}\label{eq:representionofM-1}
	M_{1}=\begin{bmatrix}0 & 0 & \cdots & 0 & \left\langle M_\Gamma \vv_{1},\vv_{p}\right\rangle \\
	0 & 0 & \cdots & \left\langle M_\Gamma \vv_{1},\vv_{p}\right\rangle  & \left\langle M_\Gamma v_{2},\vv_{p}\right\rangle \\
	\vdots & \vdots & \ddots & \vdots & \vdots\\
	0 & \left\langle M_\Gamma \vv_{1},\vv_{p}\right\rangle  & \cdots & \left\langle M_\Gamma \vv_{p-2},\vv_{p}\right\rangle  & \left\langle M_\Gamma\vv_{p-1},\vv_{p}\right\rangle \\
	\left\langle M_\Gamma \vv_{1},\vv_{p}\right\rangle  & \left\langle M_\Gamma \vv_{2},\vv_{p}\right\rangle  & \cdots & \left\langle M_\Gamma \vv_{p-1},\vv_{p}\right\rangle  & \left\langle M_\Gamma \vv_{p},\vv_{p}\right\rangle 
	\end{bmatrix}.
	\end{equation}
	It is readily seen that the matrix $A_1$ given in Equation \eqref{eq:representionofA-1} can be written in equivalent form, as follows 
	\begin{equation}\label{eq:A1}
	A_{1}= M_{1}\Gamma_{p}(\nu,1).
	\end{equation}
	Analogously, we have that 
	\begin{equation}\label{eq:A3}
	A_{3}=M_{3}\Gamma_{q}(\nu,1).
	\end{equation}
	By  Lemma \ref{thm:matrix-2}, one gets that the $p\times q$ block $A_2$ is given by 
	\small{\begin{alignat}{1}\label{eq:matrix3}
	&A_{2}  = \left\langle \widehat{A}\vv_{i+1},\vw_{j+1}\right\rangle 
	=  \nu\left\langle M_\Gamma\vv_{i+1},\vw_{j+1}\right\rangle +\left\langle M_\Gamma\vv_{i},\vw_{j+1}\right\rangle= \\
	&  \begin{bmatrix}0 & 0 & \cdots & 0 & \nu\left\langle M_\Gamma\vv_{1},\vv_{q}\right\rangle \\
	0 & 0 & \cdots & \nu\left\langle M_\Gamma\vv_{1},\vv_{q}\right\rangle  & \left\langle M_\Gamma\vv_{1}+\nu M_\Gamma\vv_{2},\vv_{q}\right\rangle \\
	\vdots & \vdots & \ddots & \vdots & \vdots\\
	0 & \nu\left\langle M_\Gamma\vv_{1},\vv_{q}\right\rangle  & \cdots & \left\langle M_\Gamma\vv_{p-3}+\nu M_\Gamma\vv_{p-2},\vv_{q}\right\rangle  & \left\langle M_\Gamma\vv_{p-2}+\nu M_\Gamma\vv_{p-1},\vv_{q}\right\rangle \\
	\nu\left\langle M_\Gamma\vv_{1},\vv_{q}\right\rangle  & \left\langle M_\Gamma\vv_{1}+\nu M_\Gamma\vv_{2},\vv_{q}\right\rangle  & \cdots & \left\langle M_\Gamma\vv_{p-2}+\nu M_\Gamma\vv_{p-1},\vv_{q}\right\rangle  & \left\langle M_\Gamma\vv_{p-1}+\nu M_\Gamma\vv_{p},\vv_{q}\right\rangle 
	\end{bmatrix}
	\end{alignat}}
	and 
	\begin{equation}\label{eq:matrix4}
	M_{2}=\begin{bmatrix}0 & 0 & \cdots & 0 & \left\langle M_\Gamma\vv_{1},\vv_{q}\right\rangle \\
	0 & 0 & \cdots & \left\langle M_\Gamma\vv_{1},\vv_{q}\right\rangle  & \left\langle M_\Gamma\vv_{2},\vv_{q}\right\rangle \\
	\vdots & \vdots & \ddots & \vdots & \vdots\\
	0 & \left\langle M_\Gamma\vv_{1},\vv_{q}\right\rangle  & \cdots & \left\langle M_\Gamma\vv_{p-2},\vv_{q}\right\rangle  & \left\langle M_\Gamma\vv_{p-1},\vv_{q}\right\rangle \\
	\left\langle M_\Gamma\vv_{1},\vv_{q}\right\rangle  & \left\langle M_\Gamma\vv_{2},\vv_{q}\right\rangle  & \cdots & \left\langle M_\Gamma\vv_{p-1},\vv_{q}\right\rangle  & \left\langle M_\Gamma\vv_{p},\vv_{q}\right\rangle 
	\end{bmatrix}
	\end{equation}
	so, as before, by Equations \eqref{eq:matrix3} and \eqref{eq:matrix4} imply that 
	\begin{equation}\label{eq:A2}
	A_{2}=M_{2}\varGamma_{q}(\nu,1).
	\end{equation}
	Similarly for the term 
	\begin{equation}
	\trasp{A}_{2}=M_{2}^{T}\varGamma_{p}(\nu,1).\label{eq:A2T}
	\end{equation}
	Thus \eqref{eq:A1}, \eqref{eq:A3}, \eqref{eq:A2} and finally \eqref{eq:A2T} imply that 
	\begin{equation}\label{eq:representationofv}
	\widehat A_\Gamma^\nu =M_\Gamma^\nu\,\textrm{diag}\big[\Gamma_{p}(\nu,1),\ \Gamma_{q}(\nu,1)\big].
	\end{equation}
\paragraph*{Case 1.}If $\nu>0,$ we define the (analytic) path of symmetric matrices pointwise given by  
\[
f(t):=M_{\Gamma}^{\nu}\begin{pmatrix}\Gamma_{p}(t,1) & 0\\
0 & \Gamma_{q}(t,1)
\end{pmatrix}
\]
parametrized by the interval $ [1,\nu]$, if $\nu  >1$ and by $[\nu,1]$, if $\nu < 1$.  Moreover, we let 

\[
g(s):=M_{\Gamma}^{\nu}\begin{pmatrix}\Gamma_{p}(1,s) & 0\\
0 & \Gamma_{q}(1,s)
\end{pmatrix}  \quad  \textrm{ for } s \in [0,1].
\]
 
If an eigenvalue of $f(t)$ (resp. $g(s)$) changes sign, than $\det f(t)=0$ (resp. $\det(g(s)=0$). However, it is immediate to see that this cannot occur.   We observe that the composition of the two  paths $f$ and $g$ is a continuous path joining the matrices  $M_\Gamma^\nu$ matrix to $\widehat A_\Gamma^\nu$. By this argument it  then follows that both matrices belong to the same connected component and in particular the inertia indices coincide; thus in symbols, we have
\begin{align*}
\iMor\big(\widehat{A}(\nu)\big) & =\iMor\big(M_\Gamma(\nu)\big),\\
\coiMor\big(\widehat{A}(\nu)\big) & =\coiMor\big(M_\Gamma(\nu)\big).
\end{align*}

	\paragraph{Case 2.} If $\nu<0$, as before,
	we define the path of symmetric matrices 
\[
f(t):=M_{\Gamma}^{\nu}\begin{pmatrix}\Gamma_{p}(t,1) & 0\\
0 & \Gamma_{q}(t,1)
\end{pmatrix}
\]
	parametrized by the interval $[\nu,-1]$ if $\nu < -1$ and by $[-1,\nu]$ if $\nu > -1$. As before, we let
	\[
	g(s):=M_{\Gamma}^{\nu}\begin{pmatrix}\Gamma_{p}(-1,-s) & 0\\
	0 & \Gamma_{q}(-1,-s)
	\end{pmatrix}
	\]
	where $s\in[-1,0]$. Arguing as before, we get  
	\begin{align*}
	\iMor\big(\widehat A_\Gamma ^\nu\big) & =\coiMor\big(M_\Gamma^\nu\big),\\
	\coiMor\big(\widehat A_\Gamma^\nu\big) & =\iMor\big(M_\Gamma^\nu\big).
	\end{align*}
\end{proof}

{\bf Proof of  Theorem \ref{thm:stability-Morse-index-intro}.\/}
Since $\vz$ is non-degenerate and spectrally stable relative equilibrium,  then by invoking  Lemma \ref{thm:spectrum-B-Hessian}, we get that 
\[
\sigma(A_\Gamma)\subset \R\bigcup\left\{\omega+ix|x\in \R\right\}.
\]
We notice that 
\[
\widehat A_\Gamma(\vxi)=
D^{2}H(\vxi)\vxi+\omega M_{\Gamma}\vxi=M_{\Gamma}\left(M_{\Gamma}^{-1}D^{2}H(\vxi)+\omega \Id\right)\vxi=0
\]
and from property (iii) of Lemma \ref{thm:property-of-H}  we have
\begin{align*}
\widehat A_\Gamma(K\vxi)=D^{2}H(\vz)K\vxi+\omega M_{\Gamma}K\vxi & =M_{\Gamma}\left(M_{\Gamma}^{-1}D^{2}H(\vz)+\omega \Id\right)K\vxi\\
& =M_{\Gamma}K\left(-M_{\Gamma}^{-1}D^{2}H(\vz)+\omega \Id\right)\xi\\
& =M_{\Gamma}K\left(-M_{\Gamma}^{-1}D^{2}H(\vz)-\omega I+2\omega I\right)\vxi\\
& =2\omega M_{\Gamma}K\vxi.
\end{align*}
If  $W=\textup{span}(\vxi, K\vxi)$, then we have  
\begin{align*}
\left.\widehat{A}_\Gamma\right|_{W} & =\begin{bmatrix}\langle \widehat A_\Gamma(\vxi), \vxi\rangle & \langle \widehat A_\Gamma(\vxi), K \vxi \rangle\\
\langle \widehat A_\Gamma(K\vxi), \vxi\rangle & \langle \widehat A_\Gamma(K\vxi),K \vxi\rangle
\end{bmatrix}\\
& =\begin{bmatrix}0 & 0\\
\langle K\vxi,\widehat A_\Gamma(\vxi)\vxi\rangle & \langle-KD^{2}H(\vxi)\vxi+\omega M_{\Gamma}K\vxi,K\vxi\rangle
\end{bmatrix}\\
& =\begin{bmatrix}0 & 0\\
0 & 2\omega\langle M_{\Gamma}\xi,\xi\rangle
\end{bmatrix}.
\end{align*}
Thus 
\begin{equation}
\iMor(\widehat{A}(\vxi)|_{W})=\begin{cases}
1 & \textrm{ if }\ 2\omega\left\langle M_{\Gamma}\xi,\xi\right\rangle <0,\\
0 & \textrm{ if }\ 2\omega\left\langle M_{\Gamma}\xi,\xi\right\rangle >0.
\end{cases}
\end{equation}
In conclusion, we get 
\[
\iMor(\widehat{A}(\vxi)|_{W})=\textup{sign}(2\omega\left\langle M_\Gamma \vxi,\vxi\right\rangle )
\]
where $\textup{sign}$ stands for the sign.  Let us now consider the $\hat{A}_\Gamma$-orthogonal $M_\Gamma$-orthogonal invariant decomposition of the full (complexified) phase space $\C^{2n}$.   Since
$\C^{2n}=W\oplus W^{\perp}=I_{0}\oplus I_{2\omega}\oplus_{i}(I_{\nu_{i}}\oplus_{i=1}^l I_{\nu_{2\omega-\nu_{i}}})$,
where $\nu_{i}\in\sigma(A_\Gamma)\setminus\{0,2\omega\}.$
So we have that 
\begin{equation}\label{eq:inex 1}
\iMor(\widehat{A}(\vxi)|_{W^{\perp}})=\sum_{i=1}^l\left(\iMor(\widehat{A}(\vxi)|_{I_{\nu_{i}}})+\iMor(\widehat{A}(\vxi)|_{I_{2\omega-\nu_{i}}})\right)
\end{equation}
and
\begin{equation}\label{eq:index 2}
\iMor(M_\Gamma|_{W^{\perp}})=\sum_{i=1}^l\left(\iMor(M_\Gamma|_{I_{\nu_{i}}})+\iMor(M_\Gamma|_{I_{2\omega-\nu_{i}}})\right).
\end{equation}
We let  $\nu\in\sigma(A_\Gamma)\backslash\{0,2\omega\}$ and we consider separately the two cases: $\nu \in \R$ and  $\nu \in\C\setminus \R$. 

If $\nu \in \R$ as consequence of the first item in  Lemma \ref{thm:spectrum-B-Hessian} only two cases can occur: 
\begin{itemize}
	\item Subcase 1: $\omega$ positive and $0<\nu<2\omega$. (In particular $2\omega-\nu >0$ and $\nu>0$);
	\item Subcase 2: $\nu \in \R$, $\omega$ negative and $2\omega<\nu<0$. (In particular $2\omega -\nu <0$ and $\nu<0$).
\end{itemize} 
By Lemma \ref{thm:Morseindex-of -A-on-general-eigenvector-subspace-for-real-eigenvalue}, we have that\[
		\begin{cases}
		n_{-}\left(\left.\hat{A}_{\Gamma}\right|_{I_{\nu}}\right)=n_{-}\left(\left.M_{\Gamma}\right|_{I_{\nu}}\right) & \quad \textrm{ [Subcase 1] }\\
		n_{-}\left(\left.\hat{A}_{\Gamma}\right|_{I_{\nu}}\right)=n_{+}\left(\left.M_{\Gamma}\right|_{I_{\nu}}\right) & \quad \textrm{ [Subcase 2] }
		\end{cases}
		\]

\paragraph{Case 2: $\nu\in\left\{ ix+\omega|x\in \R\right\} \subset \C\setminus \R$.}
By using  Lemma \ref{thm:positive =negative}, we already know that 
$\iMor(\widehat A_\Gamma^\nu)=m_{\nu},$ where $m_{\nu}$ is the algebraic
multiplicities of the eigenvalue $\nu$. Moreover, from Corollary \ref{eq:orthogonal}, the matrix  $M_\Gamma^\nu$
can be represented as follows 
\[
\begin{bmatrix}0 & Y\\
\trasp{Y} & 0
\end{bmatrix},
\]
for some $Y \in \Lin(E_\nu, E_{\bar \nu})$.

Now, since $M_\Gamma^\nu$ is non-degenerate,  it readily follows that  $\iMor\big(M_\Gamma^\nu\big)=\coiMor\big(M_\Gamma^\nu\big)=m_{\nu}$. In conclusion, we get
 $\iMor(\widehat A^\nu)=\iMor\big(M_\Gamma^\nu\big)=\coiMor\big(M^\nu\big)=m_{\nu}.$
This concludes the proof. \qed


\section{Some symmetric examples}

This section is devoted to the application of Theorem \ref{thm:stability-Morse-index-intro} to some specific examples of relative equilibria in the planar $N$-vortex problem. 

\subsection{The equilateral triangle}  We begin with the well-known equilateral triangle solution in the three-vortex problem. Placing three vortices of any strength at the vertices of an equilateral triangle yields a relative equilibrium. 	Synge \cite{Syn49}, showed that the corresponding relative equilibrium is  linearly stable if and only if $L>0$. (Cfr. \cite[Section 4]{Rob13} for further details).
\begin{thm}
Given any three circulations $\Gamma_{1},\ \Gamma_{2}\ \Gamma_{3}$,  let 
\[
\widehat{\vxi}_{1}=(1,0),\ \widehat{\vxi}_{2}=(-\dfrac{1}{2},\dfrac{\sqrt{3}}{2}),\ \widehat{\vxi}_{3}=(-\dfrac{1}{2},-\dfrac{\sqrt{3}}{2}),
\]
and let $\widehat{\vc}=\sum_{i=3}^{3}\Gamma_{i}\widehat{\vz}_{i}$. We assume that  $L>0$ and  let $\vxi=(\vxi_{1},\vxi_{2},\vxi_{3})$, for  $\vxi_{i}=\widehat{\vxi}_{i}-\widehat{\vc}$. Then 
\[
\iMor(\widehat A_\Gamma(\vxi))=
\begin{cases} 0  & \textrm{ if } \Gamma_{1}, \Gamma_{2}, \Gamma_{3} \textrm { have the same sign }\\
1  & \textrm{ if there is only one } \Gamma_{i}<0\\
2  & \textrm{ otherwise }
\end{cases}.
\]
\end{thm}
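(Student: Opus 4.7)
The plan is to reduce the claim to Theorem \ref{thm:stability-Morse-index-intro} together with Synge's classical linear stability criterion for the equilateral triangle relative equilibrium. The argument would proceed in three stages.

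First, the hypothesis $L>0$ combined with Synge's theorem \cite{Syn49} guarantees that the relative equilibrium generated by $\vxi$ is linearly (hence spectrally) stable and non-degenerate. We are therefore exactly in the setting where Theorem \ref{thm:stability-Morse-index-intro} applies, so that $\iMor(\widehat{A}_\Gamma(\vxi))$ can be read off from $\iMor(M_\Gamma)$ or $\coiMor(M_\Gamma)$ once we know the signs of $\omega$ and of $\langle M_\Gamma\vxi,\vxi\rangle=2\,I(\vxi)$.

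Second, I would compute $\omega$ explicitly. Substituting the equilateral configuration into the central-configuration equation \eqref{eq:cc-vortex} and using that all pairwise distances are equal together with $\sum_i\Gamma_i\vxi_i=0$, a short calculation yields $\omega=\Gamma/3$ where $\Gamma=\Gamma_1+\Gamma_2+\Gamma_3$. Via the identity $L=2\omega\,I(\vxi)$ recorded in \eqref{eq:segno-omega-intro} and the standing hypothesis $L>0$, this determines the signs of both $\omega$ and $I(\vxi)$ from the sign of $\Gamma$ alone.

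Third, I would run through the three sign patterns for $(\Gamma_1,\Gamma_2,\Gamma_3)$ and invoke the matching branch of Theorem \ref{thm:stability-Morse-index-intro}. If all $\Gamma_i$ share the same sign, then either $\iMor(M_\Gamma)=0$ (all positive) or $\coiMor(M_\Gamma)=0$ (all negative), and $\textup{sign}(\omega)=\textup{sign}(I(\vxi))=\textup{sign}(\Gamma)$ places us in the branch of \eqref{eq:index formula-1}--\eqref{eq:index formula-2} that yields Morse index $0$. If exactly one $\Gamma_i<0$, then $L>0$ forces $|\Gamma_i|<\Gamma_j\Gamma_k/(\Gamma_j+\Gamma_k)$, in particular $\Gamma>0$, so that $\omega>0$ and $I(\vxi)>0$; the corresponding branch of Theorem \ref{thm:stability-Morse-index-intro} then contributes from the single negative circulation. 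Finally, if two circulations are negative, an analogous harmonic-mean type inequality coming from $L>0$ forces $\Gamma<0$, whence $\omega<0$ and $I(\vxi)<0$, and Theorem \ref{thm:stability-Morse-index-intro} produces the contribution from the unique positive circulation.

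The main obstacle is the sign analysis of $\Gamma$ in the mixed-sign cases: one has to show that $L>0$ by itself, without any auxiliary hypothesis on the distribution of the circulations, already pins down a definite sign of the total circulation $\Gamma$ (and therefore of $\omega$ and $I(\vxi)$), because this is precisely what selects the correct branch of Theorem \ref{thm:stability-Morse-index-intro}. Once this bookkeeping is in place, the conclusion follows from a straightforward count of the negative (resp.\ positive) eigenvalues of $M_\Gamma$.
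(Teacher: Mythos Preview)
Your plan is correct and follows the same overall strategy as the paper: invoke Synge's criterion for linear stability, then apply Theorem~\ref{thm:stability-Morse-index-intro} after determining the signs of $\omega$ and $\langle M_\Gamma\vxi,\vxi\rangle$ in each circulation regime.

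The one genuine difference is in how you obtain the sign of $\langle M_\Gamma\vxi,\vxi\rangle$. The paper computes the coordinates $\vxi_i=\widehat{\vxi}_i-\widehat{\vc}$ explicitly, evaluates $\langle M_\Gamma\vxi,\vxi\rangle=\tfrac{3}{\Gamma^2}\big(\Gamma(\Gamma_1^2+\Gamma_2^2+\Gamma_3^2)+3\Gamma_1\Gamma_2\Gamma_3\big)$, and then in the mixed-sign cases runs a calculus argument (monotonicity of an auxiliary function $b(x,y)$) to pin down the sign. Your route is shorter: from $\omega=\Gamma/3$ and the identity $L=\omega\,\langle M_\Gamma\vxi,\vxi\rangle$ in \eqref{eq:segno-omega-intro}, the sign of $\langle M_\Gamma\vxi,\vxi\rangle$ is just $\textup{sign}(L)\cdot\textup{sign}(\Gamma)=\textup{sign}(\Gamma)$, so everything reduces to the sign of $\Gamma$. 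The ``main obstacle'' you flag is then the elementary inequality $\tfrac{\Gamma_j\Gamma_k}{\Gamma_j+\Gamma_k}<\Gamma_j+\Gamma_k$ for $\Gamma_j,\Gamma_k>0$, which indeed gives $\Gamma>0$ when exactly one circulation is negative (and symmetrically $\Gamma<0$ when exactly one is positive). This bypasses the paper's coordinate computation and calculus step entirely, so your argument is more economical while landing in the same place for the final application of Theorem~\ref{thm:stability-Morse-index-intro}.
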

\begin{proof}
As proved by author in \cite{Syn49},  we get that  $\vxi=(\vxi_{1},\vxi_{2},\vxi_{3})$ is a linearly stable iff $L>0$; moreover, the angular velocity $\omega=\frac{\Gamma}{3}$. We note that the vortex positions $\vxi_{i}=\widehat{\vxi}_{i}-\widehat{\vc}$ have
center of vorticity at the origin.

By an explicit calculation, we get
\begin{multline}
\widehat{\vc}  =\dfrac{1}{\Gamma}\sum_{i=1}^{3}\Gamma_{i}\vxi_{i}
  =\dfrac{1}{\Gamma}\left(\Gamma_{1}\left(1,0\right)+\Gamma_{2}\left(-\frac{1}{2},\frac{\sqrt{3}}{2}\right)+\Gamma_{3}\left(-\frac{1}{2},-\frac{\sqrt{3}}{2}\right)\right)\\
  =\left(\frac{\Gamma_{1}}{\Gamma}-\frac{\Gamma_{1}+\Gamma_{2}}{2\Gamma},\frac{\sqrt{3}}{2\Gamma}\left(\Gamma_{2}-\Gamma_{3}\right)\right)
\end{multline}

\begin{multline}
\vxi_{1}  =\widehat{\vxi}_{1}-\widehat{\vc}
 =\left(1,0\right)-\left(\dfrac{\Gamma_{1}}{\Gamma}-\dfrac{\Gamma_{3}+\Gamma_{2}}{2\Gamma},\dfrac{\sqrt{3}}{2\Gamma}\left(\Gamma_{2}-\Gamma_{3}\right)\right)\\
 =\left(\dfrac{3\left(\Gamma_{3}+\Gamma_{2}\right)}{2\Gamma},-\dfrac{\sqrt{3}}{2\Gamma}\left(\Gamma_{2}-\Gamma_{3}\right)\right)
\end{multline}

\begin{multline}
\vxi_{2}  =\widehat{\vxi}_{2}-\widehat{\vc}
  =\left(-\dfrac{1}{2},\dfrac{\sqrt{3}}{2}\right)-\left(\dfrac{\Gamma_{1}}{\Gamma}-\dfrac{\Gamma_{1}+\Gamma_{2}}{2\Gamma},\dfrac{\sqrt{3}}{2\Gamma}\left(\Gamma_{2}-\Gamma_{3}\right)\right)\\
  =\left(-\dfrac{1}{2}-\dfrac{\Gamma_{1}}{\Gamma}+\dfrac{\Gamma_{1}+\Gamma_{2}}{2\Gamma},\dfrac{\sqrt{3}}{2}-\dfrac{\sqrt{3}}{2\Gamma}\left(\Gamma_{2}-\Gamma_{3}\right)\right)\\
  =\left(-\dfrac{3\Gamma_{1}}{2\Gamma},\dfrac{\sqrt{3}}{2\Gamma}\left(\Gamma_{1}+2\Gamma_{3}\right)\right)
\end{multline}

\begin{multline}
\vxi_{3}  =\widehat{\vxi}_{3}-\widehat{\vc}
  =\left(-\dfrac{1}{2},-\dfrac{\sqrt{3}}{2}\right)-\left(\dfrac{\Gamma_{1}}{\Gamma}-\dfrac{\Gamma_{1}+\Gamma_{2}}{2\Gamma},\dfrac{\sqrt{3}}{2\Gamma}\left(\Gamma_{2}-\Gamma_{3}\right)\right)\\
  =\left(-\dfrac{1}{2}-\frac{\Gamma_{1}}{\Gamma}+\dfrac{\Gamma_{1}+\Gamma_{2}}{2\Gamma},-\dfrac{\sqrt{3}}{2}-\dfrac{\sqrt{3}}{2\Gamma}\left(\Gamma_{2}-\Gamma_{3}\right)\right)\\
  =\left(-\dfrac{3\Gamma_{1}}{2\Gamma},-\dfrac{\sqrt{3}}{2\Gamma}\left(\Gamma_{1}+2\Gamma_{2}\right)\right).
\end{multline}
Summing up all computations we get 
\begin{multline}
\left\langle M_\Gamma \vxi,\vxi\right\rangle   =
\begin{bmatrix}
\dfrac{\Gamma_{1}}{\Gamma}-\dfrac{\Gamma_{1}+\Gamma_{2}}{2\Gamma}\\
\dfrac{\sqrt{3}}{2\Gamma}\left(\Gamma_{2}-\Gamma_{3}\right)\\
-\dfrac{3\Gamma_{1}}{2\Gamma}\\
\dfrac{\sqrt{3}}{2\Gamma}\left(\Gamma_{1}+2\Gamma_{3}\right)\\
-\dfrac{3\Gamma_{1}}{2\Gamma}\\
-\dfrac{\sqrt{3}}{2\Gamma}\left(\Gamma_{1}+2\Gamma_{2}\right)
\end{bmatrix}^{T}\cdot
\begin{bmatrix}\Gamma_{1} & 0 & 0 & 0 & 0 & 0\\
0 & \Gamma_{1} & 0 & 0 & 0 & 0\\
0 & 0 & \Gamma_{2} & 0 & 0 & 0\\
0 & 0 & 0 & \Gamma_{2} & 0 & 0\\
0 & 0 & 0 & 0 & \Gamma_{3} & 0\\
0 & 0 & 0 & 0 & 0 & \Gamma_{3}
\end{bmatrix}\cdot
\begin{bmatrix}
\dfrac{\Gamma_{1}}{\Gamma}-\dfrac{\Gamma_{1}+\Gamma_{2}}{2\Gamma}\\
\dfrac{\sqrt{3}}{2\Gamma}\left(\Gamma_{2}-\Gamma_{3}\right)\\
-\dfrac{3\Gamma_{1}}{2\Gamma}\\
\dfrac{\sqrt{3}}{2\Gamma}\left(\Gamma_{1}+2\Gamma_{3}\right)\\
-\dfrac{3\Gamma_{1}}{2\Gamma}\\
-\dfrac{\sqrt{3}}{2\Gamma}\left(\Gamma_{1}+2\Gamma_{2}\right)
\end{bmatrix}\\
 ={\dfrac{\left(3\,\Gamma_{2}+3\,\Gamma_{3}\right)^{2}\Gamma_{1}}{4{\Gamma}^{2}}}
 +
 {\dfrac{3\left(\Gamma_{2}-\Gamma_{3}\right)^{2}\Gamma_{1}}{4{\Gamma}^{2}}}
 +{\dfrac{9{\Gamma_{1}}^{2}\Gamma_{2}}{4{\Gamma}^{2}}}
 +{\dfrac{3\left(\Gamma_{1}+2\,\Gamma_{3}\right)^{2}\Gamma_{2}}{4{\Gamma}^{2}}}\\
 +{\dfrac{9{\Gamma_{1}}^{2}\Gamma_{3}}{4{\Gamma}^{2}}}
 +{\dfrac{3\left(\Gamma_{1}+2\,\Gamma_{2}\right)^{2}\Gamma_{3}}{4{\Gamma}^{2}}}\\
  =\dfrac{3}{4\Gamma^{2}}\left(3\left(\,\Gamma_{2}+\,\Gamma_{3}\right)^{2}\Gamma_{1}+\left(\Gamma_{2}-\Gamma_{3}\right)^{2}\Gamma_{1}+3{\Gamma_{1}}^{2}\Gamma_{2}+\left(\Gamma_{1}+2\,\Gamma_{3}\right)^{2}\Gamma_{2}\right.\\\left.+3{\Gamma_{1}}^{2}\Gamma_{3}+\left(\Gamma_{1}+2\,\Gamma_{2}\right)^{2}\Gamma_{3}\right)\\
  =\dfrac{3}{\Gamma^{2}}\left(\Gamma\left(\Gamma_{1}^{2}+\Gamma_{2}^{2}+\Gamma_{3}^{2}\right)+3\Gamma_{1}\Gamma_{2}\Gamma_{3}\right).
\end{multline}
Let $a\=\Gamma\left(\Gamma_{1}^{2}+\Gamma_{2}^{2}+\Gamma_{3}^{2}\right)+3\Gamma_{1}\Gamma_{2}\Gamma_{3}$,
then the signature  of the quadratic form $\left\langle M_\Gamma \vxi,\vxi\right\rangle $ coincides with that of the quadratic form  $a$.

We distinguish the following four cases.\\
\paragraph{First case.} We assume that $\Gamma_{1},\ \Gamma_{2},\ \Gamma_{3}>0$. In this case we have  $\omega>0$ and $\left\langle M_\Gamma\vxi,\vxi\right\rangle >0$ and so, according to Theorem \ref{thm:stability-Morse-index-intro}, we get that $\iMor(\widehat{A}_\Gamma)=0.$

\paragraph{Second case.} We assume that  $\Gamma_{1},\ \Gamma_{2},\ \Gamma_{3}<0$. Thus 
we have $\omega<0$ and $\left\langle M_\Gamma \vxi,\vxi\right\rangle <0$; so,  according to Theorem \ref{thm:stability-Morse-index-intro}, we get  that $\iMor(\widehat{A}_\Gamma)=0$.

\paragraph{Third case.} We assume that $\Gamma_{i}<0$ only for one index $i$. In this case,  without losing in 
 generalities, we may assume that $\Gamma_{3}<0$. Since $L>0$, then we have that 
 \[
 -\dfrac{\Gamma_{1}\Gamma_{2}}{\Gamma_{1}+\Gamma_{2}}<\Gamma_{3}<0.
 \]
We claim  that $\left\langle M_\Gamma \vxi,\vxi\right\rangle >0$. In order to  prove this , as already observed, it is enough to prove that $a>0$.  We observe that $a$ could be re-written as follows
\begin{align*}
a & =\Gamma\left(\Gamma_{1}^{2}+\Gamma_{2}^{2}+\Gamma_{3}^{2}\right)+3\Gamma_{1}\Gamma_{2}\Gamma_{3}\\
& =\left(\Gamma_{1}+\Gamma_{2}+\Gamma_{3}\right)\left(\Gamma_{1}^{2}+\Gamma_{2}^{2}+\Gamma_{3}^{2}\right)+3\Gamma_{1}\Gamma_{2}\Gamma_{3}\\
& =\Gamma_{1}^{3}\left(\left(1+\dfrac{\Gamma_{2}}{\Gamma_{1}}+\frac{\Gamma_{3}}{\Gamma_{1}}\right)\left(1+\dfrac{\Gamma_{2}^{2}}{\Gamma_{1}^{2}}+\dfrac{\Gamma_{3}^{2}}{\Gamma_{1}^{2}}\right)+3\dfrac{\Gamma_{2}\Gamma_{3}}{\Gamma_{1}^{2}}\right).
\end{align*}

We let 
\[
x=\dfrac{\Gamma_{2}}{\Gamma_{1}},\ y=\dfrac{\Gamma_{3}}{\Gamma_{1}}.
\]
Since $\Gamma_{1}>0$, Thus the signature of the quadratic form $a$ agrees with the sign of  the function $b$ defined below
\[
b(x,y)\=(1+x+y)(1+x^2+y^2)+3xy,
\]
where  $x>0,\ -\dfrac{x}{1+x}<y<0.$

Fix $x\in[0,+\infty),$ differentiating $(1+x+y)(1+x^{2}+y^{2})+3xy$
with respect to $y$, yields 
\[
3y^{2}+2y(x+1)+3x+x^{2}+1=3\left(y+\dfrac{x+1}{3}\right)^{2}+\dfrac{2x^{2}+7x+2}{3}>0.
\]
This implies that $y\mapsto b(x,y)$ is a monotone increasing function (with respect to $y$) thus the infimum is getting precisely at    $-\dfrac{x}{1+x}$. Setting $y=-\dfrac{x}{1+x}$, then we get 
\begin{multline}
\left(1+x-\dfrac{x}{1+x}\right)\left(1+x^{2}+\dfrac{x^{2}}{(1+x)^{2}}\right)-3\dfrac{x^{2}}{1+x}\\=\dfrac{1}{(1+x)^{3}}\left(x^{6}+3x^{5}+3x^{4}+x^{3}+3x^{2}+3x+1\right)>0\quad \forall\, x \in [0,+\infty).
\end{multline}
By this, we immediately get that 
%
%
for every $x\in(0,+\infty)$ and $-\dfrac{x}{1+x}<y<0$,  the function $b$  is positive or which is the same that the quadratic form $a$ is positive definite, hence  $\left\langle M_\Gamma \vxi,\vxi\right\rangle >0$.

As direct consequence of Equation \eqref{eq:segno-omega} as well as of the fact that $L >0$ and $\left\langle M_\Gamma \vxi,\vxi\right\rangle >0$, we get that $\omega >0$. So now, according to Theorem \ref{thm:stability-Morse-index-intro}, we have that $\iMor(\widehat{A}_\Gamma)=\iMor(M_\Gamma)-1=1.$

\paragraph{Fourth case.} We assume now that  $\Gamma_{i}>0$  only for one index $i$. Arguing precisely as before we get $\left\langle M_\Gamma\vxi,\vxi\right\rangle <0$. In this case, however as direct consequence of  Equation \eqref{eq:segno-omega} s well as of the fact that $L >0$ and $\left\langle M_\Gamma \vxi,\vxi\right\rangle <0$, we get that $\omega<0$. 
According to Theorem \ref{thm:stability-Morse-index-intro}, we have that $\iMor(\widehat{A}_\Gamma)=\coiMor(M_\Gamma)=2.$
\end{proof}

\subsection{The rhombus families}
From paper \cite{Rob13}, we know that there exist two families of
relative equilibria where the configuration is a rhombus. Set $\Gamma_{1}=\Gamma_{2}=1$
and $\Gamma_{3}=\Gamma_{4}=m,$ where $m\in(-1,1]$ is a parameter.
Place the vortices at $z_{1}=(1,0)$, $z_{2}=(-1,0)$, $z_{3}=(0,y)$
and $z_{4}=(0,-y)$, forming a rhombus with diagonals lying on the
coordinate axis. This configuration is a central configuration provided that 
\begin{equation}\label{eq:rhombus-the-relationship-of-y-and-m}
y^{2}=\dfrac{1}{2}\left(\beta\pm\sqrt{\beta^{2}+4m}\right),\ \ \beta=3(1-m),
\end{equation}
or, equivalently,

\begin{equation}
m=\dfrac{3y^{2}-y^{4}}{3y^{2}-1}.
\end{equation}
The angular velocity is given by 
\[
\omega=\dfrac{m^{2}+4m+1}{2(1+my^{2})}=\dfrac{1}{2}+\dfrac{2m}{y^{2}+1}.
\]
The case $m<-1$ or $m>1$ can be reduced to this setup through a rescaling
of the circulations and a relabeling of the vortices. There are two
solutions depending on the sign choice for $y^{2}$. 

Taking plus sign in Equation \eqref{eq:rhombus-the-relationship-of-y-and-m} yields a solution for $m\in(-1,1]$ that always has $\omega>0$. We will call this solution
rhombus $A$. Taking $-$ in Equation \eqref{eq:rhombus-the-relationship-of-y-and-m} yields a solution for $m\in(-1,0)$ having  $\omega>0$ for $m\in(-2+\sqrt{3},0),$
but $\omega<0$ for $m\in(-1, -2+\sqrt{3})$. We will call this solution rhombus $B$. 

In the aforementioned paper, author computed the nontrivial eigenvectors of $M_\Gamma^{-1}D^{2}H(\vxi)$; in particular he proved that they are reals for every value of $m$ and are given by 
\begin{multline}\label{eq:v_1ev_2}
\vv_{1}=\trasp{[my,0,-my,0,0,-1,0,1]}\quad  \textrm{ and } K\vv_{1} \\
\vv_{2}=\trasp{[m,0,m,0,-1,0,-1,0]}\quad \textrm{ and } K\vv_{2}.
\end{multline}

\begin{thm}\label{1.Rhombus stable}{\bf (\cite[Theorem 4.1]{Rob13})\/} Under the previous notation, the following holds.
\begin{enumerate}
\item Rhombus $A$ is linearly stable for $-2+\sqrt{3}<m\le 1$ . At $m=-2+\sqrt{3}$ the relative equilibrium is degenerate. For $-1<m<-2+\sqrt{3}$ , rhombus $A$ is unstable and the nontrivial eigenvalues consist of a real pair and a pure imaginary pair. 
\item Rhombus $B$ is always unstable. One pair of eigenvalues is always real. The other pair of eigenvalues is purely imaginary for $-1<m<m^*$ and real for $m^*<m<0$ , where $m^*$ is the only real root of the cubic $9m^3+3m^2+7m+5$. At $m=m^*$, rhombus $B$ is degenerate.
\end{enumerate}
\end{thm}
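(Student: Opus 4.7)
The plan is to exploit the spectral reduction of Theorem \ref{thm:generalization-3-1-rob-intro} to convert the dynamical question about $B$ into an elementary eigenvalue computation on the four-dimensional complement of the trivial subspace $\mathrm{span}\{\vs, K\vs, \vxi, K\vxi\}$ of $\R^8$. I would first verify, by direct substitution into the block expression \eqref{eq:D2H}--\eqref{eq:Aij} using the rhombus coordinates $z_1, z_2, z_3, z_4$, that the vectors $\vv_1, K\vv_1, \vv_2, K\vv_2$ from Equation \eqref{eq:v_1ev_2} are indeed eigenvectors of $M_\Gamma^{-1}D^2H(\vxi)$, with real eigenvalues $\pm\mu_1(m), \pm\mu_2(m)$ that can be written as explicit rational functions of $m$ after eliminating $y$ via the central-configuration relation \eqref{eq:rhombus-the-relationship-of-y-and-m}. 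Since these eigenvectors, together with the four trivial ones, span all of $\R^8$, Theorem \ref{thm:generalization-3-1-rob-intro} then reduces spectral stability to the purely algebraic condition $|\mu_i(m)| \le |\omega(m)|$ for $i=1,2$, with strict inequality corresponding to non-degeneracy.

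Next I would carry out the sign analysis of $\mu_i^2(m) - \omega^2(m)$ on each branch of Equation \eqref{eq:rhombus-the-relationship-of-y-and-m} separately. For rhombus $A$ (the $+$ branch), the expectation is that $\mu_1^2 - \omega^2$ and $\mu_2^2 - \omega^2$ are both negative on $(-2+\sqrt{3}, 1]$, that exactly one of them vanishes at $m = -2+\sqrt{3}$, and that one becomes strictly positive on $(-1, -2+\sqrt{3})$ while the other stays negative, producing the announced real-plus-imaginary pair of nontrivial eigenvalues of $B$ via the correspondence $\lambda^2 = \mu_i^2 - \omega^2$ from Lemma \ref{thm:spectrum-B-Hessian}. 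For rhombus $B$ (the $-$ branch), one of $\mu_i^2 - \omega^2$ is expected to be strictly positive on all of $(-1,0)$, accounting for the always-real pair; the second should vanish precisely when the numerator simplifies to $9m^3+3m^2+7m+5$, a cubic whose discriminant or direct evaluation at the endpoints (for instance $9(-1)^3+3-7+5=-8<0$ against a positive value at $m=-2+\sqrt{3}$) exhibits a unique real root $m^\ast \in (-1,-2+\sqrt{3})$.

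To upgrade spectral stability to linear stability on the open rhombus $A$ interval $(-2+\sqrt{3}, 1)$, I would combine Lemma \ref{thm:simple-eigenvalue block}(a) with the fact that on this interval $\mu_1, \mu_2$ are real and distinct from $\pm \omega$: each of the two invariant $2\times 2$ blocks given in Equation \eqref{eq:mat1} has simple purely imaginary eigenvalues $\pm i\sqrt{\omega^2 - \mu_i^2}$ and is therefore $\C$-diagonalizable, which places $B|_{W^\perp}$ in the block-diagonal Jordan form required by Definition \ref{def:stability}. The endpoint $m=1$ (the square configuration, where the two $\mu_i^2$ might merge) is handled by a direct inspection to confirm diagonalizability. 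Conversely, at $m=-2+\sqrt{3}$ (rhombus $A$) and at $m=m^\ast$ (rhombus $B$), the corresponding $2\times 2$ block develops a repeated zero eigenvalue but a one-dimensional eigenspace, producing the claimed degeneracy.

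The main obstacle is purely algebraic: simplifying $\mu_i^2(m) - \omega^2(m)$ along each branch of \eqref{eq:rhombus-the-relationship-of-y-and-m} to a form in which the signs are transparent and in which the critical cubic $9m^3+3m^2+7m+5$ is recognized. All the conceptual machinery---the spectral correspondence of Theorem \ref{thm:generalization-3-1-rob-intro}, the $K$-pairing of eigenvectors from Lemma \ref{thm:simple-eigenvalue block-1}, and the canonical $2\times 2$ block form of Lemma \ref{thm:simple-eigenvalue block}(a)---has already been developed; everything else is a finite, if tedious, computation on a pair of $4\times 4$ symbolic matrices parametrized by $m$.
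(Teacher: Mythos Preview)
The paper does not actually prove this theorem: it is stated with attribution to \cite[Theorem~4.1]{Rob13} and is simply quoted as input for the subsequent Morse-index computation. So there is no ``paper's own proof'' to compare your proposal against.

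That said, your outline is essentially how Roberts proves it, and it is correct in spirit. The eigenvectors $\vv_1,\vv_2$ in \eqref{eq:v_1ev_2} and the explicit formulas for $\mu_1,\mu_2$ (recorded later in the paper as \eqref{eq:mu1-mu2}) are exactly the data Roberts computes; the spectral correspondence you invoke (your appeal to Theorem~\ref{thm:generalization-3-1-rob-intro} and Lemma~\ref{thm:spectrum-B-Hessian}) is the same reduction he uses, since these results generalize Roberts' own Lemmas~2.4--2.5 and Theorem~3.1. One minor refinement: Roberts carries out the sign analysis of $\mu_i^2-\omega^2$ in the variable $y$ rather than by first eliminating $y$ in favor of $m$, which keeps the expressions rational in $y$ and avoids the branch-dependent square root in \eqref{eq:rhombus-the-relationship-of-y-and-m}; the cubic $9m^3+3m^2+7m+5$ then emerges after resubstituting $m=(3y^2-y^4)/(3y^2-1)$. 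Your plan to work directly in $m$ is equivalent but algebraically heavier.
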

As consequence of Theorem \ref{1.Rhombus stable} and   Theorem \ref{thm:stability-Morse-index-intro},  we get information on the Morse index of the rhombi configurations.
\begin{thm}
We assume that $\vz$ is the relative equilibrium generated by the rhombus  central configuration $\vxi$. Then 
\begin{enumerate}
\item if the central configuration corresponds to   rhombus $A$, then we have 
\[
\iMor(\widehat{A}_\Gamma(\vxi))=\begin{cases}
0 & \textrm{ if }\quad 0<m\le1,\\ 
3 & \textrm{ if }\quad -2+\sqrt{3}<m<0,\\
4 & \textrm{ if }\quad -1<m<-2+\sqrt{3}.
\end{cases}
\]
\item if the central configuration corresponds to  rhombus $B$, then we have 
\[
\iMor(\widehat{A}_\Gamma(\vxi))=
\begin{cases}
2 & \textrm{ if }\quad -2+\sqrt{3}<m<0,\\
4 & \text{ if }\quad m^{*}<m<-2+\sqrt{3},\\
3 & \textrm{ if }\quad -1<m<m^{*}.
\end{cases}
\]
where $m^*$ is the only real root of the cubic $9m^3+3m^2+7m+5$.
\end{enumerate}
\end{thm}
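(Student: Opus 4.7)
The plan is a uniform subspace decomposition applicable in both the spectrally stable and the unstable sub-ranges. The symmetries of the rhombus provide four explicit real eigenvectors of $M_\Gamma^{-1}D^2H(\vxi)$: the vector $\vxi$ itself (with eigenvalue $\omega$, by \eqref{eq:impRob}); the translation direction $\vs=[1,0,1,0,1,0,1,0]^T$ (with eigenvalue $0$, by Lemma \ref{thm:invariant-subspaces}); and the two nontrivial eigenvectors $\vv_1,\vv_2$ of \eqref{eq:v_1ev_2}, with real eigenvalues $\mu_1,\mu_2$ computable from Roberts' eigenvalues via $\mu_i^2=\lambda_i^2+\omega^2$. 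Provided these four eigenvalues are pairwise distinct (which fails only at the excluded transition points $m=0,\,-2+\sqrt 3,\,m^*$), Lemma \ref{thm:simple-eigenvalue block} and Corollary \ref{eq:orthogonal} together guarantee that the four two-planes $\textrm{span}(\vxi,K\vxi)$, $\textrm{span}(\vs,K\vs)$, $\textrm{span}(\vv_1,K\vv_1)$, $\textrm{span}(\vv_2,K\vv_2)$ are pairwise $M_\Gamma$-orthogonal and each is $\widehat{A}_\Gamma$-invariant. They span the full phase space $\R^8$, so by additivity of the inertia index across this orthogonal decomposition (in the spirit of \eqref{eq:morse}), $\iMor(\widehat{A}_\Gamma(\vxi))$ is the sum of four blockwise Morse indices.

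Each block is straightforward. Using $\widehat{A}_\Gamma \vv=(\mu+\omega)M_\Gamma\vv$ and $\widehat{A}_\Gamma K\vv=(\omega-\mu)M_\Gamma K\vv$ together with $\langle M_\Gamma\vv,K\vv\rangle=0$ and $K^\top M_\Gamma K=M_\Gamma$, the restriction of $\widehat{A}_\Gamma$ to $\textrm{span}(\vv,K\vv)$ is represented in that basis by the diagonal matrix $\langle M_\Gamma\vv,\vv\rangle\cdot\textrm{diag}(\mu+\omega,\omega-\mu)$. Specializing: on $\textrm{span}(\vxi,K\vxi)$ the eigenvalue is $\mu=\omega$, producing $(2\omega\langle M_\Gamma\vxi,\vxi\rangle,0)$ and hence a Morse contribution of $1$ iff $2\omega\langle M_\Gamma\vxi,\vxi\rangle<0$; on $\textrm{span}(\vs,K\vs)$ the eigenvalue is $\mu=0$, producing $\omega\Gamma\,\Id_2$ and a contribution of $2$ iff $\omega\Gamma<0$; on $\textrm{span}(\vv_i,K\vv_i)$ the contribution is $0,1,$ or $2$ according to how many of $\langle M_\Gamma\vv_i,\vv_i\rangle(\mu_i+\omega)$ and $\langle M_\Gamma\vv_i,\vv_i\rangle(\omega-\mu_i)$ are strictly negative.

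The arithmetic inputs required are: the identities $\langle M_\Gamma\vxi,\vxi\rangle=2(1+my^2)$, $\langle M_\Gamma\vs,\vs\rangle=2(1+m)=\Gamma$, and (by a direct calculation from \eqref{eq:v_1ev_2}) $\langle M_\Gamma\vv_1,\vv_1\rangle=2m(1+my^2)$ with the analogous expression for $\vv_2$; the explicit formula for $\omega$ in terms of $m$ on each rhombus branch; and, via Lemma \ref{thm:spectrum-B-Hessian}, the comparison of $|\mu_i|$ with $|\omega|$ read off from Roberts' Theorem \ref{1.Rhombus stable}, recalling that a real pair of $\lambda_i$'s corresponds to $|\mu_i|>|\omega|$ (opposite signs of $\mu_i+\omega$ and $\omega-\mu_i$) while an imaginary pair corresponds to $|\mu_i|<|\omega|$ (same signs). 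These data fix every required sign, and a direct tabulation in each sub-range of $m$ gives the stated Morse indices.

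The principal obstacle is the sign bookkeeping. The discriminant $\sqrt{\beta^2+4m}$ distinguishes the rhombus $A$ and $B$ branches, and the sign of $1+my^2$ (which controls both $\omega$ and $\langle M_\Gamma\vxi,\vxi\rangle$) changes as $m$ crosses $-2+\sqrt 3$ on each branch, causing a simultaneous flip in several blockwise contributions. Moreover at $m=m^*$ on rhombus $B$ an imaginary pair of $B$-eigenvalues collides on the real axis, so one $|\mu_i|$ passes through $|\omega|$; this flips the sign pattern of the corresponding $V_i$-block by one and accounts for the jump from Morse index $3$ to $4$ as $m$ crosses $m^*$ from below. Careful case-by-case accounting is unavoidable, but once carried out the stated values drop out of the blockwise formula above.
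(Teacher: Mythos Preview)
Your approach is essentially the same as the paper's. Both proofs rest on the $M_\Gamma$-orthogonal decomposition of $\R^8$ into the four two--planes $\mathrm{span}(\vxi,K\vxi)$, $\mathrm{span}(\vs,K\vs)$, $\mathrm{span}(\vv_1,K\vv_1)$, $\mathrm{span}(\vv_2,K\vv_2)$ (equivalently into the one--dimensional $A_\Gamma$--eigenspaces $I_0,I_{2\omega},I_\omega,I_{\omega\pm\mu_1},I_{\omega\pm\mu_2}$), followed by a sign analysis of each diagonal block using Roberts' determination of which $|\mu_i|$ exceed $|\omega|$. Your identities $\langle M_\Gamma\vxi,\vxi\rangle=2(1+my^2)$, $\langle M_\Gamma\vs,\vs\rangle=2(1+m)$, $\langle M_\Gamma\vv_1,\vv_1\rangle=2m(1+my^2)$, $\langle M_\Gamma\vv_2,\vv_2\rangle=2m(m+1)$ and the diagonal form $\langle M_\Gamma\vv,\vv\rangle\,\mathrm{diag}(\mu+\omega,\omega-\mu)$ are exactly what the paper computes in its displayed equations.

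The one organizational difference is that for the linearly stable sub-ranges of rhombus $A$ (namely $-2+\sqrt3<m\le 1$) the paper does not redo the blockwise count but instead invokes Theorem~\ref{thm:stability-Morse-index-intro} directly, reading off $\iMor(\widehat A_\Gamma)$ from $\iMor(M_\Gamma)$ and the sign of $\langle M_\Gamma\vxi,\vxi\rangle$; for all unstable ranges (rhombus $A$ with $-1<m<-2+\sqrt3$, and all of rhombus $B$) the paper carries out precisely the block-by-block bookkeeping you describe. Your uniform blockwise treatment is equally valid and arguably cleaner, since the relation $2\omega\langle M_\Gamma\vxi,\vxi\rangle=2(m^2+4m+1)$ makes the $\vxi$--block contribution transparent without the auxiliary computation of the function $r(m)$ that the paper performs.
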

\begin{proof}
In order to prove the first claim, we observe that, by the computation performed by author in \cite[pag. 1129]{Rob13} as well as consequence of Lemma \ref{thm:invariant-subspaces} and Lemma \ref{thm:simple-eigenvalue block},  it follows that the spectrum  of the matrix $A_\Gamma$ is given by 
\begin{multline}\label{eq:mu1-mu2}
\sigma(A_\Gamma)=\Set{0, 2\omega, \omega,\omega-\mu_1, \omega+\mu_1, \omega-\mu_2, \omega+\mu_2} \quad \textrm{ where } 
\\ \mu_1=\dfrac{7y^4-18y^2+7}{2(y^2+1)(3y^2-1)} \textrm{ and } \mu_2=\dfrac{2(m+1)(1-y^2)}{(1+y^2)^2}=\dfrac{2(y^2-1)(y^2+2y-1)(y^2-2y-1)}{(y^2+1)^2(3y^2-1)}
\end{multline}
where the algebraic multiplicity of  $\omega$ is two.
Moreover 
\begin{multline}
	 A_{\Gamma}\vxi=0,\quad 
 A_{\Gamma}K\vxi=2\omega K\vxi,\quad  
 A_{\Gamma}\vs=\omega \vs,\quad 
 A_{\Gamma}K\vs=\omega \vs,\\ 
 A_{\Gamma}\vv_{1}=(\mu_{1}+\omega)\vv_{1},\quad 
 A_{\Gamma}K\vv_{1}=(-\mu_{1}+\omega)K\vv_{1},\\ 
 A_{\Gamma}\vv_{2}=(\mu_{2}+\omega)\vv_{2},\quad
 A_{\Gamma}K\vv_{2}=(-\mu_{2}+\omega)K\vv_{2}
\end{multline}
where $\vs$ is the vector defined in Lemma \ref{thm:invariant-subspaces}. Thus, by using Lemma \ref{thm:orthogonal-1}, we get the following  $M_\Gamma$-orthogonal direct sum decomposition  
\[
\C^{8}=I_{0}\oplus I_{2\omega}\oplus I_{\omega}\oplus I_{-\mu_{1}+\omega}\oplus I_{\mu_{1}+\omega}\oplus I_{\mu_{2}+\omega}\oplus I_{-\mu_{2}+\omega},
\]
 where 
 \begin{multline}
 I_{0}=\text{span}\{\vxi \},\ I_{2\omega}=\text{span}\{K\vxi\} ,\ I_{\omega}=\text{span}\{\vs,K\vs \},\ I_{\omega+\mu_1}=\text{span}\{\vv_1 \},\\ I_{\omega-\mu_1}=\text{span}\{K\vv_1 \},\ I_{\omega+\mu_2}=\text{span}\{\vv_2 \},\ I_{\omega-\mu_1}=\text{span}\{K\vv_2 \}.
 \end{multline}
 By a straightforward calculations, we get that 
\begin{multline}\label{eq:Rhombus-general-eigenvector-subspace}
\left.\widehat{A_\Gamma}\right|_{I_{0}}[\vxi]  =\left\langle M_{\Gamma}A_{\Gamma}\vxi,\vxi\right\rangle=0, \quad \left.\widehat{A_\Gamma}\right|_{I_{2\omega}}[K\vxi]  =\left\langle M_{\Gamma}A_{\Gamma}K\vxi,K\vxi\right\rangle =2\omega\left\langle M_{\Gamma}\vxi,\vxi\right\rangle,\\
\left.\widehat{A_\Gamma}\right|_{I_{\omega}}[\vs, K\vs]  =\begin{bmatrix}\left\langle M_{\Gamma}A_{\Gamma}\vs,\vs\right\rangle & \left\langle M_{\Gamma}A_{\Gamma}\vs,K\vs\right\rangle\\
\left\langle M_{\Gamma}A_{\Gamma}K\vs,\vs\right\rangle & \left\rangle M_{\Gamma}A_{\Gamma}K\vs,K\vs\right\rangle
\end{bmatrix}=\begin{bmatrix}\omega\left\langle M_{\Gamma}\vs,\vs\right) & \omega\left(M_{\Gamma}\vs,K\vs\right\rangle\\
\omega\left(M_{\Gamma}K\vs,\vs\right) & \omega\left\langle M_{\Gamma}\vs,\vs\right\rangle 
\end{bmatrix}\\  =\omega\begin{bmatrix}2+m & 0\\
0 & 2+m
\end{bmatrix},\quad 
\left.\widehat{A_\Gamma}\right|_{I_{\omega+\mu_{1}}}[\vv_1] =\left\langle M_{\Gamma}A_{\Gamma}\vv_{1},\vv_{1}\right\rangle=(\omega+\mu_{1})\left\langle M_{\Gamma}\vv_{1},\vv_{1}\right\rangle, \\
\left.\widehat{A_\Gamma}\right|_{I_{\omega-\mu_{1}}}[\vv_1] =\left\langle M_{\Gamma}A_{\Gamma}K\vv_{1},K\vv_{1}\right\rangle =(\omega-\mu_{1})\left\langle M_{\Gamma}\vv_{1},\vv_{1}\right\rangle,\\
\left.\widehat{A_\Gamma}\right|_{I_{\omega+\mu_{2}}}[\vv_2]  =\left\langle M_{\Gamma}A_{\Gamma}\vv_{2},\vv_{2}\right\rangle =(\omega+\mu_{2})\left\langle M_{\Gamma}\vv_{2},v\right\rangle,\\
\left.\widehat{A_\Gamma}\right|_{I_{\omega-\mu_{2}}}[\vv_2] =\langle M_{\Gamma}A_{\Gamma}K\vv_2,K\vv_{2}\rangle )=(\omega-\mu_{2})\left\langle M_{\Gamma}\vv_{2},\vv_{2}\right\rangle.
\end{multline}
By the rhombi classifications, we distinguish the two cases. 
\paragraph{Rhombus A central configuration.} We assume that the central configuration corresponds to rhombus $A$. Then, as already observed, we have
\[
y^{2}=\dfrac{1}{2}\left(3\left(1-m\right)+\sqrt{9\left(1-m\right)^{2}+4m}\right)
\]
and $\omega$ is always positive for all $m\in(-1,1]$. Since $M_\Gamma$ is the diagonal block matrix given by $M_\Gamma=\diag(\Id_4,m\,\Id_4 )$ and the central configuration $\vxi= \trasp{[1,0,-1-0,0,y,0,-y]}$, we immediately get that 
\begin{equation}\label{eq:mcsi-csi-romboa}
	\left\langle M_\Gamma\vxi,\vxi\right\rangle= 3m-3m^{2}+m\sqrt{9\left(1-m\right)^{2}+4m}+2
\end{equation}
%
%
%
%
Define the function $r(m)\=3m-3m^{2}+m\sqrt{9\left(1-m\right)^{2}+4m}+2$ and in order to find the root of the equation $r(m)=0,$ we first compute the roots of the equation 
\[
\left(3m-3m^{2}+2  \right)^2-\left(m\sqrt{9\left(1-m\right)^{2}+4m}  \right)^2=0.
\]
 By a simple calculation we get s that 
		\begin{multline}
		 \left(3m-3m^{2}+2\right)^{2}-\left(m\sqrt{9\left(1-m\right)^{2}+4m}\right)^{2}\\
		=  9m^{4}-18m^{3}-3m^{2}+12m+4
		 -\left(9m^{4}-14m^{3}+9m^{2}\right)\\
		=  -4m^{3}-12m^{3}+12m^{2}+4
		=  -4\left(m-1\right)\left(m+2+\sqrt{3}\right)\left(m+2-\sqrt{3}\right).
		\end{multline}
		This implies that $\left(3m-3m^{2}+2  \right)^2-\left(m\sqrt{9\left(1-m\right)^{2}+4m}  \right)^2=0$ has three real roots given by $-2-\sqrt{3}$, $-2+\sqrt{3}$ and $1$. Moreover, $r(1)=4>0,r(-2+\sqrt{3})=0$ and $r(2+\sqrt{3})=-50-30\sqrt{3}<0,$ and then the function $m \mapsto r(m)$ has the unique root given by $-2+\sqrt{3}.$     
 So we have that
\begin{equation}\label{eq:rhombus A z0Mz0}
\begin{cases}
\left\langle M_\Gamma\vxi,\vxi\right\rangle > 0, & \textrm{ if }\ -2+\sqrt{3}<m\le1\\
\left\langle M_\Gamma\vxi,\vxi\right\rangle <0, & \textrm{ if }-2-\sqrt 3<m<-2+\sqrt{3}.
\end{cases}
\end{equation}
\begin{itemize}
	\item {\bf Linearly stable relative equilibria.\/} 	If $-2+\sqrt{3}<m\leq 1$ directly by  Theorem \ref{1.Rhombus stable}  we get that  $\vz$ is linearly stable.	So by Theorem \ref{thm:stability-Morse-index-intro}, by Equation \eqref{eq:rhombus A z0Mz0} and by remembering that  fin this case the angular velocity $\omega$ is always  positive we conclude that

	 \begin{equation}
		\iMor(\widehat A_\Gamma)=\begin{cases}
		n_-\left(M_\Gamma \right)=0 & \textrm{ if } \ 0<m\leq 1\\
		n_-\left(M_\Gamma \right)-1=3 & \textrm{if }\ -2+\sqrt{3}<m<0.
		\end{cases}
		\end{equation}  
\item{\bf Unstable relative equilibria.\/} By invoking once again  the first claim in  Theorem \ref{1.Rhombus stable}, we know that if  $-1<m<-2+\sqrt{3}$, then the relative equilibrium 
$\vz$ is unstable and the nontrivial eigenvalues consist of a 
pair of real and a pair of purely imaginary eigenvalues. By Lemma \ref{thm:spectrum-B-Hessian} it follows that the non-trivial eigenvalues of $B$ are $\pm\sqrt{-\omega^2+\mu^2_1 }$ and $\pm\sqrt{-\omega^2+\mu^2_2 }$. Moreover, for  $-1<m<-2+\sqrt{3}$, author in \cite{Rob13} proved that  $ \sqrt{-\omega^2+\mu^2_1 }\in \R $  and $\sqrt{-\omega^2+\mu^2_2 } \in i \R$. Summing up, we get  that $|\mu_1|>\omega $ whilst $|\mu_2| <\omega$.	

Being $	|\mu_{1}|>\omega,$ we have that $(\omega+\mu_1)(\omega-\mu_1)<0$ for $m\in(-1,-2+\sqrt{3})$ which means that the factors $\omega \pm  \mu_1$ have opposite signs.
By Equation \eqref{eq:Rhombus-general-eigenvector-subspace}
\[
\left.\widehat{A_\Gamma}\right|_{I_{\omega+\mu_{1}}}[\vv_1]=(\omega+\mu_{1})\left\langle M_\Gamma \vv_1,\vv_1\right\rangle \quad \textrm{ and  }\quad
\left.\widehat{A_\Gamma}\right|_{I_{\omega-\mu_{1}}}[\vv_1]=(\omega-\mu_{1})\left\langle M_\Gamma \vv_1,\vv_1\right\rangle.
\]
Each of the  forms $\left.\widehat{A_\Gamma}\right|_{I_{\omega\pm\mu_{1}}}$ is a quadratic form onto a one-dimensional space and by the previous discussion on signs, we get that  $\iMor\left(\left.\widehat{A_\Gamma}\right|_{I_{\omega+\mu_{1}}}\right)+\iMor\left(\left.\widehat{A_\Gamma}\right|_{I_{\omega-\mu_{1}}}\right)=1$  for $m\in(-1,-2+\sqrt{3})$.
By a straightforward calculation, we also get that $\left\langle M_\Gamma\vv_{2},\vv_{2}\right\rangle  = 2m^{2}+2m$ where $\vv_2$ was  given in Equation \eqref{eq:v_1ev_2}. We observe that $\left\langle M_\Gamma\vv_{2},\vv_{2}\right\rangle$ is negative  for $m \in (-1,0)$ and a fortiori for $m \in (-1, -2+\sqrt 3)$. Now since $|\mu_2| <\omega$, this  implies that $(\omega+\mu_2)(\omega-\mu_2)>0$.  However, by the definition of $\mu_2$ given in Equation \eqref{eq:mu1-mu2}, we infer that $\mu_2<0$ and in particular being $\omega >0$ we get that $\omega -\mu_2 >0$. Thus by the product rule it then follows that also $\omega +\mu_2 >0$
Since, 
\[
\left.\widehat{A_\Gamma}\right|_{I_{\omega+\mu_{2}}}[\vv_2]=(\omega+\mu_{2})\left\langle M_\Gamma \vv_2,\vv_2\right\rangle \quad \textrm{ and  }\quad
\left.\widehat{A_\Gamma}\right|_{I_{\omega-\mu_{2}}}[\vv_2]=(\omega-\mu_{2})\left\langle M_\Gamma \vv_2,\vv_2\right\rangle.
\]
we finally get both quadratic forms (on the one-dimensional subspace generated by $\vv_2$) are negative definite and hence each gives a $1$ contribution to the Morse index. In conclusion, we get 
\begin{align*}
\iMor\left(\widehat{A}_\Gamma\right) & =\iMor\left(\left.\widehat{A_\Gamma}\right|_{I_{0}}\right)+\iMor\left(\left.\widehat{A_\Gamma}\right|_{I_{2\omega}}\right)+\iMor\left(\left.\widehat{A_\Gamma}\right|_{I_{\omega}}\right)+\iMor\left(\left.\widehat{A_\Gamma}\right|_{I_{\omega+\mu_{1}}}\right)\\
& \ +\iMor\left(\left.\widehat{A}\right|_{I_{\omega-\mu_{1}}}\right)+\iMor\left(\left.\widehat{A}\right|_{I_{\omega+\mu_{2}}}\right)+\iMor\left(\left.\widehat{A}\right|_{I_{\omega-\mu_{2}}}\right)\\
& =0+1+0+1+1+1\\
& =4.
\end{align*}
\end{itemize}

\paragraph{Rhombus B central configuration.} We assume now that the relative equilibrium  $\vz_{0}$ comes out from rhombus B central configuration. Once again by using Theorem
\ref{1.Rhombus stable},  it follows that $\vz_{0}$ is always unstable for $m\in(-1,0),$
moreover, we know that 
\begin{equation}\label{eq:change-sign-omega}
\begin{cases}
\omega >0 & \textrm { if } \quad  m \in (-2+\sqrt{3},0)\\
\omega<0 &  \textrm{ if }   \quad m \in (-1,-2+\sqrt{3}).
\end{cases}
\end{equation}
and one pair of eigenvalues is always real whilst the other  is purely imaginary for $-1<m<m^{*}$ and real
for $m^{*}<m<0$, where $m^{*}\approx-0.5951$ is the only real root
of the cubic $9m^{3}+3m^{2}+7m+5$. At $m=m^{*}$, rhombus $B$ is
degenerate. 

By invoking  Lemma \ref{thm:spectrum-B-Hessian}, the non-trivial eigenvalues of $B$ are $\pm\sqrt{-\omega^2+\mu^2_1 }$ and $\pm\sqrt{-\omega^2+\mu^2_2 }$. Furthermore by \cite[pag. 1129]{Rob13}, we know also that 	
	\begin{equation}
	\begin{cases}
	\sqrt{-\omega^{2}+\mu_{1}^{2}}&\textrm{ is real if } m\in(-1,0),\\
	\sqrt{-\omega^{2}+\mu_{2}^{2}}& \textrm{ is real if } m\in(m^{*},0),\\
	\sqrt{-\omega^{2}+\mu_{2}^{2}}&\textrm{ is purely imaginary if }m\in(-1,m^{*}).
	\end{cases}
	\end{equation}
	Thus we get  
	\begin{equation}
		\begin{cases}
	|\mu_{1}|>|\omega|, & \text{if }m\in(-1,0),\\
	|\mu_{2}|>|\omega|, & \text{if }m\in(m^{*},0),\\
	|\mu_{2}|<|\omega|, & \text{if }m\in(-1,m^{*}).
	\end{cases}\label{eq:rhombus B the internal for non-trivial eigenvalues}
	\end{equation}

		 Being $	|\mu_{1}|>|\omega|,$ we have that $(\omega+\mu_1)(\omega-\mu_1)<0$ for $m\in(-1,0)$.
	By Equation \eqref{eq:Rhombus-general-eigenvector-subspace}
	\[
	\left.\widehat{A_\Gamma}\right|_{I_{\omega+\mu_{1}}}[\vv_1]=(\omega+\mu_{1})\left\langle M_\Gamma \vv_1,\vv_1\right\rangle \quad \textrm{ and  }\quad
	\left.\widehat{A_\Gamma}\right|_{I_{\omega-\mu_{1}}}[\vv_1]=(\omega-\mu_{1})\left\langle M_\Gamma \vv_1,\vv_1\right\rangle
	\]
	 then we can conclude  that  $\iMor\left(\left.\widehat{A_\Gamma}\right|_{I_{\omega+\mu_{1}}}\right)+\iMor\left(\left.\widehat{A_\Gamma}\right|_{I_{\omega-\mu_{1}}}\right)=1$.
		By the very some argument, we infer also that $\iMor\left(\left.\widehat{A_\Gamma}\right|_{I_{\omega+\mu_{2}}}\right)+\iMor\left(\left.\widehat{A_\Gamma}\right|_{I_{\omega-\mu_{2}}}\right)=1 $ for $m\in(m^*,0)$. Next, we will compute the sign of 	$
		\left.	\widehat{A_\Gamma}\right|_{I_{\omega\pm\mu_{2}}} $ for $m\in(-1,m^*)$.    Since $\left\langle M_\Gamma\vv_{2},\vv_{2}\right\rangle =2m^{2}+2m $ which is negative in $(-1,0)$, a fortiori it will be negative in for $m\in(-1,m^*)$. Since $-2+\sqrt{3}\approx-0.2679>-0.5951\approx m^*$,   then  $\omega<0$ for $(-1,m^*)$. Now, since $-\omega^2+\mu_2^2 <0$ we get that   $(\mu_2-\omega)(\mu_2+\omega)<0$ and since $\mu_2 +\omega<0$ (in fact $\omega$ is negative for $m \in (-1, m^*)$ as well as $\mu_2$), this implies that $\mu_2-\omega >0$. Thus in conclusion both eigenvalues  $\mu_2 \pm \omega$ are negative in $(-1,m^*)$.  By Equation \eqref{eq:Rhombus-general-eigenvector-subspace}, we have  that  
		 \[
	\left.\widehat{A_\Gamma}\right|_{I_{\omega+\mu_{2}}}[\vv_2]=(\omega+\mu_{2})\left\langle M_\Gamma \vv_2,\vv_2\right\rangle \quad \textrm{ and  }\quad
	\left.\widehat{A_\Gamma}\right|_{I_{\omega-\mu_{2}}}[\vv_2]=(\omega-\mu_{2})\left\langle M_\Gamma \vv_2,\vv_2\right\rangle
	\]
		 and immediately by the discussion above, we conclude that 
		 \[
\iMor\left(\left.\widehat{A_\Gamma}\right|_{I_{\omega+\mu_{2}}} \right)=n_-\left(\left.\widehat{A_\Gamma}\right|_{I_{\omega-\mu_{2}}} \right)=0 \quad \textrm{ for }\quad m \in (-1, m^*).
\]
Invoking once again Equation \eqref{eq:Rhombus-general-eigenvector-subspace}, we get that $\left.\widehat{A_\Gamma}\right|_{I_{2\omega}}[K\vxi]=2\omega\left\langle M_\Gamma K\vxi,\vxi \right\rangle$. Summing up Equation (\ref{eq:rhombus A z0Mz0}) and Equation \eqref{eq:change-sign-omega},  then we get
\[
\iMor\left(\left.\widehat{A_\Gamma}\right|_{I_{2\omega}} \right)=0\ 
\textrm{for all }\ m\in(-1,-2+\sqrt{3})\cup(-2+\sqrt{3},0) 
\] 
By Equation (\ref{eq:Rhombus-general-eigenvector-subspace}), we get finally that  
\[
	\iMor\left(\widehat{A_\Gamma}\right)=\begin{cases}
	1+1=2 & \textrm{ if }\quad -2+\sqrt{3}<m<0,\\
	2+1+1=4 & \textrm{ if }\quad m^{*}<m<-2+\sqrt{3},\\
	2+1=3 & \textrm{ if }\quad -1<m<m^{*}.
	\end{cases}
	\] 
This concludes the proof. 
\end{proof}


\vspace{3cm}
\noindent
\textsc{Prof. Xijun Hu}\\
Department of Mathematics\\
Shandong University\\
Jinan, Shandong, 250100\\
The People's Republic of China \\
China\\
E-mail:\email{xjhu@sdu.edu.cn}

\vspace{1cm}
\noindent
\textsc{Prof. Alessandro Portaluri}\\
DISAFA\\
Università  degli Studi di Torino\\
Largo Paolo Braccini 2 \\
10095 Grugliasco, Torino\\
Italy\\
Website: \url{aportaluri.wordpress.com}\\
E-mail: \email{alessandro.portaluri@unito.it}

\vspace{1cm}
\noindent
\textsc{Dr. Qin Xing}\\
Department of Mathematics\\
Shandong University\\
Jinan, Shandong, 250100\\
The People's Republic of China \\
China\\
E-mail:\email{qinxingly@gmail.com}

\end{document}